\newtheorem{thm}{Theorem}[section]
\newtheorem{cor}[thm]{Corollary}
\newtheorem{lem}[thm]{Lemma}
\newtheorem{prop}[thm]{Proposition}
\theoremstyle{definition}
\newtheorem{defin}[thm]{Definition}
\Crefname{defin}{Definition}{Definitions}
\Crefname{rem}{Remark}{Remarks}
\Crefname{thm}{Theorem}{Theorems}
\Crefname{prop}{Proposition}{Propositions}
\crefname{defin}{def.}{def.}
\Crefname{lem}{Lemma}{Lemmas}
\newtheorem{rem}[thm]{Remark}
\newcommand\restr[2]{{
    \left.\kern-\nulldelimiterspace 
    #1 
    \vphantom{\big|} 
    \right|_{#2} 
}}
\newcommand{\N}{\ensuremath{\mathbb{N}}}
\newcommand{\NN}{\ensuremath{\mathbb{N}_0}}
\newcommand{\T}{\ensuremath{\mathbb{T}}}
\renewcommand{\S}{\ensuremath{\mathbb{S}}}
\newcommand{\Z}{\ensuremath{\mathbb{Z}}}
\newcommand{\R}{\ensuremath{\mathbb{R}}}
\newcommand{\C}{\ensuremath{\mathbb{C}}}
\newcommand{\abs}[1]{\ensuremath{\left\vert#1\right\vert}}
\newcommand{\inn}[1]{\ensuremath{\left\langle#1\right\rangle}}
\newcommand{\e}{\mathrm{e}}
\renewcommand{\i}{\mathrm{i}}
\newcommand{\zb}[1]{\ensuremath{\boldsymbol{#1}}}
\renewcommand{\d}{\, \mathrm{d}}
\newcommand{\norm}[1]{\left\lVert #1
  \right\rVert}
\title{Approximation properties of the double Fourier sphere method}
\date{}
\author{%
  Sophie Mildenberger\thanks{TU Berlin, Institute of
    Mathematics, MA 4-3, Straße des 17. Juni 136, D-10623 Berlin, Germany.
  }\\%
  {\footnotesize\href{mailto:mildenberger@campus.tu-berlin.de}{mildenberger@campus.tu-berlin.de}}
  \and 
  Michael Quellmalz\footnotemark[1]\\%
  {\footnotesize\href{mailto:quellmalz@math.tu-berlin.de}{quellmalz@math.tu-berlin.de}}
}
\begin{document}

\maketitle

\begin{abstract}
  We investigate analytic properties of the double Fourier sphere (DFS) method, 
  which transforms a function defined on the two-dimensional sphere to a function defined on the two-dimensional torus.
  Then the resulting function can be written as a Fourier series yielding an approximation of the original function.
  We show that the DFS method preserves smoothness:
  it continuously maps spherical Hölder spaces into the respective spaces on the torus, 
  but it does not preserve spherical Sobolev spaces in the same manner. 
  Furthermore,
  we prove sufficient conditions for the absolute convergence of the resulting series expansion on the sphere as well as results on the speed of convergence.
  
  \medskip
  \noindent
  \textit{Math Subject Classifications.}
  42B05,  
  42C10,  
  43A90,  
  65T50  
\end{abstract}

\section{Introduction}

The problem of approximating functions defined on the two-dimensional sphere arises in many real-world applications such as weather prediction and climate modeling \cite{w1,w2,w3}. 
One approach is the grid-point method, in which functions and equations are approximated on a finite set of points \cite{grids}. 
For example, the German weather service's short forecast model uses a rotated latitude-longitude grid \cite{cosmo}. 
Another common approach to approximate spherical functions is the spectral method \cite{spectral}, where a spherical function is expanded into a series of basis functions. 
This method is especially relevant to applications involving differential equations, since 
derivatives can be evaluated exactly via the derivatives of the basis functions. 

A frequently used choice 
of basis functions for the spectral method are the {spherical harmonics} \cite{michel}. 
They are eigenfunctions of the spherical Laplacian, 
which makes them a natural choice for problems such as
solving differential equations \cite{yee}, 
spherical deconvolution \cite{HiQu15}, 
the approximation of measures \cite{EhlGraNeuSte21},
or
modeling the earth's upper mantle \cite{HiPoQu18,WoDz84} and gravitational field \cite{Ger14}.
While the naive computation of spherical harmonics expansions is very memory and time consuming \cite{yee}, 
there are methods allowing for a faster computation, known as fast spherical Fourier transforms, see, e.g., \cite{drhe,kunis,Mo99,WedHamMoz13} and \cite[sec.~9.6]{numFou}.
However, these algorithms do not reach the performance of fast Fourier transforms on the torus and they suffer from  difficulties in the numerical evaluation of associated Legendre functions, cf.\ \cite{Sch13}.

The \emph{double Fourier sphere} (DFS) method avoids numerical difficulties of spherical harmonics expansions.
The core concept is to apply a simple transformation, which is closely related to the spherical {longitude-latitude coordinates}, before any approximation steps. 
A spherical function is transformed to a biperiodic function on a rectangular domain, i.e., a function on the two-dimensional torus.
The resulting function can, in turn, be represented via a two-dimensional Fourier series, which can be efficiently approximated by a {fast Fourier transform} (FFT), cf.\ \cite{yee} and \cite[ch.~5,~7]{numFou}.

The DFS method was first proposed in 1972 by Merilees \cite{merilees} in the context of shallow water equations.
Further applications followed in meteorology \cite{boyd,orszag,yee} and geophysics \cite{fornberg}. 
In 2016, Townsend et al.\ proposed a DFS method 
for a low-rank approximation of spherical functions \cite{townsend}. 
Sampling methods on the sphere based on spherical harmonics and the DFS method were compared in \cite{potts}. 
Recently, the DFS method was utilized in the numerical solution of partial differential equations on the sphere \cite{MonNak18} and the ball \cite{BouTow20,BouSloTow21}
as well as in computational spherical harmonics analysis \cite{DraWri20}.
To the best of our knowledge, no results on the convergence of the approximation with the DFS method were published so far.

In this paper, we are concerned with analytic properties of the DFS method.
The main contributions of this work are the following: 
\begin{enumerate}
\item[(i)]
We examine which function spaces are preserved under the DFS transformation. 
In particular, we show in \Cref{thm:diffC,thm:hoelC} that the DFS method maps the differentiability space $\mathcal{C}^k(\mathbb{S}^2 )$ and the Hölder space $\mathcal{C}^{k,\alpha}(\mathbb{S}^2)$ on the sphere~$\S^2$ continuously into the respective spaces $\mathcal{C}^k(\mathbb{T}^2)$ and $\mathcal{C}^{k,\alpha}(\mathbb{T}^2)$ on the torus~$\T^2$. 
However, in \Cref{thm:count} we prove that the analogue does not hold true for spherical Sobolev spaces.
\item[(ii)]
The Fourier series that results from applying the DFS method admits a certain symmetry on the torus. 
This allows us to provide a series expansion of the spherical function in terms of basis functions that are orthogonal with respect to a weight on the sphere in \Cref{thm:F}.
We prove the absolute convergence as well as convergence rates for this expansion of Hölder continuous functions in \Cref{thm:hoelScon,thm:hoelSrate}.
\item[(iii)]
Numerical tests indicate that this series expansion provides a comparable approximation quality while being faster than a spherical harmonics expansion.
\end{enumerate}

This paper is structured as follows:
\Cref{sec:DFS} introduces the DFS method.
In \Cref{sec:hoel}, we define Hölder spaces and related function spaces on the sphere and on the torus.
We show that the DFS method preserves differentiability and Hölder spaces in \Cref{chap:hoelDFS}.
\Cref{chap:3} is dedicated to the approximation of DFS functions via the Fourier series that results from applying the DFS method. 
We study sufficient conditions for the absolute convergence of this series and present bounds on its speed of convergence. 
Numerical results are presented in \Cref{sec:numerics}.
Finally, \Cref{sec:sobolev} addresses the question whether the space preserving properties of the DFS method on Hölder spaces also hold true for Sobolev spaces. 
This question is answered negative for the Sobolev space $H^1(\S^2)$.

\section{The double Fourier sphere method}
\label{sec:DFS}

The {double Fourier sphere (DFS) method} 
transforms spherical functions into functions on the two-dimensional torus.
Let $d \in \mathbb{N}$.
We denote the {$d$-dimensional torus} by
$
    {\mathbb{T}^d := \mathbb{R}^d / (2\pi \mathbb{Z}^d)}
$
and the {$d$-dimensional unit sphere} by
\begin{equation*}
    \mathbb{S}^d:=\{\bm{x} \in \mathbb{R}^{d+1}\mid \lVert \bm{x} \rVert=1\},
\end{equation*}
where $\norm{\cdot}$ denotes the Euclidean norm.
Note that every function defined on $\mathbb{T}^d$ can be identified with a function defined on $\mathbb{R}^d$ which is $2\pi$-periodic in all dimensions. 

Then the \emph{DFS coordinate transform} given by
\begin{equation}\label{eq:DFScoord}
  \phi \colon \mathbb{T}^2 \to \mathbb{S}^2, \ \phi(\lambda, \theta) := (\cos \lambda\, \sin \theta, \sin \lambda\, \sin \theta, \cos \theta),
\end{equation}
is well-defined on $\mathbb{T}^2$, since trigonometric functions are $ 2 \pi $-periodic.
The restriction of $\phi$ to $[-\pi,\pi)\times (0,\pi) \cup \{(0,0),(0,\pi)\}$
is bijective and known as the \emph{longitude-latitude transform} of spherical coordinates. 
Any spherical function can be composed with the longitude-latitude transform resulting in a function defined on the rectangle $[-\pi,\pi] \times [0,\pi]$, which is $2\pi$-periodic in latitude direction $\lambda$, but generally not periodic in $\theta$-direction. 
The longitude-latitude transform is illustrated in \Cref{fig:LLtransform}.

\begin{figure}[ht]
    \begin{subfigure}{.3\textwidth}
        \flushright
	    \includegraphics[width=0.6\textwidth]{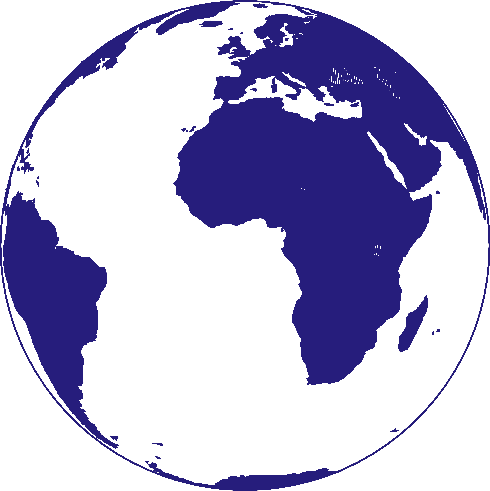}
    \end{subfigure}
    \hspace{9pt}
    {$\xrightarrow[\text{transform}]{\text{longitude-latitude}}$} \hspace{9pt}
    \begin{subfigure}{.375\textwidth}
	    \includegraphics[width=\textwidth]{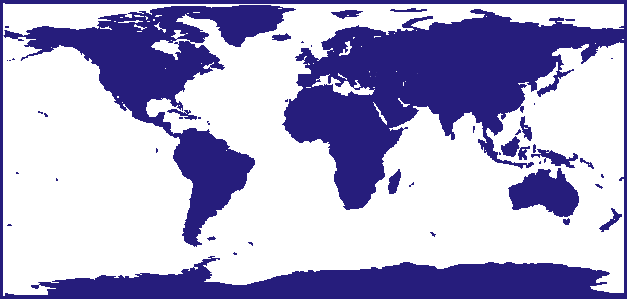}
    \end{subfigure}
	\caption{The longitude-latitude transform applied to the earth's land masses.}
	\label{fig:LLtransform}
\end{figure}

We want to approximate a spherical function $f\colon \mathbb{S}^2 \to \mathbb{C}$ using Fourier analysis on the two-dimensional torus. 
To this end, we require a transformation which yields functions defined on the torus, or equivalently $2\pi$-biperiodic functions with domain $[-\pi,\pi]^2$.
We call 
\begin{equation}\label{eq:DFS}
  \tilde{f} \colon \mathbb{T}^2 \to \mathbb{C}, \quad \tilde{f}:=f\circ \phi
\end{equation} the \emph{DFS function} of $f$.

Applying fundamental trigonometric identities, we notice that for all $(\lambda,\theta) \in \T^2$,
\begin{equation} \label{eq:BMC}
  \begin{alignedat}{5} 
    & \phi(\lambda + \pi,-\theta) && =(\cos(\lambda + \pi)\sin(-\theta),&& \ \sin(\lambda + \pi)\sin(-\theta),&& \ \cos(-\theta)) && \\
    & && =((-\cos \lambda)(-\sin\theta ),&& \ (-\sin\lambda)(-\sin \theta),&& \ \cos(\theta) ) && =\phi(\lambda,\theta) .
  \end{alignedat}
\end{equation}

The definition of the DFS function is often given in the equivalent form
\begin{equation}\label{eq:DFS1}
    \tilde{f}\colon [-\pi,\pi]^2 \to \mathbb{C}, \ \tilde{f}(\lambda, \theta) := \left\{\begin{alignedat}{2} 
        & g(\lambda+\pi,\theta), &&\ (\lambda,\theta) \in [-\pi,0]\times [0,\pi],\\
        & h(\lambda,\theta), && \ (\lambda,\theta)\in [0,\pi] \times [0,\pi],\\
        & g(\lambda, -\theta), && \ (\lambda,\theta) \in [0,\pi]\times[-\pi,0],\\
        & h(\lambda+\pi,-\theta),  && \ (\lambda,\theta) \in [-\pi,0]\times [-\pi,0],
    \end{alignedat}\right.
\end{equation}
where $h(\lambda,\theta)=f \circ {\phi}(\lambda,\theta)$ and $g(\lambda,\theta)=f \circ {\phi}(\lambda-\pi,\theta)$ for all $(\lambda,\theta) \in [0,\pi]^2$,
cf.\ \cite{townsend}.
\Cref{eq:DFS1} reveals the geometric interpretation of DFS functions. 
Applying the DFS method to a function $f$ corresponds to first composing $f$ with the longitude-latitude transform and then combining the resulting transformation with its \emph{glide reflection}, i.e., the domain of the transformed function is doubled by reflecting it over the $\lambda$-axis and subsequently translating the reflection by $-\pi$ in $\lambda$-direction; the translation is understood as acting on the one-dimensional torus. 
\Cref{fig:DFS} shows the application of the DFS method.

\begin{figure}
    \begin{subfigure}{.4\textwidth}
        \centering
        \includegraphics[width=.9\textwidth]{graphics/sphere.png}
    \end{subfigure}
    {\Large$\xrightarrow{\text{DFS method}}$}
    \begin{subfigure}{.4\textwidth}
        \centering
        \includegraphics[width=.9\textwidth]{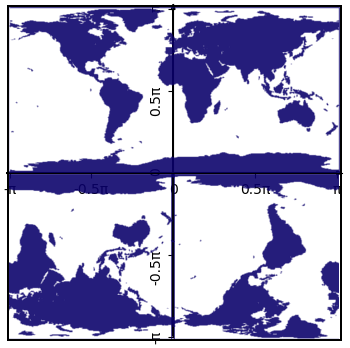}
    \end{subfigure}
    \caption{The DFS method applied to the landmasses of the earth.}
    \label{fig:DFS}
\end{figure}

A function $g\colon\T^2\to\C$ that satisfies 
\begin{equation} \label{eq:BMC2}
  g(\lambda,\theta) = g(\lambda+\pi,-\theta)
  ,\qquad (\lambda,\theta)\in\T^2,
\end{equation}
is called a \emph{block-mirror-centrosymmetric (BMC) function}.
If $g$ is also constant along the lines $\theta=0$ and $\theta=\pi$,
it is called a \emph{type-1 block-mirror-centrosymmetric (BMC-1) function}, see \cite[def.~2.2]{townsend}.
For any spherical function $f\colon\S^2\to\C$,
its DFS function $\tilde f$
is a BMC-1 function, 
which follows from the symmetry relation \eqref{eq:BMC}
and the fact that the lines $\theta=0$ and $\theta=\pi$ correspond to the north and south pole of the sphere, respectively.
Conversely, for
any BMC-1 function $g\colon\T^2\to\C$,
there exists a spherical function $f\colon \S^2\to\C$ such that $g$ is the DFS function of $f$.

\section{Hölder spaces on the sphere and on the torus}
\label{sec:hoel}

Since the notation in the literature varies slightly, we clarify the notion of different function spaces we use in the following. In the definitions of differentiability spaces and Hölder spaces on open subsets $U$ of $\mathbb{R}^d$, we follow \cite[p.~2~ff.]{triebel}.
Let $d\in\mathbb{N}$
and $k \in \NN$. 
We set
\begin{equation*}
    B_k^d:=\{\bm{\beta} \in \NN^d\mid \lvert \bm{\beta}\rvert \leq k\},
\end{equation*} 
where 
$
  \lvert \bm{\beta} \rvert := \sum_{i=1}^d \beta_i
$
denotes the $1$-norm of a multiindex $\bm{\beta} \in \NN^d$.

\begin{defin}[Spaces of differentiable functions]\label{defin:cont}
    Let $U \subset \mathbb{R}^d$ and $k \in \NN$. We define
    \begin{equation*}
        \mathcal{C}(U,\C^m)
        := \mathcal{C}^0(U,\C^m):=\{f \colon U \to \C^m \mid f \text{ is bounded and uniformly continuous}\}.
    \end{equation*}
    For $k\ge1$, we additionally assume that $U$ is open and define
    \begin{equation*}
        \mathcal{C}^k(U,\C^m)
        :=\{f \in \mathcal{C}(U,\C^m)\mid D^{\bm{\beta}}f \in \mathcal{C}(U,\C^m) \text{ for all } \bm{\beta} \in B^d_k\}.
    \end{equation*}
    Equipping this space
    with the norm
    \begin{equation*}
        \lVert f \rVert_{\mathcal{C}^k(U,\C^m)}
        :=\sum_{\bm{\beta} \in B^d_k} \sup_{\bm{x} \in U} \lVert D^{\beta} f(\bm{x})\rVert,
    \end{equation*}
    it becomes a Banach space.
\end{defin}

\begin{defin}[Lipschitz spaces]\label{defin:lip}
    Let 
    $U\subset \mathbb{R}^d$. 
    For $f\in \mathcal{C}(U,\C^m)$, we define the \emph{Lipschitz seminorm} by
    \begin{equation*}
      \lvert f \rvert_{\mathrm{Lip}(U,\C^m)} 
      := \sup_{{\bm{x}, \bm{y} \in U, \, \bm{x} \not= \bm{y}}} \frac{\lVert f(\bm{x})-f(\bm{y}) \rVert}{\lVert \bm{x}-\bm{y}\rVert}.
    \end{equation*}
    The \emph{Lipschitz space}
    \begin{equation*}
        \mathrm{Lip}(U,\C^m):=\{f\in \mathcal{C}(U,\C^m)\mid \lvert f \rvert_{\mathrm{Lip}(U,\C^m)}<\infty\},
    \end{equation*}
    is a Banach space equipped with the norm $\lVert \cdot \rVert_{\mathrm{Lip}(U,\C^m)} := \lVert \cdot \rVert_{\mathcal{C}(U,\C^m)}+\lvert \cdot \rvert_{\mathrm{Lip}(U,\C^m)}$. 
\end{defin}

\begin{defin}[Hölder spaces]\label{defin:hoel}
    Let $k \in \NN,\ 0<\alpha<1$, and let $U\subset \mathbb{R}^d$ be open. For $f \in \mathcal{C}^k(U)$, we define the $\mathcal{C}^{k,\alpha}$-\emph{seminorm}
    \begin{equation*}
        \lvert f \rvert_{\mathcal{C}^{k,\alpha}(U,\C^m)}
        :=\sup_{\substack{\bm{x},\bm{y} \in U,\,\bm{x}\not=\bm{y} \\\bm{\beta}\in B^d_k,\,\lvert\bm{\beta}\rvert=k}} \frac{\lVert D^{\bm{\beta}}f(\bm{x})-D^{\bm{\beta}}(\bm{y})\rVert}{\lVert \bm{x}-\bm{y}\rVert^\alpha}.
    \end{equation*}
    The $(k,\alpha)$-\emph{Hölder space} 
    \begin{equation*}
        \mathcal{C}^{k,\alpha}(U,\C^m):=\{f \in \mathcal{C}^k(U,\C^m) \mid \lvert f \rvert_{\mathcal{C}^{k,\alpha}(U)}<\infty\}
    \end{equation*}
    is a Banach space equipped with the norm $\lVert \cdot \rVert_{\mathcal{C}^{k,\alpha}(U,\C^m)}:=\lVert \cdot \rVert_{\mathcal{C}^k(U,\C^m)}+\lvert \cdot \rvert_{\mathcal{C}^{k,\alpha}(U,\C^m)}$.
    For ease of notation we interchangeably use $\mathcal{C}^k=\mathcal{C}^{k,0}$.
\end{defin}

For spaces of scalar-valued functions, we write $\mathcal{C}^{k,\alpha}(U) = \mathcal{C}^{k,\alpha}(U,\C)$.
The notions of Lipschitz and Hölder spaces transfer to the torus.

\begin{defin}[Function spaces on the torus] \label{defin:torus}
For $k \in \NN$ and $0\le\alpha<1$,
we define the \emph{Lipschitz space}
$
  \mathrm{Lip}(\mathbb{T}^d)
$
and the $(k,\alpha)$-\emph{Hölder space} 
$
  \mathcal{C}^{k,\alpha}(\mathbb{T}^d)
$
on the $d$-dimensional torus $\mathbb{T}^d$
as the restriction of $\mathrm{Lip}(\mathbb{R}^d)$ and $\mathcal{C}^{k,\alpha}(\mathbb{R}^d)$, respectively,
to functions that are $2\pi$-periodic in all dimensions.
\end{defin}

\begin{defin}[Hölder spaces on the sphere]\label{defin:sphere}
    Let $k \in \NN$, $0\leq \alpha <1$,
    and $f \colon\mathbb{S}^d \to \C$.
    If there exists $f^\ast \in \mathcal{C}^{k,\alpha}(U)$ on an open set $U \supset  \mathbb{S}^d$ such that
    $\restr{f^\ast}{\mathbb{S}^d}=f$,
    we call $f^\ast$ a $\mathcal{C}^{k,\alpha}$-\emph{extension} of $f$.
    For $\alpha=0$, we define the $\mathcal{C}^k$-\emph{extension seminorm} by
    \begin{equation}\label{eq:Ck-S}
        \big \lvert f^* \big \rvert_{\mathcal{C}^k(\mathbb{S}^d)}^*:=\sum_{\bm{\beta}\in B^{d+1}_k}  \big \lVert D^{\bm{\beta}}f^* \big \rVert_{\mathcal{C}(\mathbb{S}^d)}.
    \end{equation}
    For $\alpha>0$ and a $\mathcal{C}^{k,\alpha}$-extension $f^*$, we define the $\mathcal{C}^{k,\alpha}$-\emph{extension seminorm} by
    \begin{equation*}
        \big \lvert f^* \big \rvert_{\mathcal{C}^{k,\alpha}(\mathbb{S}^d)}^* :=\big\lvert f^* \big \rvert_{\mathcal{C}^k(\mathbb{S}^d)}^*+ \sup_{\substack{\bm{\xi},\bm{\eta}\in \mathbb{S}^d, \ \bm{\xi}\not=\bm{\eta}\\ \bm{\beta}\in B^{d+1}_k, \ \lvert \bm{\beta}\rvert=k}} \frac{\big\lvert D^{\bm{\beta}}f^*(\bm{\xi})-D^{\bm{\beta}}f^*(\bm{\eta})\big\rvert}{\lVert \bm{\xi}-\bm{\eta} \rVert^\alpha}.
    \end{equation*}
    We define the $C^{k,\alpha}$-\emph{norm} of $f \colon\mathbb{S}^d \to \C$ by
    \begin{equation*}
        \lVert f \rVert_{\mathcal{C}^{k,\alpha}(\mathbb{S}^d)}:=\inf \left\{ \big \lvert f^* \big\rvert_{\mathcal{C}^{k,\alpha}(\mathbb{S}^d)}^*\mid f^* \text{ is a } \mathcal{C}^{k,\alpha} \text{-extension of } f \right\}.
    \end{equation*}
    The $(k,\alpha)$-\emph{Hölder space on the $d$-dimensional sphere}, defined by
    \begin{equation*}
        \mathcal{C}^{k,\alpha}(\mathbb{S}^d):=\{f\colon\mathbb{S}^d \to \C \mid \lVert f \rVert_{\mathcal{C}^{k,\alpha}(\mathbb{S}^d)}<\infty\},
    \end{equation*}
    is a Banach space equipped with this norm.
\end{defin}

If $\alpha>0$ and $f \in \mathcal{C}^{k,\alpha}(X)$ by any of the \Cref{defin:hoel,defin:sphere}, we say that $f$ is $(k,\alpha)$-Hölder continuous on $X$, or simply $\alpha$-Hölder continuous if $k=0$. 
If in one of the \Cref{defin:cont,defin:lip,defin:hoel,defin:torus,defin:sphere}
we have $k=0$, we omit $k$.
The following result shows that Lipschitz continuity implies Hölder continuity.

\begin{prop}\label{prop:lipH}
	Let $U \subset \mathbb{R}^d$ be an open set, $V \subset U$, and $f:U \to \mathbb{C}^m$. If $f$ is bounded and Lipschitz-continuous on $V$, then $f$ is $\alpha$-Hölder continuous on $V$ for all $0<\alpha<1$ with 
	\begin{equation}\label{eq:lipH1}
		\lvert f \rvert_{\mathcal{C}^{\alpha}(V,\C^m)}\leq \max\{\lvert f \rvert_{\mathrm{Lip}(V,\C^m)},2\lVert f\rVert_{\mathcal{C}(V,\C^m)}\}.
	\end{equation} 
	Furthermore, if $V$ is convex
	and $f\in\mathcal{C}^1(U)$, then $f$ is Lipschitz-continuous on $V$ with
	\begin{equation}\label{eq:lipH2}
		\lvert f \rvert_{\mathrm{Lip}(V)}
    \leq \lVert \nabla f \rVert_{\mathcal{C}(U,\R^d)}
    \leq \lVert f \rVert_{\mathcal{C}^1(U)},
	\end{equation}
  where $\nabla f=(D^i f)_{i=1}^d$.
\end{prop}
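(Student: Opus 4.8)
The plan is to prove the two assertions separately, since they are logically independent and each reduces to an elementary estimate on the relevant difference quotient.

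For the first inequality \eqref{eq:lipH1}, I would estimate the Hölder difference quotient by splitting into the cases of nearby and distant points. Fix $\bm{x},\bm{y}\in V$ with $\bm{x}\neq\bm{y}$. If $\norm{\bm{x}-\bm{y}}\le 1$, then $0<\alpha<1$ gives $\norm{\bm{x}-\bm{y}}^\alpha\ge\norm{\bm{x}-\bm{y}}$, so the exponent-$\alpha$ quotient is dominated by the Lipschitz quotient and hence by $\lvert f\rvert_{\mathrm{Lip}(V,\C^m)}$. If instead $\norm{\bm{x}-\bm{y}}>1$, then $\norm{\bm{x}-\bm{y}}^\alpha>1$, so the triangle inequality gives $\norm{f(\bm{x})-f(\bm{y})}/\norm{\bm{x}-\bm{y}}^\alpha\le\norm{f(\bm{x})-f(\bm{y})}\le 2\norm{f}_{\mathcal{C}(V,\C^m)}$. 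Taking the supremum over all admissible $\bm{x},\bm{y}$ yields the maximum in \eqref{eq:lipH1}.

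For the second claim I would exploit convexity to apply the fundamental theorem of calculus along the segment joining two points. Given $\bm{x},\bm{y}\in V$, convexity ensures that the segment $\gamma(t)=\bm{x}+t(\bm{y}-\bm{x})$, $t\in[0,1]$, lies in $V\subset U$, so that $t\mapsto f(\gamma(t))$ is continuously differentiable and $f(\bm{y})-f(\bm{x})=\int_0^1\nabla f(\gamma(t))\cdot(\bm{y}-\bm{x})\d t$. Bounding the integrand by Cauchy--Schwarz gives $\norm{\nabla f(\gamma(t))}\,\norm{\bm{y}-\bm{x}}\le\norm{\nabla f}_{\mathcal{C}(U,\R^d)}\,\norm{\bm{y}-\bm{x}}$; dividing by $\norm{\bm{y}-\bm{x}}$ and taking the supremum proves the first inequality in \eqref{eq:lipH2}. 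The second inequality is the norm comparison $\norm{\nabla f(\bm{x})}=\bigl(\sum_{i=1}^d\abs{D^i f(\bm{x})}^2\bigr)^{1/2}\le\sum_{i=1}^d\abs{D^i f(\bm{x})}$, whose right-hand side is at most $\norm{f}_{\mathcal{C}^1(U)}$, since the latter sums the suprema of $\abs{f}$ and of all first-order derivatives by \Cref{defin:cont}.

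I do not anticipate a serious obstacle here, as the proposition is a collection of standard facts. The only point that genuinely needs care is the role of convexity in the second part: it is precisely what allows the connecting segment to remain in the domain, so the fundamental-theorem argument would break down on a non-convex $V$. I would also make sure each estimate holds uniformly in $\bm{x},\bm{y}$ (respectively in $\bm{x}$), so that passing to the supremum at the very end is justified rather than carried out pointwise.
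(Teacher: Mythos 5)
Your proposal is correct and follows essentially the same route as the paper: the identical case split at $\lVert\bm{x}-\bm{y}\rVert=1$ for \eqref{eq:lipH1}, and an estimate of $f$ along the connecting segment for \eqref{eq:lipH2}. The only (immaterial) difference is that you write $f(\bm{y})-f(\bm{x})$ as an integral of $\nabla f$ along the segment where the paper invokes a mean value theorem; both yield the same bound via Cauchy--Schwarz, and your integral form is if anything slightly cleaner for complex-valued $f$.
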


\begin{proof}
	Let $f$ be bounded and Lipschitz-continuous on $V$ and let $\bm{x},\bm{y} \in V$ with $\bm{x} \not=\bm{y}$. Then
	\begin{equation*}
		\frac{\lVert f(\bm{x})-f(\bm{y})\rVert}{\lVert \bm{x}-\bm{y}\lVert^\alpha}\leq 
		\begin{cases}
			\frac{\lVert f(\bm{x})-f(\bm{y})\Vert}{\lVert \bm{x}-\bm{y}\rVert}\leq \lvert f \rvert_{\mathrm{Lip}(V)}, & \lVert \bm{x}-\bm{y}\rVert \leq 1,\\
			\lVert f(\bm{x})-f(\bm{y})\rVert \leq 2 \lVert f\rVert_{\mathcal{C}(V)}, & \lVert \bm{x}-\bm{y}\rVert\geq 1,
		\end{cases}
	\end{equation*}
	which proves \eqref{eq:lipH1}.
  For $f\in\mathcal{C}^1(U)$, applying the mean value theorem  to $${[0,1]\ni t\mapsto f(\bm{x}+t(\bm{y}-\bm{x}))}$$ shows that there exists some $\tau\in [0,1]$ such that
  \begin{equation*}
    f(\bm{y})-f(\bm{x})
    = \inn{ \nabla f(\bm{x}+\tau(\bm{y}-\bm{x})), \bm{y}-\bm{x} }.
  \end{equation*}
  Taking the absolute value and the Cauchy--Schwarz inequality yield
  \begin{equation*}
    \lvert f(\bm{y})-f(\bm{x}) \rvert
    \le \norm{ \nabla f(\bm{x}+t(\bm{y}-\bm{x}))} \norm{ \bm{y}-\bm{x} }
    \le \norm{ \nabla f}_{\mathcal{C}(U,\R^d)} \norm{ \bm{y}-\bm{x} }.\qedhere
  \end{equation*}
\end{proof}

\section{Hölder continuity of DFS functions}\label{chap:hoelDFS}

In this section, we investigate sufficient conditions on a spherical function to ensure Hölder continuity of its DFS function. 
In particular, 
we show that the DFS function of a function in $\mathcal{C}^{k+1}(\mathbb{S}^2)$ or in $\mathcal{C}^{k,\alpha}(\mathbb{S}^2)$ is in the Hölder space $\mathcal{C}^{k,\alpha}(\mathbb{T}^2)$ 
and we prove upper bounds on the Hölder norms of such DFS functions.
This will allow us to obtain approximation properties of its Fourier series in \Cref{chap:3}.

We present two lemmas, which are necessary to prove the main results of this section.
The following technical lemma shows explicit bounds on the number of summands in the multivariate chain rule for higher derivatives of vector-valued functions.
We use the abbreviation $[n] \coloneqq \{1,2,\dots,n\}$ for $n\in\mathbb{N}$.

\begin{lem}\label{lem:faaDi}
    Let $U \subset \mathbb{R}^2$ and $V \subset \mathbb{R}^3$ be open sets, and both $h=(h_1,h_2,h_3) \colon U \to V$ and $g \colon V \to \mathbb{C}$ be at least $k$ times continuously differentiable for some $k \in \mathbb{N} $. Then, for any $\bm{\beta} \in B^2_k$, we have
    \begin{equation}\label{eq:indH6}
        D^{\bm{\beta}} (g \circ h)=  \sum_{i=1}^{n_{\bm{\beta}}} (D^{\bm{\gamma}_{\bm{\beta},i}}g)\circ h \cdot  \prod_{j=1}^{m_{\bm{\beta},i}} D^{\bm{\mu}_{\bm{\beta},i,j}}h_{l_{\bm{\beta},i,j}},
    \end{equation}
    where the constants fulfill
    \begin{align}
      n_{\bm{\beta}} \in \NN, \ &n_{\bm{\beta}}\leq 2^{-1} \, (k+2)!,\label{eq:indH1} \\
      \bm{\gamma}_{\bm{\beta},i} \in B^3_k, \ &i \in [{n_{\bm{\beta}}}],\label{eq:indH2} \\
      m_{\bm{\beta},i} \in B^1_k, \ &i \in [{n_{\bm{\beta}}}],\label{eq:indH3}\\
      \bm{\mu}_{\bm{\beta},i,j} \in B^2_k, \ &i\in [{n_{\bm{\beta}}}], \ j \in [m_{\bm{\beta},i}],\label{eq:indH4}\\
      l_{\bm{\beta},i,j}\in [3], \ &i \in [{n_{\bm{\beta}}}], \ j \in [m_{\bm{\beta},i}].\label{eq:indH5}
    \end{align}
\end{lem}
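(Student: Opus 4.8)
The plan is to prove \eqref{eq:indH6} by induction on the order $r := \lvert\bm{\beta}\rvert$, running from $0$ up to $k$, constructing the expansion explicitly through repeated application of the chain and product rules and then reading each of the five constant families off the construction. This is essentially a bookkeeping-explicit form of the multivariate Faà di Bruno formula, so the analytic content is routine; the point of interest is the factorial bound \eqref{eq:indH1} on the number of summands. The base case $r=0$ is immediate, since $D^{\bm{0}}(g\circ h)=g\circ h$ corresponds to $n_{\bm{0}}=1$, $\bm{\gamma}_{\bm{0},1}=\bm{0}$, and an empty product $m_{\bm{0},1}=0$, which respects all constraints because $1\le\tfrac12(0+2)!$.

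For the inductive step I would fix $\bm{\beta}$ with $\lvert\bm{\beta}\rvert=r\ge 1$, write $\bm{\beta}=\bm{\beta}'+\bm{e}_p$ for some $p\in\{1,2\}$ with $\lvert\bm{\beta}'\rvert=r-1$, and apply the single derivative $D^{\bm{e}_p}$ to the expansion of $D^{\bm{\beta}'}(g\circ h)$ supplied by the induction hypothesis. Differentiating one summand $(D^{\bm{\gamma}}g)\circ h\cdot\prod_{j}D^{\bm{\mu}_j}h_{l_j}$ by the product rule produces two kinds of new summands: (i) the chain rule applied to the factor $(D^{\bm{\gamma}}g)\circ h$ gives $\sum_{l=1}^{3}(D^{\bm{\gamma}+\bm{e}_l}g)\circ h\cdot D^{\bm{e}_p}h_l$ times the unchanged product, i.e.\ $3$ terms, each acquiring one extra factor of the form $D^{\bm{e}_p}h_l$; and (ii) differentiating each of the $m$ existing factors $D^{\bm{\mu}_j}h_{l_j}$ raises its multiindex to $\bm{\mu}_j+\bm{e}_p$, giving $m$ further terms. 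Hence each old summand spawns exactly $3+m$ new ones.

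The key bookkeeping, which I would record as two invariants preserved by this step, is that the order of the $g$-derivative equals the number of factors, $\lvert\bm{\gamma}_{\bm{\beta},i}\rvert=m_{\bm{\beta},i}$, and that the factor multiindices sum to the differentiation order, $\sum_{j=1}^{m_{\bm{\beta},i}}\lvert\bm{\mu}_{\bm{\beta},i,j}\rvert=\lvert\bm{\beta}\rvert$. Both are clearly maintained: case (i) raises $\lvert\bm{\gamma}\rvert$ and $m$ simultaneously by one while adding a degree-one factor, and case (ii) leaves $\lvert\bm{\gamma}\rvert$ and $m$ fixed while raising one $\lvert\bm{\mu}\rvert$ by one; in both cases the total degree increases by one. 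These invariants immediately yield \eqref{eq:indH2}–\eqref{eq:indH5}, since $m_{\bm{\beta},i}=\lvert\bm{\gamma}_{\bm{\beta},i}\rvert\le\lvert\bm{\beta}\rvert\le k$ forces both $\bm{\gamma}_{\bm{\beta},i}\in B^3_k$ and $m_{\bm{\beta},i}\in B^1_k$, while $\lvert\bm{\mu}_{\bm{\beta},i,j}\rvert\le\lvert\bm{\beta}\rvert\le k$ gives $\bm{\mu}_{\bm{\beta},i,j}\in B^2_k$, and the $l$-indices lie in $[3]$ by construction.

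It remains to control $n_{\bm{\beta}}$, which is where \eqref{eq:indH1} emerges. Using $m_{\bm{\beta}',i}\le\lvert\bm{\beta}'\rvert=r-1$, the count obeys
\[
  n_{\bm{\beta}}\le\sum_{i=1}^{n_{\bm{\beta}'}}\bigl(3+m_{\bm{\beta}',i}\bigr)\le n_{\bm{\beta}'}\,(r+2),
\]
so the induction hypothesis $n_{\bm{\beta}'}\le\tfrac12(r+1)!$ gives $n_{\bm{\beta}}\le\tfrac12(r+1)!\,(r+2)=\tfrac12(r+2)!$; since $r=\lvert\bm{\beta}\rvert\le k$ and the factorial is monotone, this is $\le\tfrac12(k+2)!$. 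The main obstacle is not any single computation but the simultaneous tracking of all five constant families through the combined product-and-chain-rule step; once the two invariants above are isolated, both the structural bounds and the factorial estimate follow mechanically. A minor point to verify is that every invoked derivative exists: in case (i) one differentiates $D^{\bm{\gamma}}g$ with $\lvert\bm{\gamma}\rvert\le r-1\le k-1$, which needs only the assumed $k$-fold differentiability of $g$, and the analogous bound holds for $h$.
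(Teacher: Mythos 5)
Your proposal is correct and follows essentially the same route as the paper: induction that peels off one derivative $D^{\bm{e}_p}$, applies the product rule plus the chain rule so that each old summand spawns $3+m$ new ones, and bounds the count by $n_{\bm{\beta}}\le(k+2)\,n_{\bm{\beta}'}\le 2^{-1}(k+2)!$. The only (harmless) cosmetic differences are that you induct on $\lvert\bm{\beta}\rvert$ rather than on $k$ and that you record the sharper invariants $\lvert\bm{\gamma}_{\bm{\beta},i}\rvert=m_{\bm{\beta},i}$ and $\sum_j\lvert\bm{\mu}_{\bm{\beta},i,j}\rvert=\lvert\bm{\beta}\rvert$, where the paper only tracks the weaker bounds \eqref{eq:indH2}--\eqref{eq:indH5} directly.
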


\begin{proof} 
    We prove the lemma by induction over $k$.
    
    \underline{Base case $k=1$:} Let $h$ and $g$ fulfill the conditions of the lemma for $k=1$. Clearly, the claim holds for $\bm{\beta}=0$. For $\bm{\beta}\in B^2_1\setminus \{0\}$, we can apply the multi-dimensional chain rule since $g$ and $h$ are continuously differentiable and we get
    \begin{equation*}
        D^{\bm{\beta}}(g \circ h)
        =\sum_{p=1}^3 \big( D^{\bm{e}^p} g \big) \circ h \cdot D^{\bm{\beta}}h_p,
    \end{equation*}
    where $\bm{e}^p$ denotes the $p$-th unit vector.
    The statement holds with $n_{\bm{\beta}}=3 = 2^{-1} (k+2)!$.
    
    \underline{Induction step:}
    Let $h$ and $g$ fulfill the conditions of the lemma for $k>1$ and assume the lemma holds for $k-1$. 
    For all $\bm{\beta} \in B^2_{k-1}$, the statement holds since we can replace $k-1$ by $k$ in the bounds on the constants. 
    Let $\bm{\beta}^+ \in B^2_k \setminus B^2_{k-1}$, then there exists $p \in \{1,2\}$ such that $\bm{\beta^{+}} = \bm{e}^p+ \bm{\beta}$ for some $\bm{\beta} \in B^2_{k-1}$. 
    By the induction hypothesis, we can choose constants which satisfy \Cref{eq:indH6,eq:indH1,eq:indH2,eq:indH3,eq:indH4,eq:indH5} for $\bm{\beta}$ and $k-1$. Since $g$ and $h$ are $k$ times continuously differentiable, their composition is also $k$ times continuously differentiable and we can apply \eqref{eq:indH6}:
    \begin{equation*}
        D^{\bm{\beta^{+}}}(g \circ h) = D^{\bm{e}^p+\bm{\beta}}(g \circ h)= D^{\bm{e}^p} \Big(D^{\bm{\beta}} (g \circ h) \Big )
        =D^{\bm{e}^p} \Big ( \sum_{i=1}^{n_{\bm{\beta}}} (D^{\bm{\gamma}_{\bm{\beta},i}}g)\circ h \cdot  \prod_{j=1}^{m_{\bm{\beta},i}} D^{\bm{\mu}_{\bm{\beta},i,j}}h_{l_{\bm{\beta},i,j}}\Big).
    \end{equation*}
    The product rule yields
    \begin{equation*}
      \begin{alignedat}{2}
        &  && D^{\bm{\beta^{+}}}(g \circ h) \\
        & = && \underbrace{\sum_{i=1}^{n_{\bm{\beta}}} D^{\bm{e}^p}\Big((D^{\bm{\gamma}_{\bm{\beta},i}}g)\circ h\Big) \cdot  \prod_{j=1}^{m_{\bm{\beta},i}} D^{\bm{\mu}_{\bm{\beta},i,j}}h_{l_{\bm{\beta},i,j}}}_{=:A} +\underbrace{\sum_{i=1}^{n_{\bm{\beta}}} (D^{\bm{\gamma}_{\bm{\beta},i}}g)\circ h \cdot  D^{\bm{e}^p} \Big(\prod_{j=1}^{m_{\bm{\beta},i}} D^{\bm{\mu}_{\bm{\beta},i,j}}h_{l_{\bm{\beta},i,j}}\Big)}_{=:B}.
      \end{alignedat}
    \end{equation*}
    By assumption $g$ is $k$ times continuously differentiable and $\lvert \bm{\gamma_{\beta,i}} \rvert \leq k-1$ by \eqref{eq:indH2}, therefore $D^{\bm{\gamma}_{\bm{\beta},i}}g$ is continuously differentiable for all $i$. Since $h$ is also continuously differentiable, we can apply the multi-dimensional chain rule to the summands in
    \begin{equation*}
        \begin{alignedat}{3}
            & A \ && =&&\sum_{i=1}^{n_{\bm{\beta}}} D^{\bm{e}^p}\Big((D^{\bm{\gamma}_{\bm{\beta},i}}g)\circ h\Big) \cdot  \prod_{j=1}^{m_{\bm{\beta},i}} D^{\bm{\mu}_{\bm{\beta},i,j}}h_{l_{\bm{\beta},i,j}}\\
            & && = \ && \sum_{i=1}^{n_{\bm{\beta}}} \sum_{q=1}^3 (D^{(\bm{e}^q+\bm{\gamma}_{\bm{\beta},i})}g)\circ h \cdot D^{\bm{e}^p}h_q \cdot \prod_{j=1}^{m_{\bm{\beta},i}} D^{\bm{\mu}_{\bm{\beta},i,j}}h_{l_{\bm{\beta},i,j}}.
        \end{alignedat}
    \end{equation*}
    Similarly, since $h$ is $k$ times continuously differentiable and $\lvert\bm{\mu}_{\bm{\beta},i,j}\rvert\leq k-1$ by \eqref{eq:indH5}, we know that $D^{\bm{\mu}_{\bm{\beta},i,j}}h_{l_{\bm{\beta},i,j}}$ is continuously differentiable for all $i$ and all $j$ and we can apply the product rule to 
    \begin{equation*}
        \begin{aligned}
            B &  = \sum_{i=1}^{n_{\bm{\beta}}} (D^{\bm{\gamma}_{\bm{\beta},i}}g)\circ h \cdot  D^{\bm{e}^p} \Big(\prod_{j=1}^{m_{\bm{\beta},i}} D^{\bm{\mu}_{\bm{\beta},i,j}}h_{l_{\bm{\beta},i,j}}\Big)\\
            &= \sum_{i=1}^{n_{\bm{\beta}}} \sum_{r=1}^{m_{\bm{\beta},i}} (D^{\bm{\gamma}_{\bm{\beta},i}}g)\circ h \cdot D^{\bm{e}^p+\bm{\mu}_{\bm{\beta},i,r}}h_{l_{\bm{\beta},i,r}} \cdot \prod_{\substack{j=1\\ j\not=r}}^{m_{\bm{\beta},i}} D^{\bm{\mu}_{\bm{\beta},i,j}}h_{l_{\bm{\beta},i,j}}.
        \end{aligned}
    \end{equation*}
    We combine the resulting sums and relabel the constants to obtain 
    \begin{equation*}
        D^{\bm{\beta}^+} (g \circ h)=  \sum_{i=1}^{n_{\bm{\beta}^+}} (D^{\bm{\gamma}_{\bm{\beta}^+,i}}g)\circ h \cdot  \prod_{j=1}^{m_{\bm{\beta}^+,i}} D^{\bm{\mu}_{\bm{\beta}^+,i,j}}h_{l_{\bm{\beta}^+,i,j}},
    \end{equation*}
    where for all  $i_1 \in [n_{\bm{\beta}^+}]$ and $j_1 \in [m_{\bm{\beta}^+,i_1}]$ there exist $i_2 \in [n_{\bm{\beta}}]$ and $j_2 \in [m_{\bm{\beta},i_2}]$ such that the constants satisfy 
    \allowdisplaybreaks%
    \begin{align*}
            &\bullet \ \bm{\gamma}_{\bm{\beta}^+,i_1}=\bm{e}^{l_{\bm{\beta}^+,i_1,j_1}}+\bm{\gamma}_{\bm{\beta},i_2} \text{ or } \bm{\gamma}_{\bm{\beta}^+,i_1}=\bm{\gamma}_{\bm{\beta},i_2} && \underset{\mathrm{\eqref{eq:indH2}}}{\implies} \lvert \bm{\gamma}_{\bm{\beta}^+,i_1}\rvert  \leq k,\\
            & \bullet \ m_{\bm{\beta}^+,i_1}=1+m_{\bm{\beta},i_2} \text{ or }  m_{\bm{\beta}^+,i_1}=m_{\bm{\beta},i_2} && \underset{\mathrm{\eqref{eq:indH3}}}{\implies}  m_{\bm{\beta}^+,i_1}\leq k,\\
            & \bullet \ \bm{\mu}_{\bm{\beta}^+,i_1,j_1}=\bm{e}^p, \ \bm{\mu}_{\bm{\beta}^+,i_1,j_1}= \bm{\mu}_{\bm{\beta},i_2,j_2} \text{, or } \bm{\mu}_{\bm{\beta}^+,i_1,j_1}=\bm{e}^p+\bm{\mu}_{\bm{\beta},i_2,j_2} && \underset{\mathrm{\eqref{eq:indH4}}}{\implies} \lvert \bm{\mu}_{\bm{\beta}^+,i_1,j_1} \rvert \leq k,\\
            & \bullet \ l_{\bm{\beta}^+,i_1,j_1} \in [3],
    \end{align*}%
    \allowdisplaybreaks[0]%
    i.e., the constants satisfy \Cref{eq:indH6,eq:indH2,eq:indH3,eq:indH4,eq:indH5} for $\bm{\beta}^+$ and $k$. It remains to prove \eqref{eq:indH1} for $n_{\bm{\beta}^+}$:
    The sum resulting from $A$ has $3{n_{\bm{\beta}}}$ summands and the sum resulting from $B$ has $\sum_{i=1}^{n_{\bm{\beta}}} m_{\bm{\beta},i}\leq \max \{m_{\bm{\beta},i}, \ i\in [{n_{\bm{\beta}}}]\} \, {n_{\bm{\beta}}} \leq (k-1)\, {n_{\bm{\beta}}}$ summands. Hence, by \eqref{eq:indH1}, we obtain 
    \begin{equation*}
         n_{\bm{\beta}^+} \leq (3+k-1) n_{\bm{\beta}} \leq (k+2) \, 2^{-1} (k-1+2)!= 2^{-1} (k+2)!. \qedhere
    \end{equation*}
\end{proof}

\begin{lem}\label{lem:DFSBd}
    The DFS coordinate transform $\phi \colon \mathbb{R}^2 \to \mathbb{S}^2$ is infinitely differentiable with Lipschitz-continuous derivatives and $(k,\alpha)$-Hölder continuous for all $k \in \NN$ and $0<\alpha<1$. For all $\bm{\mu}\in \NN^2$ and $l \in [3]$, we have
    \begin{align}
        \big\lVert D^{\bm{\mu}} \phi_l \big\rVert_{\mathcal{C}(\mathbb{R}^2)} \leq 1, \label{eq:DFSinf}\\
        \big\lvert D^{\bm{\mu}} \phi \big\rvert_{\mathrm{Lip}(\mathbb{R}^2,\R^3)} \leq 1, \label{eq:DFSL}\\
        \big\lvert D^{\bm{\mu}}\phi \big\rvert_{\mathcal{C}^{\alpha}(\mathbb{R}^2,\R^3)} \hspace{2pt} \leq 2 \label{eq:DFSH}.
    \end{align}
\end{lem}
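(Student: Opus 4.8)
The plan is to exploit the product structure of the components of $\phi$ together with the rotational structure of the derivatives of $\sin$ and $\cos$. Writing $\bm{\mu}=(a,b)$, each component of $D^{\bm{\mu}}\phi$ factors as a derivative of $\cos\lambda$ or $\sin\lambda$ in the first variable times a derivative of $\sin\theta$ or $\cos\theta$ in the second, with the third component $D^{\bm{\mu}}\phi_3=\partial_\theta^b\cos\theta$ vanishing whenever $a\ge 1$. The elementary observation I would record first is that $\partial_\lambda^a(\cos\lambda,\sin\lambda)$ is the unit vector $(\cos\lambda,\sin\lambda)$ rotated by $a$ quarter turns, so it again has Euclidean norm one and is orthogonal to $\partial_\lambda^{a+1}(\cos\lambda,\sin\lambda)$; the analogous statement holds for $\partial_\theta^b(\sin\theta,\cos\theta)$. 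From this, \eqref{eq:DFSinf} is immediate, since every factor is one of $\pm\sin,\pm\cos$ and is thus bounded by one in modulus; infinite differentiability needs no argument beyond the smoothness of the trigonometric functions.

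For the Lipschitz bound \eqref{eq:DFSL} I would invoke the mean value inequality for vector-valued maps, $\abs{F}_{\mathrm{Lip}(\R^2,\R^3)}\le\sup_z\norm{DF(z)}_{\mathrm{op}}$, applied to $F=D^{\bm{\mu}}\phi$; this reduces to the scalar \Cref{prop:lipH}, namely \eqref{eq:lipH2}, by pairing $F$ with unit vectors of $\R^3$. The heart of the proof is then a direct computation of the Jacobian $J=[\partial_\lambda F\mid\partial_\theta F]$. Using the rotation observation one checks that the two columns $\partial_\lambda F$ and $\partial_\theta F$ are orthogonal in $\R^3$ and that each has norm at most one; indeed $\norm{\partial_\lambda F}^2=s_b^2$ and $\norm{\partial_\theta F}^2=s_{b+1}^2+[a{=}0]\,t_{b+1}^2\le 1$, where $s_b,t_b$ denote the relevant $\theta$-derivatives. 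Since the columns are orthogonal, $\norm{J}_{\mathrm{op}}$ equals the larger of the two column norms and is therefore bounded by one, which gives \eqref{eq:DFSL}.

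Finally, \eqref{eq:DFSH} follows by feeding \eqref{eq:DFSL} together with the sup-norm bound $\norm{D^{\bm{\mu}}\phi}_{\mathcal{C}(\R^2,\R^3)}\le 1$, obtained from the same factorization, into \eqref{eq:lipH1} of \Cref{prop:lipH}, which yields $\max\{1,2\cdot 1\}=2$. The asserted $(k,\alpha)$-Hölder continuity is then just the combination of these per-multiindex estimates across all $\bm{\mu}\in\NN^2$.

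The main obstacle I anticipate is securing the sharp constant one, rather than $\sqrt2$ or $\sqrt3$, in \eqref{eq:DFSL}. A componentwise application of the scalar gradient bound \eqref{eq:lipH2}, or a crude Frobenius-norm estimate of $\norm{J}_{\mathrm{op}}$, only produces $\sqrt2$. The exact value hinges on the structural fact that the two Jacobian columns stay orthogonal for \emph{every} order of differentiation, a consequence of $\inn{\partial_\lambda^a v,\partial_\lambda^{a+1}v}=0$ for $v=(\cos\lambda,\sin\lambda)$ and the corresponding identity in $\theta$; this generalizes the familiar orthogonality $\partial_\lambda\phi\perp\partial_\theta\phi$ of the coordinate tangent vectors to all higher mixed derivatives.
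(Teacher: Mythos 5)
Your proof is correct and follows essentially the same route as the paper: the sup-norm bound from the product-of-trigonometric-factors structure, the Lipschitz bound via a vector-valued mean value inequality combined with an explicit estimate of $\norm{J(D^{\bm{\mu}}\phi)\,\bm{h}}$, and the Hölder bound by feeding \eqref{eq:DFSinf} and \eqref{eq:DFSL} into \Cref{prop:lipH}. The one place you go beyond the paper is the general-$\bm{\mu}$ case: the paper verifies $\bm{\mu}=\bm{0}$ explicitly (where the orthogonality of the Jacobian columns appears as the identity $\norm{J\phi(\bm{v})\bm{h}}^2=h_1^2\sin^2v_2+h_2^2$) and then reduces the remaining cases to a finite list of representatives whose verification it omits, whereas your quarter-turn observation $\inn{\partial_\lambda^a v,\partial_\lambda^{a+1}v}=0$ handles all orders uniformly and correctly isolates why the constant is $1$ rather than $\sqrt2$.
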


\begin{proof}
    We note that $\phi_l$ is the product of sine or cosine for any $l \in [3]$.
    Therefore, all partial derivatives are also the product of sine, cosine or zero, which implies \eqref{eq:DFSinf}.
    We show the Lipschitz continuity of $\phi$.
    A mean value theorem for vector-valued functions in \cite[p.\ 113]{Rud76} states that for any $g\in\mathcal{C}^1(\R)$ there exists some $\tau\in[0,1]$ such that
    $$
    \norm{g(1)-g(0)}
    \le \norm{\nabla g(\tau)}.
    $$
    Let $\bm{x},\,\bm{h}\in\T^2$.
    For $g(t):= \phi(\bm{x}+t\bm{h})$, $t\in[0,1]$ and $\bm{v} := \bm{x}+\tau\bm{h}$, we see that
    $$
    \norm{\phi(\bm{x}+\bm{h})-\phi(\bm{x})}
    \le \norm{J \phi(\bm{v}) \, \bm{h}},
    $$
    where $J\phi$ denotes the Jacobian.
    We have
    \begin{equation*}
      \norm{J\phi (\bm{v})\, \bm{h}}^2
      = \norm{\begin{pmatrix}
        -\sin v_1\, \sin v_2 & \cos v_1\, \cos v_2\\
        \cos v_1\, \sin v_2 & \sin v_1\, \cos v_2\\
        0 & -\sin v_2
      \end{pmatrix}
      \bm{h}}^2
    = h_1^2\, \sin^2v_2 + h_2^2
    \le \norm{\bm{h}}^2.
    \end{equation*}
    This shows \eqref{eq:DFSL} for $\bm{\mu}=\bm{0}$.
    For general $\bm{\mu}\in\NN^2$,
    we note that $D^{\bm{\mu}}\phi$ takes only a limited number of different functions as derivatives of the product of sine or cosine.
    Since a changed sign does not affect the norm, we need to show \eqref{eq:DFSL} only for ${\bm{\mu}\in\{(1,0),(2,0),(0,1),(1,1),(2,1)\}}$,
    which is be done with the same arguments as for $\bm{\mu}=\bm{0}$ and therefore omitted here.
    Finally, the Hölder continuity \eqref{eq:DFSH} follows from the above and \Cref{prop:lipH}.
\end{proof}

\begin{thm}\label{thm:diffC}
    Let $k \in \NN$ and $f \in \mathcal{C}^{k+1}(\mathbb{S}^2)$. Then for all $0<\alpha<1$, the DFS function $\tilde{f}$ of $f$ is in $\mathcal{C}^{k,\alpha}(\mathbb{T}^2)$ and we have
    \begin{equation}\label{eq:diffC1}
        \big\lvert \tilde{f} \big\rvert_{\mathcal{C}^{k,\alpha}(\mathbb{T}^2)}\leq (k+3)!\, \lVert f \rVert_{\mathcal{C}^{k+1}(\mathbb{S}^2)},
    \end{equation}
    and
    \begin{equation}\label{eq:diffC2}
         \big\lVert \tilde{f} \big\rVert_{\mathcal{C}^{k,\alpha}(\mathbb{T}^2)}\leq \frac{7}{4} \, (k+3)!\, \lVert f \rVert_{\mathcal{C}^{k+1}(\mathbb{S}^2)}.       
    \end{equation}
\end{thm}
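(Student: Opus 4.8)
The plan is to fix a $\mathcal{C}^{k+1}$-extension $f^*$ of $f$ on an open set $U\supset\S^2$ and to exploit that $\tilde f=f\circ\phi=f^*\circ\phi$, since $\phi$ maps into $\S^2$, where $f^*$ agrees with $f$. As $\phi\in\mathcal{C}^\infty(\R^2,\S^2)$ and $f^*\in\mathcal{C}^{k+1}(U)$, the chain-rule expansion of \Cref{lem:faaDi} applies with $g=f^*$ and $h=\phi$, giving for every $\bm{\beta}\in B^2_k$
\[
  D^{\bm{\beta}}\tilde f
  =\sum_{i=1}^{n_{\bm{\beta}}}\big((D^{\bm{\gamma}_{\bm{\beta},i}}f^*)\circ\phi\big)\prod_{j=1}^{m_{\bm{\beta},i}}D^{\bm{\mu}_{\bm{\beta},i,j}}\phi_{l_{\bm{\beta},i,j}},
\]
a sum of $n_{\bm{\beta}}\le\tfrac12(\lvert\bm{\beta}\rvert+2)!$ products (applying \Cref{lem:faaDi} with $k$ replaced by $\lvert\bm{\beta}\rvert$). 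I would bound every factor in terms of the extension seminorm, abbreviating $M:=\max_{\bm{\gamma}\in B^3_{k+1}}\norm{D^{\bm{\gamma}}f^*}_{\mathcal{C}(\S^2)}\le\lvert f^*\rvert_{\mathcal{C}^{k+1}(\S^2)}^{*}$, and pass to $\norm{f}_{\mathcal{C}^{k+1}(\S^2)}$ by taking the infimum over extensions only at the very end.

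For the $\mathcal{C}^k$-part, \Cref{lem:DFSBd} gives $\norm{D^{\bm{\mu}}\phi_l}_{\mathcal{C}(\R^2)}\le1$, while $\norm{(D^{\bm{\gamma}_{\bm{\beta},i}}f^*)\circ\phi}_{\mathcal{C}(\R^2)}=\norm{D^{\bm{\gamma}_{\bm{\beta},i}}f^*}_{\mathcal{C}(\S^2)}\le M$ because $\phi$ is onto $\S^2$ and $\lvert\bm{\gamma}_{\bm{\beta},i}\rvert\le k$. Hence $\norm{D^{\bm{\beta}}\tilde f}_{\mathcal{C}(\R^2)}\le n_{\bm{\beta}}M$; summing over $B^2_k$ (which contains $j+1$ multiindices of length $j$) and using the telescoping identity $\sum_{j=0}^{k}(j+2)\,(j+2)!=(k+3)!-2$ would yield $\norm{\tilde f}_{\mathcal{C}^k(\T^2)}\le\tfrac12\big((k+3)!-2\big)M$.

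The heart of the matter is the Hölder seminorm, which by \Cref{defin:hoel} equals $\max_{\lvert\bm{\beta}\rvert=k}\lvert D^{\bm{\beta}}\tilde f\rvert_{\mathcal{C}^\alpha}$. I would use subadditivity over the $n_{\bm{\beta}}$ products together with the Leibniz-type bound $\lvert\prod_r A_r\rvert_{\mathcal{C}^\alpha}\le\sum_r\lvert A_r\rvert_{\mathcal{C}^\alpha}\prod_{s\ne r}\norm{A_s}_{\mathcal{C}}$. For the $\phi$-factors \Cref{lem:DFSBd} supplies $\norm{\cdot}_{\mathcal{C}}\le1$ and $\lvert\cdot\rvert_{\mathcal{C}^\alpha}\le2$, contributing at most $2m_{\bm{\beta},i}M\le2kM$ per term. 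The remaining factor $A_0:=(D^{\bm{\gamma}_{\bm{\beta},i}}f^*)\circ\phi$ is the obstacle: one is tempted to use that $D^{\bm{\gamma}}f^*$ is Lipschitz on $\S^2$, but \eqref{eq:lipH2} turns $\mathcal{C}^1$ into Lipschitz only on \emph{convex} sets, and $\S^2$ is not convex. I sidestep this by differentiating on the convex domain $\R^2$: the chain rule gives $\nabla A_0=(J\phi)^{\!\top}\,(\nabla D^{\bm{\gamma}_{\bm{\beta},i}}f^*)\circ\phi$, and the computation in the proof of \Cref{lem:DFSBd} shows the operator norm of $J\phi$ is at most $1$, so \eqref{eq:lipH2} gives $\lvert A_0\rvert_{\mathrm{Lip}(\R^2)}\le\norm{\nabla A_0}_{\mathcal{C}}\le\norm{\nabla D^{\bm{\gamma}_{\bm{\beta},i}}f^*}_{\mathcal{C}(\S^2,\R^3)}\le3M$. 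Here derivatives of order $\lvert\bm{e}^q+\bm{\gamma}_{\bm{\beta},i}\rvert\le k+1$ appear, which is precisely why $f\in\mathcal{C}^{k+1}$ rather than $\mathcal{C}^{k}$ is needed; \Cref{prop:lipH} then upgrades this to $\lvert A_0\rvert_{\mathcal{C}^\alpha}\le\max\{3M,2M\}=3M$.

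Combining, each product has Hölder seminorm at most $3M+2kM=(2k+3)M$, whence $\lvert\tilde f\rvert_{\mathcal{C}^{k,\alpha}(\T^2)}\le\tfrac12(k+2)!\,(2k+3)M\le(k+3)!\,M$, using $\tfrac12(2k+3)(k+2)!=(k+\tfrac32)(k+2)!\le(k+3)!$; taking the infimum over extensions gives \eqref{eq:diffC1}. Adding the $\mathcal{C}^k$-estimate yields $\norm{\tilde f}_{\mathcal{C}^{k,\alpha}(\T^2)}\le\big(\tfrac12+1\big)(k+3)!\,M\le\tfrac74(k+3)!\,M$, and the infimum over extensions gives \eqref{eq:diffC2}. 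The main obstacle is the non-convexity of $\S^2$ just described, resolved by working on $\R^2$ via $\norm{J\phi}_{\mathrm{op}}\le1$; the only other delicate point is the combinatorial bookkeeping that turns the counts $n_{\bm{\beta}}$ and $m_{\bm{\beta},i}$ into the clean factorial constants.
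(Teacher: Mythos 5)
Your proof is correct, but it takes a genuinely different route at the key step. For the H\"older seminorm of $D^{\bm{\beta}}\tilde f$ with $\lvert\bm{\beta}\rvert=k$, the paper does not touch the product structure of the chain-rule expansion at all: it simply observes that $\tilde f=f^*\circ\phi$ is $(k+1)$-times continuously differentiable on $\R^2$, so the components of $\nabla(D^{\bm{\beta}}\tilde f)$ are the order-$(k{+}1)$ derivatives $D^{\bm{e}^p+\bm{\beta}}\tilde f$, which were already bounded in sup-norm by $2^{-1}(k+3)!\,\lVert f\rVert_{\mathcal{C}^{k+1}(\S^2)}$ in the first step; then \eqref{eq:lipH2} on the convex set $\R^2$ and \eqref{eq:lipH1} immediately give the H\"older bound. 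You instead apply a Leibniz-type estimate for H\"older seminorms of products to the expansion from \Cref{lem:faaDi}, handling the factor $(D^{\bm{\gamma}_i}f^*)\circ\phi$ via $\lVert J\phi\rVert_{\mathrm{op}}\le1$ and \Cref{prop:lipH}, and handling the $\phi$-factors via \eqref{eq:DFSH}. Both arguments are sound and both correctly route all Lipschitz estimates through the convex domain $\R^2$ rather than $\S^2$ (the non-convexity issue you flag is real, and the paper avoids it the same way, just one level higher); your constants even come out slightly sharper ($\tfrac12(k+3)!$ for the $\mathcal{C}^k$-part instead of $\tfrac34(k+3)!$, and $\tfrac32$ instead of $\tfrac74$ overall). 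What the paper's shortcut buys is brevity when an extra full derivative is available; what your argument buys is that it works factor by factor and so extends to the case where $(D^{\bm{\gamma}_i}f^*)\circ\phi$ is merely H\"older rather than $\mathcal{C}^1$ --- indeed your computation is essentially the one the paper deploys later for \Cref{thm:hoelC}, so you have in effect proved the harder statement's mechanism here.
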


\begin{proof}
    Let $k \in \NN$ and $f \in \mathcal{C}^{k+1}(\mathbb{S}^2)$. 
    For $k^\prime \in \NN$ with $0<k^\prime \leq k+1$ and an open set $U\supset\S^2$, we consider a $\mathcal{C}^{k'}$-extension $f^*\in \mathcal{C}^{k^\prime}(U)$ of $f$. 
    We note that the DFS coordinate transform $\phi$ satisfies $\phi[\mathbb{R}^2] = \mathbb{S}^2 \subset U$, so we can consider it as function 
    $\phi\colon\R^2\to U$.

    Let $\bm{\beta}\in B^2_{k^\prime}$.
    By \Cref{lem:faaDi}, the DFS function $\tilde{f}$ is $k^\prime$-times continuously differentiable and there exist constants of \Cref{eq:indH1,eq:indH2,eq:indH3,eq:indH4,eq:indH5}, such that we have
    \begin{equation*}
        D^{\bm{\beta}} \tilde{f} = D^{\bm{\beta}} (f^* \circ \phi)= \sum_{i=1}^n (D^{\bm{\gamma}_i}f^*)\circ \phi \cdot \prod_{j=1}^{m_i} D^{\bm{\mu}_{i,j}}\phi_{l_{i,j}}.
    \end{equation*}
    Let  $\bm{x} \in \mathbb{R}^2$.
    It follows that
    \begin{align*}
        \big\lvert D^{\bm{\beta}}\tilde{f}(\bm{x})\big\rvert 
        & \leq  \sum_{i=1}^n \big\lvert (D^{\bm{\gamma}_i}f^*)\circ \phi(\bm{x})\big\rvert \, \prod_{j=1}^{m_i} \big\lvert D^{\bm{\mu}_{i,j}}\phi_{l_{i,j}}(\bm{x})\big\rvert .
    \intertext{By \eqref{eq:Ck-S}, \eqref{eq:indH1}, and \eqref{eq:DFSinf} , we obtain}
    		\big\lvert D^{\bm{\beta}}\tilde{f}(\bm{x})\big\rvert 
    		& \leq n \, \big\lvert f^* \big\rvert_{\mathcal{C}^{k^\prime}(\mathbb{S}^2)}^*
        \leq 2^{-1}\, (k^\prime+2)! \, \big\lvert f^* \big\rvert_{\mathcal{C}^{k^\prime}(\mathbb{S}^2)}^*.
    \end{align*}
    Note that this bound holds for any $\mathcal{C}^{k^\prime}$-extension $f^*$ of $f$. Hence, applying \Cref{defin:sphere}, we can replace $\big\lvert f^* \big\rvert_{\mathcal{C}^{k^\prime}(\mathbb{S}^2)}^*$ by $\lVert f \rVert_{\mathcal{C}^{k^\prime}(\mathbb{S}^2)}$ on the right hand side and the bound still holds. Therefore, we have for all $\bm{\beta} \in B^2_{k+1}$ that
    \begin{equation}\label{eq:hb1}
        \big\lVert D^{\bm{\beta}}\tilde{f}\big\rVert_{\mathcal{C}(\mathbb{R}^2)} \leq 2^{-1}\, (\lvert \bm{\beta}\rvert +2)! \, \lVert f \rVert_{\mathcal{C}^{\lvert \bm{\beta}\rvert}(\mathbb{S}^2)}.
    \end{equation}
    Furthermore, for $\bm{\beta}\in B^2_k$ we know that $D^{\bm{\beta}}\tilde{f}$ is continuously differentiable with
    \begin{equation*}
        \begin{alignedat}{2}
            & \big\lVert \nabla (D^{\bm{\beta}} \tilde{f})\big\rVert_{\mathcal{C}(\mathbb{R}^2)} \ && =\sup_{\bm{x}\in \mathbb{R}^2} \Big( \big\lvert D^{(\bm{e}^1+\bm{\beta})}\tilde{f}(\bm{x})\big\rvert^2+\big\lvert D^{(\bm{e}^2+\bm{\beta})}\tilde{f}(\bm{x})\big\rvert^2 \Big)^{\frac{1}{2}}\\
            & && \leq \Big(2\, \big(2^{-1}\, (k+3)! \, \lVert f \rVert_{\mathcal{C}^{k+1}(\mathbb{S}^2)}\big)^2\Big)^{\frac{1}{2}}\\
            & && = 2^{-\frac{1}{2}}\, (k+3)! \, \lVert f \rVert_{\mathcal{C}^{k+1}(\mathbb{S}^2)}.
        \end{alignedat}
    \end{equation*}
    By \eqref{eq:lipH2}, this implies that
    \begin{equation}\label{eq:hb2}
        \big\lvert D^{\bm{\beta}}\tilde{f}\big\rvert_{\mathrm{Lip}(\mathbb{R}^2)}
        \leq \sup_{\bm{x} \in \R^2} \big\lVert \nabla(D^{\bm{\beta}}\tilde{f})(\bm{x}) \big\rVert 
        \leq (k+3)! \, \lVert f \rVert_{\mathcal{C}^{k+1}(\mathbb{S}^2)}.
    \end{equation}
    Combining \eqref{eq:hb1} with \eqref{eq:hb2} and applying \eqref{eq:lipH1}, we conclude for all $\bm{\beta}\in B^2_{k}$ that $D^{\bm{\beta}}\tilde{f}$ is $\alpha$-Hölder continuous with
    \begin{equation*}
        \big\lvert D^{\bm{\beta}}\tilde{f}\big\rvert_{\mathcal{C}^\alpha(\mathbb{T}^2)}\leq \max\{\big\lvert D^{\bm{\beta}}\tilde{f}\big\rvert_{\mathrm{Lip}(\mathbb{T}^2)},2\big\lVert D^{\bm{\beta}}\tilde{f} \big\rVert_{\mathcal{C}(\mathbb{T}^2)}\}\leq (k+3)! \, \lVert f \rVert_{\mathcal{C}^{k+1}(\mathbb{S}^2)}.
    \end{equation*}
    Since the right hand side is independent of $\bm{\beta}$, it follows that $\tilde{f}$ is $(k,\alpha)$-Hölder continuous with
    \begin{equation*}
        \big\lvert \tilde{f}\big\rvert_{\mathcal{C}^{k,\alpha}(\mathbb{T}^2)}= \max_{\bm{\beta}\in B^2_k, \lvert \bm{\beta}\rvert =k} \big\lvert D^{\bm{\beta}}\tilde{f}\big\rvert_{\mathcal{C}^\alpha(\mathbb{T}^2)}\leq (k+3)! \, \lVert f \rVert_{\mathcal{C}^{k+1}(\mathbb{S}^2)},
    \end{equation*}
    which proves \eqref{eq:diffC1}.
    By \eqref{eq:hb1}, we have
    \begin{equation*}
        \big\lVert \tilde{f} \big\rVert_{\mathcal{C}^k(\mathbb{T}^2)} 
        =\sum_{\bm{\beta}\in B^2_k} \big\lVert D^{\bm{\beta}}\tilde{f}\big\rVert_{\mathcal{C}(\mathbb{R}^2)}\\
        \leq \sum_{i=0}^k \sum_{\bm{\beta}\in B^2_k,\, \lvert \bm{\beta}\rvert=i} \frac{1}{2} \, (i+2)! \, \lVert f \rVert_{\mathcal{C}^i(\mathbb{S}^2)}.
        \end{equation*}
      Splitting the first sum and noting that the inner sum comprises $i+1$ summands, we obtain
        \begin{align}
            \big\lVert \tilde{f} \big\rVert_{\mathcal{C}^k(\mathbb{T}^2)}
            &\leq \frac{1}{2} \lVert f \rVert_{\mathcal{C}^{k}(\mathbb{S}^2)}\, \Big((k+2)! (k+1)+(k+1)!\sum_{i=0}^{k-1}(i+1)\Big)\nonumber\\
            & = \frac{1}{2} \lVert f \rVert_{\mathcal{C}^k(\mathbb{S}^2)}\, (k+1)!(k+1) \left(k+1+\frac{k+2}{2}\right)\nonumber\\
            & \leq \frac{3}{4} (k+3)! \, \lVert f \rVert_{\mathcal{C}^k(\mathbb{S}^2)}.
            \label{eq:hb4}
    \end{align}
    This finishes the proof by
    \begin{equation*}
        \big\lVert \tilde{f} \big\rVert_{\mathcal{C}^{k,\alpha}(\mathbb{T}^2)} 
        =\big\lVert \tilde{f}\big\rVert_{\mathcal{C}^k(\mathbb{T}^2)}+\big\lvert \tilde{f} \big\rvert_{\mathcal{C}^{k,\alpha}(\mathbb{T}^2)}\\
        \leq \frac{7}{4}(k+3)! \, \lVert f \rVert_{\mathcal{C}^{k+1}(\mathbb{S}^2)}.
        \qedhere
    \end{equation*}
\end{proof}

\begin{cor}\label{cor:diffBd}
    Let $f \in \mathcal{C}^k(\mathbb{S}^2)$. Then the DFS function $\tilde{f}$ of $f$ is in $\mathcal{C}^k(\mathbb{T}^2)$ and by \eqref{eq:hb4} we have
    \begin{equation*} 
        \big\lVert \tilde{f} \big\rVert_{\mathcal{C}^k(\mathbb{T}^2)}\leq \frac{3}{4}(k+3)! \, \lVert f \rVert_{\mathcal{C}^k(\mathbb{S}^2)}.
    \end{equation*}
\end{cor}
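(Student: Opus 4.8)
The plan is to note that the chain of inequalities \eqref{eq:hb4} established inside the proof of \Cref{thm:diffC} never uses the extra derivative that \Cref{thm:diffC} assumes: the pointwise estimate \eqref{eq:hb1} controls $\lVert D^{\bm\beta}\tilde f\rVert_{\mathcal{C}(\R^2)}$ solely in terms of $\lVert f\rVert_{\mathcal{C}^{\lvert\bm\beta\rvert}(\S^2)}$, so it is available for every multiindex of order at most $k$ as soon as $f\in\mathcal{C}^k(\S^2)$. Thus the corollary amounts to re-reading \eqref{eq:hb4} under the weaker hypothesis.

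First I would fix $f\in\mathcal{C}^k(\S^2)$ and an arbitrary $\bm\beta\in B^2_k$, so that $\lvert\bm\beta\rvert\le k$. Since $\mathcal{C}^k(\S^2)\subseteq\mathcal{C}^{\lvert\bm\beta\rvert}(\S^2)$, the function $f$ admits a $\mathcal{C}^{\lvert\bm\beta\rvert}$-extension $f^*$, and \Cref{lem:faaDi} applies to $f^*\circ\phi$ with $k'=\lvert\bm\beta\rvert$. Repeating the first part of the proof of \Cref{thm:diffC} verbatim then shows that $\tilde f$ is $k$-times continuously differentiable and satisfies the pointwise bound \eqref{eq:hb1} for this $\bm\beta$; as $\bm\beta\in B^2_k$ was arbitrary, \eqref{eq:hb1} holds for all $\bm\beta\in B^2_k$. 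Periodicity of $\tilde f$ on $\T^2$ turns the resulting boundedness of the derivatives into membership in $\mathcal{C}^k(\T^2)$.

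The final step is purely arithmetic: summing \eqref{eq:hb1} over $\bm\beta\in B^2_k$, splitting off the top order $\lvert\bm\beta\rvert=k$, and using that the level set $\{\bm\beta\in B^2_k:\lvert\bm\beta\rvert=i\}$ has $i+1$ elements reproduces exactly the estimate \eqref{eq:hb4}, whose right-hand side is the claimed $\tfrac34(k+3)!\,\lVert f\rVert_{\mathcal{C}^k(\S^2)}$. I expect no genuine obstacle here; the only point requiring care is the bookkeeping observation that each $D^{\bm\beta}\tilde f$ with $\lvert\bm\beta\rvert\le k$ is estimated through a $\mathcal{C}^{\lvert\bm\beta\rvert}$-extension rather than a $\mathcal{C}^{k+1}$-extension, which is what makes the $\mathcal{C}^k$-bound ``one derivative cheaper'' than the full Hölder bound of \Cref{thm:diffC}.
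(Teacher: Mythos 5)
Your argument is correct and is exactly the paper's intended proof: the corollary is stated to follow from \eqref{eq:hb4}, whose derivation via \eqref{eq:hb1} only ever invokes a $\mathcal{C}^{\lvert\bm\beta\rvert}$-extension with $\lvert\bm\beta\rvert\le k$ and hence needs no extra derivative. Your observation that this makes the $\mathcal{C}^k$-bound ``one derivative cheaper'' is precisely the point the authors are relying on.
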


Next, we establish that the DFS method even preserves the Hölder-class of a function, i.e., the weaker assumption of $(k,\alpha)$-Hölder continuity on the sphere already yields \text{${(k,\alpha)}$-Hölder} continuity of the DFS function on the torus.

\begin{thm}\label{thm:hoelC} 
    Let $k \in \NN$, $0<\alpha<1$ and $f \in \mathcal{C}^{k,\alpha}(\mathbb{S}^2)$. Then the DFS function $\tilde{f}$ of $f$ is in $\mathcal{C}^{k,\alpha}(\mathbb{T}^2)$. We have 
    \begin{equation}\label{eq:hoelC1}
        \big\lvert \tilde{f} \big\rvert_{\mathcal{C}^{k,\alpha}(\mathbb{T}^2)}\leq   (k+3)! \, \lVert f \rVert_{\mathcal{C}^{k,\alpha}(\mathbb{S}^2)}
    \end{equation}
    and
    \begin{equation}\label{eq:hoelC2}
        \big\lVert \tilde{f} \big\rVert_{\mathcal{C}^{k,\alpha}(\mathbb{T}^2)}\leq  \frac74 \, (k+3)! \, \lVert f \rVert_{\mathcal{C}^{k,\alpha}(\mathbb{S}^2)}.
    \end{equation}
\end{thm}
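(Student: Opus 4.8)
The plan is to use the decomposition $\norm{\tilde f}_{\mathcal{C}^{k,\alpha}(\T^2)}=\norm{\tilde f}_{\mathcal{C}^k(\T^2)}+\lvert\tilde f\rvert_{\mathcal{C}^{k,\alpha}(\T^2)}$ and to treat the two summands separately. For the first one I exploit that $\mathcal{C}^{k,\alpha}(\S^2)\subset\mathcal{C}^k(\S^2)$ with $\norm{f}_{\mathcal{C}^k(\S^2)}\le\norm{f}_{\mathcal{C}^{k,\alpha}(\S^2)}$, so \Cref{cor:diffBd} (equivalently, the bound \eqref{eq:hb4} from the proof of \Cref{thm:diffC}) immediately yields $\norm{\tilde f}_{\mathcal{C}^k(\T^2)}\le\tfrac34(k+3)!\,\norm{f}_{\mathcal{C}^{k,\alpha}(\S^2)}$. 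Hence the whole statement reduces to the Hölder-seminorm estimate \eqref{eq:hoelC1}; adding the two bounds then produces the factor $\tfrac74$ in \eqref{eq:hoelC2}. The degenerate case $k=0$ is handled directly, since $\lvert\tilde f\rvert_{\mathcal{C}^\alpha(\T^2)}\le\lvert f\rvert_{\mathcal{C}^\alpha(\S^2)}$ follows from the Lipschitz bound $\norm{\phi(\bm x)-\phi(\bm y)}\le\norm{\bm x-\bm y}$ of \Cref{lem:DFSBd}; so I assume $k\ge1$ from here on.

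For \eqref{eq:hoelC1} I fix a $\mathcal{C}^{k,\alpha}$-extension $f^*$ of $f$, a multiindex $\bm\beta\in B^2_k$ with $\lvert\bm\beta\rvert=k$, and points $\bm x\neq\bm y$. By \Cref{lem:faaDi} I expand $D^{\bm\beta}\tilde f=D^{\bm\beta}(f^*\circ\phi)$ into a sum of $n\le2^{-1}(k+2)!$ products $a_i\,b_i$, where $a_i:=(D^{\bm\gamma_i}f^*)\circ\phi$ and $b_i:=\prod_{j=1}^{m_i}D^{\bm\mu_{i,j}}\phi_{l_{i,j}}$, with $\lvert\bm\gamma_i\rvert\le k$ by \eqref{eq:indH2} and $m_i\le k$ by \eqref{eq:indH3}. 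For each $i$ I apply the product–difference identity $a_i(\bm x)b_i(\bm x)-a_i(\bm y)b_i(\bm y)=(a_i(\bm x)-a_i(\bm y))b_i(\bm x)+a_i(\bm y)(b_i(\bm x)-b_i(\bm y))$, estimating the cofactors by $\lvert b_i(\bm x)\rvert\le1$ from \eqref{eq:DFSinf} and $\lvert a_i(\bm y)\rvert\le\lvert f^*\rvert^*_{\mathcal{C}^{k,\alpha}(\S^2)}$ from \Cref{defin:sphere}.

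The key step is bounding the two difference factors. For $b_i$, telescoping the product and using that every $D^{\bm\mu_{i,j}}\phi_{l_{i,j}}$ is bounded by $1$ and Lipschitz with constant $\le1$ (\Cref{lem:DFSBd}) shows that $b_i$ is bounded and Lipschitz with $\lvert b_i\rvert_{\mathrm{Lip}}\le m_i\le k$, so \Cref{prop:lipH} gives $\lvert b_i(\bm x)-b_i(\bm y)\rvert\le\max\{k,2\}\,\norm{\bm x-\bm y}^\alpha$. The genuinely new difficulty, absent from \Cref{thm:diffC}, lies in the factor $a_i(\bm x)-a_i(\bm y)$ when $\lvert\bm\gamma_i\rvert=k$: here $D^{\bm\gamma_i}f^*$ is only $\alpha$-Hölder and no longer Lipschitz, so the mean-value argument of \Cref{thm:diffC} is unavailable. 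Instead I apply the spherical Hölder seminorm of \Cref{defin:sphere} at the points $\phi(\bm x),\phi(\bm y)\in\S^2$ together with $\norm{\phi(\bm x)-\phi(\bm y)}\le\norm{\bm x-\bm y}$ from \eqref{eq:DFSL}, which yields $\lvert a_i(\bm x)-a_i(\bm y)\rvert\le\lvert f^*\rvert^*_{\mathcal{C}^{k,\alpha}(\S^2)}\norm{\bm x-\bm y}^\alpha$. When $\lvert\bm\gamma_i\rvert<k$, the derivative $D^{\bm\gamma_i}f^*$ is still $\mathcal{C}^1$, so $a_i$ is Lipschitz (again via the chain rule and the operator-norm bound $\norm{J\phi}\le1$ computed in \Cref{lem:DFSBd}), and \Cref{prop:lipH} converts this into a Hölder estimate $\lvert a_i(\bm x)-a_i(\bm y)\rvert\le2\,\lvert f^*\rvert^*_{\mathcal{C}^{k,\alpha}(\S^2)}\norm{\bm x-\bm y}^\alpha$; thus both cases fit the same template.

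Finally I assemble the estimates. Combining the cofactor bounds, each product contributes at most $(2+\max\{k,2\})\,\lvert f^*\rvert^*_{\mathcal{C}^{k,\alpha}(\S^2)}\norm{\bm x-\bm y}^\alpha\le(k+3)\,\lvert f^*\rvert^*_{\mathcal{C}^{k,\alpha}(\S^2)}\norm{\bm x-\bm y}^\alpha$, using the elementary inequality $2+\max\{k,2\}\le k+3$ valid for all $k\ge1$. Summing over the $n\le2^{-1}(k+2)!$ terms gives $\lvert D^{\bm\beta}\tilde f(\bm x)-D^{\bm\beta}\tilde f(\bm y)\rvert\le2^{-1}(k+3)!\,\lvert f^*\rvert^*_{\mathcal{C}^{k,\alpha}(\S^2)}\norm{\bm x-\bm y}^\alpha\le(k+3)!\,\lvert f^*\rvert^*_{\mathcal{C}^{k,\alpha}(\S^2)}\norm{\bm x-\bm y}^\alpha$. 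Since this estimate is uniform in $\bm\beta$, $\bm x$, $\bm y$ and the left-hand side is independent of the chosen extension, taking the supremum over $\lvert\bm\beta\rvert=k$ and then the infimum over all $\mathcal{C}^{k,\alpha}$-extensions $f^*$ yields \eqref{eq:hoelC1}, and adding the $\mathcal{C}^k$-bound finishes \eqref{eq:hoelC2}. I expect the bookkeeping of the constant across the two cases $\lvert\bm\gamma_i\rvert=k$ and $\lvert\bm\gamma_i\rvert<k$—in particular verifying that the per-term factor stays within the budget $(k+3)$—to be the most delicate point.
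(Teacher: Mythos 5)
Your proof is correct and follows essentially the same route as the paper's: the same reduction to Corollary \ref{cor:diffBd} for the $\mathcal{C}^k$-part, the same expansion via Lemma \ref{lem:faaDi}, the same product-difference splitting, and the same case distinction between $\lvert\bm{\gamma}_i\rvert=k$ (pull back the spherical Hölder seminorm through the Lipschitz bound on $\phi$) and $\lvert\bm{\gamma}_i\rvert<k$ (use $\mathcal{C}^1$-regularity). Your bookkeeping differs only cosmetically — you invoke Proposition \ref{prop:lipH} directly for the lower-order terms where the paper cites Theorem \ref{thm:diffC} with $k=0$, and you convert the telescoped product difference via Lipschitz rather than via \eqref{eq:DFSH} — which yields the same or slightly sharper constants and still fits within the $(k+3)!$ budget.
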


\begin{proof} 
    \underline{Case $k=0$}:
    Let $f \in \mathcal{C}^\alpha(\mathbb{S}^2)$ and $\bm{x}, \, \bm{y} \in \mathbb{R}^2$. Then
    \begin{equation*}
        \begin{alignedat}{2}
                & \big\lvert \tilde{f}(\bm{x})-\tilde{f}(\bm{y})\big\rvert \ && = \lvert f(\phi(\bm{x}))-f(\phi(\bm{y}))\rvert\\
                & && \leq \lvert f \rvert_{\mathcal{C}^\alpha(\mathbb{S}^2)} \,  \lVert \phi(\bm{x})-\phi(\bm{y})\rVert^\alpha\\
                & &&\leq \lvert f \rvert_{\mathcal{C}^\alpha(\mathbb{S}^2)} \, \lvert \phi \rvert_{\mathrm{Lip}(\mathbb{R}^2,\R^3)}^\alpha \, \lVert \bm{x}-\bm{y} \rVert^\alpha.
        \end{alignedat}
    \end{equation*}
    We apply \eqref{eq:DFSL} to deduce that $\tilde{f}\in \mathcal{C}^\alpha(\mathbb{R}^2)$ with 
    \begin{equation*}
        \big\lvert \tilde{f} \big\rvert_{\mathcal{C}^\alpha(\mathbb{T}^2)}
        \leq \lvert f \rvert_{\mathcal{C}^\alpha(\mathbb{S}^2)},
    \end{equation*}
    which shows \eqref{eq:hoelC1}.
    Furthermore, \eqref{eq:hoelC2} holds since
    \begin{equation*}
        \big\lVert \tilde{f} \big\rVert_{\mathcal{C}^{\alpha}(\mathbb{T}^2)}=  \big\lVert \tilde{f}\big\rVert_{\mathcal{C}(\mathbb{R}^2)}+\big\lvert \tilde{f}\big\rvert_{\mathcal{C}^{\alpha}(\mathbb{R}^2)}\leq \lVert f \rVert_{\mathcal{C}(\mathbb{S}^2)}+ \lvert f \rvert_{\mathcal{C}^{\alpha}(\mathbb{S}^2)}
        \leq \frac74 \, 3! \, \lVert f \rVert_{\mathcal{C}^{\alpha}(\mathbb{S}^2)}.
    \end{equation*}
  
    \underline{Case $k>0$}:
    Let $k \in \mathbb{N}$ and $f \in \mathcal{C}^{k,\alpha}(\mathbb{S}^2)$. Let $f^\ast\in \mathcal{C}^{k,\alpha}(U)$ be a $(k,\alpha)$-Hölder extension of $f$. 
    Furthermore, let $\bm{\beta} \in B^2_k$. We apply \Cref{lem:faaDi} to obtain
    \begin{equation*}
        D^{\bm{\beta}} \tilde{f} = D^{\bm{\beta}} (f^* \circ \phi)= \sum_{i=1}^n (D^{\bm{\gamma}_i}f^*)\circ \phi \cdot \prod_{j=1}^{m_i} D^{\bm{\mu}_{i,j}}\phi_{l_{i,j}}
    \end{equation*}
    with constants from \Cref{eq:indH1,eq:indH2,eq:indH3,eq:indH4,eq:indH5}.
    We order the terms such that, for some $n^\prime \leq n$, we have $\lvert \bm{\gamma}_i\rvert < k$ for $i\in [n^\prime]$ and $\lvert\bm{\gamma}_i\rvert=k$ for $i \in [n]\setminus [n^\prime]$. Since $f^*\in\mathcal{C}^{k,\alpha}(U)$, we know that $D^{\bm{\gamma}_i}f^*$ is continuously differentiable for all $i \in [n^\prime]$. Clearly, $D^{\bm{\gamma}_i}f^*$ is a $\mathcal{C}^1$-extension of the restriction $\restr{(D^{\bm{\gamma}_i}f^*)}{\mathbb{S}^2}$ and hence we have $ D^{\bm{\gamma}_i}f^* \in \mathcal{C}^1(\mathbb{S}^2)$ with 
    \begin{equation*}
        \begin{alignedat}{2}
             & \big\lVert D^{\bm{\gamma}_i}f^*\big\rVert_{\mathcal{C}^1(\mathbb{S}^2)}
             &&\leq \big\lvert D^{\bm{\gamma}_i}f^* \big\rvert_{\mathcal{C}^1(\mathbb{S}^2)}^*=\sum_{\bm{\beta}\in B^3_1}\big\lVert D^{\bm{\beta}}\big(D^{\bm{\gamma}_i}f^*\big)\big\rVert_{\mathcal{C}(\mathbb{S}^2)}\\
             & &&\leq \sum_{\bm{\beta}\in B^3_k} \big\lVert D^{\bm{\beta}} f^*\big\rVert_{\mathcal{C}(\mathbb{S}^2)}=\big\lvert f^* \big\rvert_{\mathcal{C}^{k}(\mathbb{S}^2)}^*\leq \big\lvert f^* \big\rvert_{\mathcal{C}^{k,\alpha}(\mathbb{S}^2)}^*,
        \end{alignedat}
    \end{equation*} 
    for all $i \in [n^\prime]$. We obtain by \eqref{eq:diffC1} that
    \begin{equation}\label{eq:hoelextbd}
        \big\lvert D^{\bm{\gamma}_i}f^* \circ \phi\big\rvert_{\mathcal{C}^\alpha(\mathbb{T}^2)}\leq (0+3)! \, \lVert D^{\bm{\gamma}_i}f \rVert_{\mathcal{C}^1(\mathbb{S}^2)}\leq 6 \, \lvert f^* \rvert_{\mathcal{C}^{k,\alpha}(\mathbb{S}^2)}^*.
    \end{equation}
    
    Let $\bm{x},\bm{y} \in \mathbb{R}^2$. Then, by \eqref{eq:indH6}, we have
    \begin{equation*}
        \begin{alignedat}{2}
            & && \big\lvert D^{\bm{\beta}} \tilde{f} (\bm{x})-D^{\bm{\beta}} \tilde{f}(\bm{y}) \big\rvert \\
            &= \ && \Big\lvert \sum_{i=1}^n (D^{\bm{\gamma}_i}f^*)\circ \phi(\bm{x}) \cdot \prod_{j=1}^{m_i} D^{\bm{\mu}_{i,j}}\phi_{l_{i,j}}(\bm{x}) - \sum_{i=1}^n (D^{\bm{\gamma}_i}f^*)\circ \phi(\bm{y}) \cdot \prod_{j=1}^{m_i} D^{\bm{\mu}_{i,j}}\phi_{l_{i,j}}(\bm{y}) \Big \rvert
        \end{alignedat}
    \end{equation*}
    hence,
    \begin{equation*}
      \begin{alignedat}{2}
        & \big\lvert D^{\bm{\beta}} \tilde{f} (\bm{x})-D^{\bm{\beta}} \tilde{f}(\bm{y}) \big\rvert 
        \leq &&  \underbrace{\sum_{i=1}^n \big\lvert (D^{\bm{\gamma}_i}f^*)\circ \phi(\bm{x}) - (D^{\bm{\gamma}_i}f^*)\circ \phi(\bm{y}) \big\rvert \, \Big\lvert \prod_{j=1}^{m_i} D^{\bm{\mu}_{i,j}}\phi_{l_{i,j}}(\bm{x}) \Big\rvert}_{=:A} \\
        & && +  \underbrace{\sum_{i=1}^n  \big\lvert(D^{\bm{\gamma}_i}f^*)\circ \phi(\bm{y})\big\rvert \, \Big\lvert \prod_{j=1}^{m_i} D^{\bm{\mu}_{i,j}}\phi_{l_{i,j}}(\bm{x})- \prod_{j=1}^{m_i} D^{\bm{\mu}_{i,j}}\phi_{l_{i,j}}(\bm{y})\Big \rvert.}_{=:B}
      \end{alignedat}
    \end{equation*}
    We estimate the first sum as
    \begin{equation*}
        \begin{alignedat}{2}
            A & \underset{\mathrm{\eqref{eq:DFSinf}}}{\leq}  && \sum_{i=1}^n \big \lvert (D^{\bm{\gamma}_i}f^*)\circ \phi(\bm{x}) - (D^{\bm{\gamma}_i}f^*)\circ \phi(\bm{y}) \big \rvert \\
            & \underset{{\eqref{eq:hoelextbd}}}{\leq} && \sum_{i=1}^{n^\prime} 6 \, \lvert f^* \rvert_{\mathcal{C}^{k,\alpha}(\mathbb{S}^2)}^* \, \lVert \bm{x}-\bm{y}\rVert^\alpha + \sum_{i=n^\prime +1}^n \lvert f^*\rvert_{\mathcal{C}^{k,\alpha}(\mathbb{S}^2)}^* \, \lVert \phi(\bm{x}) - \phi(\bm{y}) \rVert^\alpha\\
            & \underset{\mathrm{\eqref{eq:DFSL}}}{\leq}&& \lvert f^* \rvert_{\mathcal{C}^{k,\alpha}(\mathbb{S}^2)}^* \, \Big( \sum_{i=1}^{n^\prime} 6 \, \lVert \bm{x}-\bm{y}\rVert^\alpha + \sum_{i=n^\prime +1}^n \lVert \bm{x} -\bm{y} \rVert^\alpha \Big)\\
            & \underset{\mathrm{\eqref{eq:indH1}}}{\leq} && {3 \,(k+2)! \, \lvert f^* \rvert_{\mathcal{C}^{k,\alpha}(\mathbb{S}^2)}^*} \, \lVert \bm{x} - \bm{y} \rVert^\alpha.
        \end{alignedat}
    \end{equation*}
    By \eqref{eq:Ck-S}, we estimate the second sum as
    \begin{equation*}
        \begin{aligned}
            B={}& \sum_{i=1}^n  \big\lvert(D^{\bm{\gamma}_i}f^*)\circ \phi(\bm{y})\big \rvert \, \Big\lvert \prod_{j=1}^{m_i} D^{\bm{\mu}_{i,j}}\phi_{l_{i,j}}(\bm{x})- \prod_{j=1}^{m_i} D^{\bm{\mu}_{i,j}}\phi_{l_{i,j}}(\bm{y})\Big \rvert\\
            {\leq} & \sum_{i=1}^n \lvert f^* \rvert_{\mathcal{C}^{k,\alpha}(\mathbb{S}^2)}^*  \, \Big\lvert \prod_{j=1}^{m_i} D^{\bm{\mu}_{i,j}}\phi_{l_{i,j}}(\bm{x})- \prod_{j=1}^{m_i} D^{\bm{\mu}_{i,j}}\phi_{l_{i,j}}(\bm{y})\Big \rvert.
        \end{aligned}
    \end{equation*}
  	Using a telescoping sum, we write last equation as
    \begin{multline*}
        B\leq  \sum_{i=1}^n \lvert f^* \rvert_{\mathcal{C}^{k,\alpha}(\mathbb{S}^2)}^* 
        \\\cdot \Big \lvert \sum_{h=1}^{m_i} \big( D^{\bm{\mu}_{i,h}}\phi_{l_{i,h}}(\bm{x})-D^{\bm{\mu}_{i,h}}\phi_{l_{i,h}}(\bm{y}) \big) \prod_{j=1}^{h-1} D^{\bm{\mu}_{i,j}}\phi_{l_{i,j}}(\bm{x}) \prod_{j=h+1}^{m_i} D^{\bm{\mu}_{i,j}}\phi_{l_{i,j}}(\bm{y}) \Big\rvert.
    \end{multline*}
    By \eqref{eq:DFSinf}, we further estimate 
    \begin{align*}
        B \leq{}& \sum_{i=1}^n \sum_{h=1}^{m_i} \lvert f^* \rvert_{\mathcal{C}^{k,\alpha}(\mathbb{S}^2)}^* \, \big \lvert D^{\bm{\mu}_{i,h}}\phi_{l_{i,h}}(\bm{x})-D^{\bm{\mu}_{i,h}}\phi_{l_{i,h}}(\bm{y}) \big\rvert\\
        \leq{} & \sum_{i=1}^n \sum_{h=1}^{m_i} \lvert f^* \rvert_{\mathcal{C}^{k,\alpha}(\mathbb{S}^2)}^* \, \lVert D^{\bm{\mu}_{i,h}}\phi(\bm{x})-D^{\bm{\mu}_{i,h}}\phi(\bm{y}) \rVert.
    \intertext{By \eqref{eq:DFSH} as well as \eqref{eq:indH1} and \eqref{eq:indH3}, we have }
        B \leq{} & \sum_{i=1}^n \sum_{h=1}^{m_i} \lvert f^* \rvert_{\mathcal{C}^{k,\alpha}(\mathbb{S}^2)}^* \, 2 \, \lVert \bm{x}-\bm{y} \rVert^\alpha\\
        \leq{} & {(k+2)! \, k \, \lvert f^* \rvert_{\mathcal{C}^{k,\alpha}(\mathbb{S}^2)}^*} \, \lVert \bm{x}-\bm{y} \rVert^\alpha.
    \end{align*}
    Combining these upper bounds on $A$ and $B$, we obtain
    \begin{equation*}
        \lVert D^{\bm{\beta}} \tilde{f}(\bm{x})-D^{\bm{\beta}} \tilde{f}(\bm{y}) \rVert 
        \leq { (k+3)! \, \lvert f^* \rvert_{\mathcal{C}^{k,\alpha}(\mathbb{S}^2)}^* } \, \lVert \bm{x}-\bm{y} \rVert^\alpha.
    \end{equation*}
    Since the last equation holds independently of the choices of $\bm{x}$, $\bm{y}$, and $\bm{\beta}$, we see that 
    \begin{equation*}
        \big\lvert \tilde{f} \big\rvert_{\mathcal{C}^{k,\alpha}(\mathbb{T}^2)}\leq (k+3)! \, \lvert f^*\rvert_{C^{k,\alpha}(\mathbb{S}^2)}^*.
    \end{equation*}
    By \Cref{defin:sphere}, this bound still holds if we replace $\lvert f^*\rvert_{C^{k,\alpha}(\mathbb{S}^2)}^*$ by $\lVert f\rVert_{C^{k,\alpha}(\mathbb{S}^2)}$ on the right hand side, since the $\mathcal{C}^{k,\alpha}$-extension $f^*$ was chosen arbitrarily, which yields \eqref{eq:hoelC1}.
    Since $f^*$ is a $\mathcal{C}^{k,\alpha}$-extension of $f$ and thus also a $\mathcal{C}^k$-extension of $f$, \Cref{cor:diffBd} in combination with \Cref{defin:sphere} show that
    \begin{equation*}
        \big\lVert \tilde{f} \big\rVert_{\mathcal{C}^k(\mathbb{T}^2)}\leq \frac{3}{4}(k+3)! \, \lVert f \rVert_{\mathcal{C}^k(\mathbb{S}^2)} \leq \frac{3}{4}(k+3)! \, \lvert f^* \rvert_{\mathcal{C}^{k,\alpha}(\mathbb{S}^2)}^*.
    \end{equation*}
    Hence, \eqref{eq:hoelC2} follows with
    \begin{equation*}
        \big\lVert \tilde{f} \big\rVert_{\mathcal{C}^{k,\alpha}(\mathbb{T}^2)}=\big\lVert \tilde{f}\big\rVert_{\mathcal{C}^k(\mathbb{T}^2)}+\big\lvert \tilde{f}\big\rvert_{\mathcal{C}^{k,\alpha}(\mathbb{T}^2)}\leq \left({\frac{3}{4}+1}\right)\, (k+3)! \, \lVert f \rVert_{\mathcal{C}^{k,\alpha}(\mathbb{S}^2)}.\qedhere
    \end{equation*}
\end{proof}
\section{Fourier series of Hölder continuous DFS functions}\label{chap:3}

In this section, we combine our findings from \Cref{thm:diffC,thm:hoelC}
with results from multi-dimensional Fourier analysis to obtain convergence results on the Fourier series of DFS functions.

\subsection{Fourier series of DFS functions}

We want to approximate spherical functions via the Fourier series of their DFS functions.
We first recall Fourier series on the torus $\T^2$.
We denote by $L_2(\mathbb{T}^2)$ the space of square-integrable functions $f\colon\mathbb{T}^2 \to \mathbb{C}$ with the norm
$
  \lVert f\rVert_{L_2(\mathbb{T}^2)}^2
  :=\int_{\T^2} \lvert f(\bm{x})\rvert^2 \,\mathrm{d}\bm{x}.
$

For $c_{\bm{n}}\in\C$, $\bm{n}\in \mathbb{Z}^2$, we call the series $\sum_{\bm{n}\in \mathbb{Z}^2} c_{\bm{n}}$ {convergent} whenever for all expanding sequences $\{\Omega_h\}_{h\in \mathbb{N}}$ of bounded sets exhausting $\mathbb{Z}^2$ the partial sums $\sum_{\bm{n} \in \Omega_h} c_{\bm{n}}$ converge absolutely as $h \to \infty$, cf.\ \cite[p.~6]{comHarAna}.

\begin{defin}\label{defin:F_torus}
    Let $g \in L_2 (\mathbb{T}^2)$ and $\bm{n} \in \mathbb{Z}^2$. We define the $\bm{n}$-th \emph{Fourier coefficient} of $g$ by
    \begin{equation*}
        c_{\bm{n}}(g):=(2\pi)^{-2} \int_{\mathbb{T}^2} g(\bm{x})\, \e^{-\i \langle \bm{n},\bm{x}\rangle}\, \mathrm{d}\bm{x}.
    \end{equation*}
    Let $\Omega_h$, $h\in \mathbb{N}$, be an expanding sequence of bounded sets which exhausts $\mathbb{Z}^2$. 
    We define the $h$-th \emph{partial Fourier sum} of $g$ by
    \begin{equation} \label{eq:F_torus}
        F_{\Omega_h} g(\bm{x}):=\sum_{\bm{n} \in \Omega_h} c_{\bm{n}}(g)\, \e^{\i \langle \bm{n},\bm{x} \rangle},\qquad \bm{x} \in \mathbb{T}^2,
    \end{equation}
    and the \emph{Fourier series}
    $ F g := \lim_{h\to\infty} F_{\Omega_h} g $.
\end{defin}

The functions $\e^{\i\inn{\bm{n},\cdot}}$, $\bm{n}\in\Z^2$, form an orthogonal basis of the Hilbert space $L_2(\T^2)$.
Hence, we have $Ff=f$ for all $f\in L_2(\T^2)$.
The Fourier sum \eqref{eq:F_torus} can be evaluated efficiently with the fast Fourier transform (FFT). 

\begin{rem}
On the sphere $\S^2$,
the space $L_2(\S^2)$ consists of all square-integrable functions $f \colon\mathbb{S}^2 \to \mathbb{C}$ with respect to
\begin{equation}\label{eq:intS}
  \int_{\mathbb{S}^2} f(\bm{\xi})\,\mathrm{d} \bm{\xi}
  = \int_{-\pi}^{\pi}  \int_{0}^\pi \tilde{f}(\lambda,\theta) \, \sin(\theta) \,\mathrm{d}\theta \,\mathrm{d}\lambda.
\end{equation}
An orthogonal basis of $L_2(\S^2)$ is given by the \emph{spherical harmonics} 
$$
Y_n^k(\phi(\lambda,\theta))
= P_n^k(\cos\theta)\, \e^{\i k\lambda},
\qquad \lambda \in [-\pi,\pi),\ \theta\in[0,\pi],
$$
where $P_n^k$ is the associated Legendre function of degree $n$ and order $k$,
see \cite[sec.~5]{michel}.
Any spherical function $f\in L_2(\S^2)$ can be written as \emph{spherical Fourier series}
\begin{equation}\label{eq:sh-series}
  f(\zb\xi) = \sum_{n=0}^{\infty} \sum_{k=-n}^{n} \hat f_{n,k}\,  Y_n^k(\zb\xi),
  \qquad \zb\xi\in\S^2,
\end{equation}
with some coefficients $\hat f_{n,k}\in\C$.
Contrary to \eqref{eq:F_torus},
the sums over $n$ and $k$ in \eqref{eq:sh-series} cannot be separated
because the associated Legendre functions $P_n^k$ depend on both summation indices,
which makes the computation more time and memory consuming.
There are fast spherical Fourier algorithms for the computation of \eqref{eq:sh-series}, see, e.g., \cite{drhe,kunis,Mo99}.
However, 
they are not as fast as FFTs of a comparable size 
and they suffer from the problem that
associated Legendre functions $P_n^n$ of order $n\gtrapprox 1500$ can be too small to be representable in double precision, cf.\ \cite{Sch13}.\qed
\end{rem}

The DFS method
represents a spherical function~$f$ 
via the Fourier series of its DFS function $\tilde f\colon\S^2\to\C$, i.e.,
\begin{equation}\label{eq:DFSseries1}
  F_{\Omega_h}\tilde f(\bm{x}) = 
  \sum_{\bm{n}\in\Omega_h} c_{\bm{n}}(\tilde f)\, \e^{\i \inn{\bm{n},\bm x}}
  ,\qquad \bm x\in\T^2.
\end{equation} 
Let $\bm{n}=(n_1,n_2)\in\Z^2$ and $\bm{x}=(x_1,x_2)\in\T^2$.
We denote by 
$$
M(n_1,n_2) := (n_1,-n_2)
$$
the reflection in the second component.
We have
$$
\e^{\i \langle\bm{n}, (x_1+\pi,-x_2)\rangle}
=(-1)^{n_1}\, \e^{\i \langle M(\bm{n}), (x_1,x_2)\rangle}.
$$
Hence, the basis functions $\e^{\i \langle\bm{n},\cdot\rangle}$ on the torus $\T^2$ are not BMC functions \eqref{eq:BMC2} if $n_2\neq 0$
and thus cannot be directly transferred to the sphere.
However, it follows that that
\begin{equation} \label{eq:e}
  e_{\bm{n}}(\bm{x})
  :=  
  \begin{cases}
    \e^{\i \langle\bm{n}, \bm{x}\rangle} + (-1)^{n_1} \e^{\i \langle M(\bm{n}), \bm{x}\rangle} ,
    & n_2\neq 0,\\
    \e^{\i \langle\bm{n}, \bm{x}\rangle}, & n_2=0,
  \end{cases}
\end{equation}
is a BMC function.
We denote by $\phi^{-1}$ the well-defined inverse of the longitude-latitude transform $\phi$ on $[-\pi,\pi)\times(0,\pi)\cup\{(0,0),(0,\pi)\}$.
For $\bm{n}\in\Z\times\NN$, we define the basis functions 
$$
b_{\bm{n}}(\bm{\xi}) 
\coloneqq e_{\bm{n}}(\phi^{-1}(\bm{\xi}))
,\qquad 
\bm{\xi}\in \S^2.
$$
Since $e_{\bm{n}}$ is a BMC function, we have 
$
\tilde b_{\bm{n}}(\bm{x}) = e_{\bm{n}}(\bm{x})
$
for all 
$\bm{x}\in\T^2$ with $x_2\neq m\pi$, $m\in\Z$.
This motivates the following definition of an analogue of the Fourier series for the DFS method.

\begin{defin}\label{defin:F_sphere}
  Let $f\colon\mathbb{S}^2\to\mathbb{C}$ with the associated DFS function $\tilde{f}\in L_2(\mathbb{T}^2)$,
  and let $\{\Omega_h\}_{h\in \mathbb{N}}$ be an expanding sequence of bounded sets which exhausts $\mathbb{Z}\times\NN$. For $h \in \mathbb{N}$, we define the $h$-th \emph{partial DFS Fourier sum} of $f$ by
  \begin{equation*}
    S_{\Omega_h} f(\bm{\xi}):=
    \sum_{\bm{n}\in \Omega_h} c_{\bm{n}}(\tilde{f})\, b_{\bm{n}}(\zb\xi)
    ,\qquad \zb\xi\in\S^2,
  \end{equation*}
  and the \emph{DFS Fourier series} of $f$ by
  \begin{equation*}
    S f(\bm{\xi}):= \lim_{h \to \infty} S_{\Omega_h}f(\bm{\xi})
    ,\qquad \bm{\xi}\in\S^2.
  \end{equation*}
\end{defin}

The connection with the classical Fourier series is shown in the following theorem. 
We denote by $\tilde L_2(\S^2)$ the weighted space of measurable functions $f\colon\S^2\to\C$ with finite norm
$$
\norm{f}_{\tilde L_2(\S^2)}^2
:= \int_{\S^2} \abs{f(\bm{\xi})}^2 \frac{1}{\sqrt{1-\xi_3^2}} \d\bm{\xi}.
$$

\begin{thm} \label{thm:F}
  Let $f\in\tilde L_2(\S^2)$.
  Then the Fourier coefficients of its DFS function $\tilde f\in L_2(\T^2)$
  satisfy 
  $$
  c_{\bm{n}}(\tilde f) = (-1)^{n_1} c_{M(\bm{n})}(\tilde f)
  ,\qquad \bm{n}\in\Z^2.
  $$
  For $\Omega\in\Z\times\NN$, we set $\tilde\Omega := \Omega \cup M(\Omega) \subset\Z^2$.
  Then we have
  $$
  S_\Omega f (\bm\xi)
  =F_{\tilde\Omega} \tilde f (\phi^{-1}(\bm\xi)) 
  =  \sum_{\bm{n}\in\Omega} c_{\bm{n}}(\tilde f)\, e_{\bm{n}}(\phi^{-1}(\bm\xi))
  ,\qquad \bm\xi\in\S^2.
  $$
  The set $\{ b_{\bm{n}} \mid \bm{n}\in\Z\times\NN \}$ is an orthogonal basis of $\tilde L_2(\S^2)$.
\end{thm}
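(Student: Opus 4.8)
The plan is to prove the three assertions in turn, each feeding into the next.

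\textbf{Symmetry of the coefficients.} I would obtain $c_{\bm n}(\tilde f) = (-1)^{n_1} c_{M(\bm n)}(\tilde f)$ directly from the defining integral. In $c_{\bm n}(\tilde f) = (2\pi)^{-2}\int_{\T^2}\tilde f(\bm x)\,\e^{-\i\inn{\bm n,\bm x}}\d\bm x$ I substitute $\bm x = (y_1-\pi,-y_2)$. The Jacobian has modulus one, the BMC relation \eqref{eq:BMC2} gives $\tilde f(y_1-\pi,-y_2)=\tilde f(\bm y)$, and the exponent transforms as $\inn{\bm n,\bm x} = \inn{M(\bm n),\bm y} - n_1\pi$, which produces the factor $(-1)^{n_1}$ together with the reflected index $M(\bm n)$. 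Note this argument uses only that $\tilde f$ is BMC, so it applies to any BMC function in $L_2(\T^2)$.

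\textbf{The identity $S_\Omega f = F_{\tilde\Omega}\tilde f\circ\phi^{-1}$.} With the symmetry in hand, I would split $\tilde\Omega = \Omega\cup M(\Omega)$ into the disjoint pieces $\{n_2=0\}$ and $\{n_2>0\}$ (both contained in $\Omega$) together with $\{n_2<0\}=M(\{n_2>0\}\cap\Omega)$. Re-indexing the last piece by $\bm n\mapsto M(\bm n)$ and inserting $c_{M(\bm n)}(\tilde f)=(-1)^{n_1}c_{\bm n}(\tilde f)$ lets me merge the $n_2>0$ and $n_2<0$ contributions into $c_{\bm n}(\tilde f)\bigl(\e^{\i\inn{\bm n,\cdot}}+(-1)^{n_1}\e^{\i\inn{M(\bm n),\cdot}}\bigr)=c_{\bm n}(\tilde f)\,e_{\bm n}$, matching \eqref{eq:e}; the $n_2=0$ terms already equal $c_{\bm n}(\tilde f)\,e_{\bm n}$. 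Evaluating at $\bm x=\phi^{-1}(\bm\xi)$ and recalling $b_{\bm n}=e_{\bm n}\circ\phi^{-1}$ gives the claim.

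\textbf{Orthogonal basis.} The structural fact I would isolate first is that the DFS map is, up to a constant, an isometry onto the BMC subspace of $L_2(\T^2)$: inserting \eqref{eq:intS} and using that the weight $1/\sqrt{1-\xi_3^2}$ equals $1/\sin\theta$ cancels the surface Jacobian $\sin\theta$, so $\inn{f,g}_{\tilde L_2(\S^2)}=\int_{-\pi}^{\pi}\int_0^{\pi}\tilde f\,\overline{\tilde g}\d\theta\d\lambda$; since $\tilde f\,\overline{\tilde g}$ is again BMC, the lower half-torus contributes equally and $\inn{\tilde f,\tilde g}_{L_2(\T^2)}=2\inn{f,g}_{\tilde L_2(\S^2)}$. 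Because $\tilde b_{\bm n}=e_{\bm n}$ up to a null set, orthogonality of the $b_{\bm n}$ reduces to computing $\inn{e_{\bm n},e_{\bm m}}_{L_2(\T^2)}$, which I expand into at most four exponential inner products; as $M$ keeps every cross term off the diagonal, the value is $0$ whenever $\bm n\neq\bm m$. For completeness I would take $f\in\tilde L_2(\S^2)$ orthogonal to every $b_{\bm n}$ and show $f=0$: through the isometry, $\inn{\tilde f,e_{\bm n}}_{L_2(\T^2)}=0$ for all $\bm n\in\Z\times\NN$, which evaluates to $(2\pi)^2 c_{\bm n}(\tilde f)=0$ for $n_2=0$ and to $(2\pi)^2\bigl(c_{\bm n}(\tilde f)+(-1)^{n_1}c_{M(\bm n)}(\tilde f)\bigr)=2(2\pi)^2 c_{\bm n}(\tilde f)=0$ for $n_2>0$ by the first part. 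Hence $c_{\bm n}(\tilde f)=0$ for all $n_2\ge 0$, the symmetry propagates this to $n_2<0$, so all Fourier coefficients of $\tilde f$ vanish and $\tilde f=0$ by completeness of $\{\e^{\i\inn{\bm n,\cdot}}\}$ in $L_2(\T^2)$. I expect the main obstacle to be setting up this weighted isometry correctly, namely verifying that the half-torus integral together with the BMC symmetry reproduces the full $L_2(\T^2)$ inner product up to the factor $2$; once that is in place, orthogonality and completeness follow from the completeness of the torus exponentials and the reflection symmetry of the coefficients.
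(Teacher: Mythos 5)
Your proof of the coefficient symmetry and of the identity $S_\Omega f=F_{\tilde\Omega}\tilde f\circ\phi^{-1}$ is correct and follows essentially the same route as the paper: the substitution $(x_1,x_2)\mapsto(x_1+\pi,-x_2)$ combined with the BMC relation \eqref{eq:BMC2} for the first claim, and the split of $\tilde\Omega$ into the $n_2=0$, $n_2>0$ and reflected $n_2<0$ pieces, merged via \eqref{eq:e}, for the second. Where you genuinely go beyond the paper is the third assertion: the paper's proof ends after the sum-splitting computation and never argues that $\{b_{\bm n}\mid\bm n\in\Z\times\NN\}$ is an orthogonal basis of $\tilde L_2(\S^2)$. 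Your argument for it is sound and is the natural one: the identity $\inn{\tilde f,\tilde g}_{L_2(\T^2)}=2\inn{f,g}_{\tilde L_2(\S^2)}$ (the weight $1/\sin\theta$ cancelling the surface Jacobian on the half-torus, the BMC symmetry doubling it to the full torus) reduces orthogonality to the observation that for distinct $\bm n,\bm m\in\Z\times\NN$ none of the four frequencies $\bm n,M(\bm n),\bm m,M(\bm m)$ coincide, and reduces completeness to the vanishing of all Fourier coefficients of $\tilde f$ via the reflection symmetry. This buys a self-contained justification of a claim the paper asserts without proof; the only point worth making explicit is that $\tilde f=0$ a.e.\ on $\T^2$ forces $f=0$ a.e.\ on $\S^2$, which is immediate since $\phi$ pushes the (weighted) spherical measure forward to a constant multiple of Lebesgue measure on the half-torus.
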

\begin{proof}
  The fact that $\tilde{f}\in L_2(\T^2)$ follows by the definition of $\phi$ and the spherical measure \eqref{eq:intS}.
  Let $\bm{n}\in\Z$.
  By \eqref{eq:BMC}, we have
  \begin{align*}
    c_{\bm{n}}(\tilde f)
    &= \int_{\T^2} \tilde f(x_1,x_2)\, \e^{\i (n_1x_1+n_2x_2)} \d\bm{x}
    \\&
    = (-1)^{n_1} \int_{\T^2} \tilde f(x_1+\pi,-x_2)\, \e^{\i (n_1(x_1+\pi)+n_2x_2)} \d\bm{x}.
  \end{align*}
  With the substitution $\T^2\ni (x_1,x_2)\mapsto (x_1+\pi,-x_2)\in\T^2$, we obtain
  \begin{equation*}
    c_{\bm{n}}(\tilde f)
    = (-1)^{n_1} \int_{\T^2} \tilde f(x_1,x_2)\, \e^{\i (n_1x_1-n_2x_2)} \d\bm{x}
    = (-1)^{n_1} c_{M(\bm{n})}(\tilde f).
  \end{equation*}
  Let $\bm x\in\T^2$.
  Then, we can split the Fourier sum of $\tilde f$ and obtain
  by \eqref{eq:e} that
  \begin{align*}
    F_{\tilde\Omega} \tilde f(\bm{x})
    ={}& \sum_{\bm{n}\in \tilde\Omega} c_{\bm{n}} (\tilde f)\, \e^{\i \langle \bm{n}, \bm{x}\rangle}
    \\
    ={}& \sum_{\substack{\bm{n}\in \Omega,\, n_2\neq0}} c_{\bm{n}} (\tilde f) \left(\e^{\i \langle\bm{n}, \bm{x}\rangle} 
    + (-1)^{n_1} \e^{\i \langle M(\bm{n}), \bm{x}\rangle} \right)
    + \sum_{\substack{\bm{n}\in \Omega,\, n_2=0}} c_{\bm{n}} (\tilde f)\, \e^{\i \langle \bm{n}, \bm{x}\rangle}
    \\
    ={}&
    \sum_{\bm{n}\in\Omega} c_{\bm{n}}(\tilde f)\, e_{\bm{n}} (\bm{x})
    .\qedhere
  \end{align*}
\end{proof}

\subsection{Convergence of the Fourier series}

We prove convergence results on the DFS Fourier series of \Cref{defin:F_sphere}. 
To simplify the notation, we set $\mathcal{C}^{k,1}(\S^2) := \mathcal{C}^{k+1}(\S^2)$ for $k\in\NN$, and the same for $\S^2$ replaced by~$\T^2$.

\begin{lem}\label{lem:bSeries}
    Let $k \in \NN, \ 0<\alpha \leq 1$, and $f\in \mathcal{C}^{k,\alpha}(\S^2)$. 
    Then, for all $b \in \mathbb{R}$ with $b > 2 / ({1+k+\alpha})$, the series
    \begin{equation}\label{eq:bSeries}
        \sum_{\bm{n} \in \mathbb{Z}^2} \big\lvert c_{\bm{n}}(\tilde{f})\big\rvert^b
    \end{equation}
    converges, where $c_{\bm{n}}(\tilde{f})$ denotes the Fourier coefficients of the DFS function $\tilde f$ of $f$.
\end{lem}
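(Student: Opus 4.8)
The plan is to transfer the problem to the torus and then run a Bernstein-type summability argument. First I would note that by \Cref{thm:hoelC} the DFS function $\tilde f$ lies in $\mathcal{C}^{k,\alpha}(\T^2)$ whenever $0<\alpha<1$, while for $\alpha=1$ (that is, $f\in\mathcal{C}^{k+1}(\S^2)$) \Cref{thm:diffC} only gives $\tilde f\in\mathcal{C}^{k,\alpha'}(\T^2)$ for every $\alpha'<1$; since $2/(1+k+\alpha')\to 2/(k+2)$ as $\alpha'\to1^-$, the case $\alpha=1$ reduces to the case $\alpha'<1$ by choosing $\alpha'$ close enough to $1$. Hence it suffices to prove: if $g\in\mathcal{C}^{k,\alpha}(\T^2)$ with $0<\alpha<1$, then $\sum_{\bm n\in\Z^2}\abs{c_{\bm n}(g)}^b<\infty$ for $b>2/(1+k+\alpha)$. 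The case $b\ge2$ is immediate, since $g\in L_2(\T^2)$ gives $\sum_{\bm n}\abs{c_{\bm n}(g)}^2<\infty$ by Parseval and $\ell^2\subset\ell^b$; so I would concentrate on $b<2$.

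The core is a dyadic block estimate in $\ell^2$. Setting $A_j:=\{\bm n\in\Z^2: 2^j\le\norm{\bm n}<2^{j+1}\}$, I would establish $\sum_{\bm n\in A_j}\abs{c_{\bm n}(g)}^2 \le C\,\abs{g}_{\mathcal{C}^{k,\alpha}(\T^2)}^2\,2^{-2j(k+\alpha)}$. To this end, for a shift $\bm h$ I apply Parseval to the finite difference $\Delta_{\bm h}D^{\bm\beta}g(\bm x)=D^{\bm\beta}g(\bm x+\bm h)-D^{\bm\beta}g(\bm x)$, using $\widehat{D^{\bm\beta}g}(\bm n)=(\i\bm n)^{\bm\beta}c_{\bm n}(g)$ and $\norm{\Delta_{\bm h}D^{\bm\beta}g}_\infty\le\abs{g}_{\mathcal{C}^{k,\alpha}(\T^2)}\norm{\bm h}^\alpha$, which yields
\[
\sum_{\bm n\in\Z^2}\abs{\e^{\i\inn{\bm n,\bm h}}-1}^2\,\abs{(\i\bm n)^{\bm\beta}}^2\,\abs{c_{\bm n}(g)}^2 \le \abs{g}_{\mathcal{C}^{k,\alpha}(\T^2)}^2\,\norm{\bm h}^{2\alpha}
\]
for $\abs{\bm\beta}=k$. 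Summing over $\bm\beta$ with the multinomial weights and using $\sum_{\abs{\bm\beta}=k}\frac{k!}{\bm\beta!}\abs{(\i\bm n)^{\bm\beta}}^2=\norm{\bm n}^{2k}$ produces the factor $\norm{\bm n}^{2k}\ge2^{2jk}$ on $A_j$. The remaining difficulty is that a single shift need not make $\abs{\e^{\i\inn{\bm n,\bm h}}-1}$ large for all $\bm n\in A_j$; I would remedy this by taking the two coordinate shifts $\bm h=(t,0)$ and $\bm h=(0,t)$ with $t=c\,2^{-j}$ and $c$ a fixed small constant, so that for every $\bm n\in A_j$ the shift along its dominant coordinate (with $\abs{n_\ell}\ge\norm{\bm n}/\sqrt2$) keeps the phase $\abs{n_\ell}t$ inside a fixed subinterval of $(0,\pi)$ and hence $\abs{\e^{\i\inn{\bm n,\bm h}}-1}\ge c_0>0$. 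Combining these ingredients and dividing by $2^{2jk}$ gives the block estimate.

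Finally I would pass from $\ell^2$ to $\ell^b$ blockwise. For $b<2$, Hölder's inequality with exponents $2/b$ and $2/(2-b)$ gives $\sum_{\bm n\in A_j}\abs{c_{\bm n}(g)}^b \le \abs{A_j}^{1-b/2}\bigl(\sum_{\bm n\in A_j}\abs{c_{\bm n}(g)}^2\bigr)^{b/2}$, and since $\abs{A_j}\simeq 2^{2j}$ the block estimate yields
\[
\sum_{\bm n\in A_j}\abs{c_{\bm n}(g)}^b \le C'\,2^{\,j\left(2-b(1+k+\alpha)\right)}.
\]
Summing the geometric series over $j\ge0$ (the finitely many frequencies with $\norm{\bm n}<1$ contribute a harmless finite term) converges precisely when $2-b(1+k+\alpha)<0$, i.e.\ $b>2/(1+k+\alpha)$, which is exactly the claimed threshold.

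The main obstacle is the block $\ell^2$ estimate, and inside it the uniform lower bound on $\abs{\e^{\i\inn{\bm n,\bm h}}-1}$ over the whole annulus $A_j$: this is what forces the use of several shift directions and the careful calibration $\norm{\bm h}\simeq 2^{-j}$, so that the dominant coordinate of each $\bm n$ produces a phase bounded away from the integer multiples of $2\pi$. The rest is bookkeeping with Parseval, the multinomial identity, and a single application of Hölder's inequality.
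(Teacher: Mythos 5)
Your proposal is correct, and its skeleton coincides with the paper's: both first transfer the problem to the torus via \Cref{thm:hoelC} (using \Cref{thm:diffC} and the substitution $\alpha\mapsto 1-\varepsilon$ to handle $\alpha=1$), and then invoke the fact that $g\in\mathcal{C}^{k,\alpha}(\T^2)$ implies $\sum_{\bm n}\abs{c_{\bm n}(g)}^b<\infty$ for $b>2/(1+k+\alpha)$. The difference is that the paper treats this last implication as a black box, citing the two-dimensional Bernstein--Sz\'asz-type theorem from the harmonic-analysis literature, whereas you reprove it from scratch: Parseval applied to finite differences of the order-$k$ derivatives, the multinomial identity $\sum_{\abs{\bm\beta}=k}\tfrac{k!}{\bm\beta!}\abs{(\i\bm n)^{\bm\beta}}^2=\norm{\bm n}^{2k}$, the two coordinate shifts of length $\asymp 2^{-j}$ to get a uniform lower bound on $\abs{\e^{\i\inn{\bm n,\bm h}}-1}$ over each dyadic annulus, and H\"older's inequality with $\abs{A_j}\asymp 2^{2j}$ to pass from $\ell^2$ to $\ell^b$. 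All of these steps check out (including the endpoint bookkeeping: the exponent $2-b(1+k+\alpha)$ is negative exactly on the claimed range, and the $b\ge2$ case follows from Parseval alone). What the paper's citation buys is brevity; what your version buys is a self-contained argument whose dyadic block estimate is, in $\ell^2$ form, precisely the $\ell^1$ block estimate the paper later imports for \Cref{thm:hoelSrate}, so your machinery is consistent with the rest of the paper and could in principle replace both citations.
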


\begin{proof}
  Let $\alpha<1$.
  By \Cref{thm:hoelC}, we have $\tilde{f} \in \mathcal{C}^{k,\alpha}(\mathbb{T}^2)$.
  By \cite[p.~87]{comHarAna}, the fact that $\tilde{f} \in \mathcal{C}^{k,\alpha}(\mathbb{T}^2)$ directly implies the convergence of \eqref{eq:bSeries} for all $b> 2/({1+k+\alpha})$. 
  For $\alpha=1$, we choose $\varepsilon>0$ such that $b> {2}/({1+k+(1-\varepsilon)})$. 
  Then, \Cref{thm:diffC} shows that $\tilde{f}\in \mathcal{C}^{k,1-\varepsilon}(\mathbb{T}^2)$, which implies the convergence of \eqref{eq:bSeries} as above.
\end{proof}

\begin{thm}\label{thm:hoelScon}
    Let $k \in \mathbb{N}$, $0 < \alpha \leq 1$, and $f\in \mathcal{C}^{k,\alpha}(\mathbb{S}^2)$. 
    Then the Fourier series $F \tilde{f}$ converges to the DFS function $\tilde{f}$ uniformly on $\mathbb{T}^2$
    and, for $\Omega\subset\Z^2$, we have
    \begin{equation*}
        \big\lVert \tilde{f} - F_\Omega \tilde{f} \big\rVert_{\mathcal{C}(\mathbb{T}^2)} 
        \leq \sum_{\bm{n} \in \mathbb{Z}^2\setminus \Omega} \big\lvert c_{\bm{n}}(\tilde{f}) \big\rvert.
    \end{equation*}
  Further, the DFS Fourier series $Sf$ converges to $f$ uniformly on $\mathbb{S}^2$
  and for ${\Omega\in\Z\times\NN}$, we have
  \begin{equation*}
  \lVert f - S_{\Omega}f  \rVert_{\mathcal{C}(\mathbb{S}^2)} \leq \sum_{\bm{n} \in \mathbb{Z}^2\setminus \tilde \Omega} \big\lvert c_{\bm{n}}(\tilde{f}) \big\rvert.
  \end{equation*} 
\end{thm}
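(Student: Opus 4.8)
The plan is to reduce both assertions to a single fact---the absolute summability of the Fourier coefficients of $\tilde f$---and then transport the resulting uniform estimate from the torus to the sphere by means of \Cref{thm:F}. The key observation is that the exponent $b=1$ is admissible in \Cref{lem:bSeries}: since $k\in\N$ and $\alpha>0$ give $1+k+\alpha>2$, we have $1>2/(1+k+\alpha)$, so the series $\sum_{\bm n\in\Z^2}\lvert c_{\bm n}(\tilde f)\rvert$ converges. This is exactly the convergence notion of \Cref{defin:F_torus}. By the Weierstrass $M$-test (each summand $c_{\bm n}(\tilde f)\,\e^{\i\inn{\bm n,\cdot}}$ has modulus $\lvert c_{\bm n}(\tilde f)\rvert$) the series $F\tilde f$ converges uniformly on $\T^2$, and its limit $g$ is continuous. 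Because $\tilde f\in\mathcal{C}^{k,\alpha}(\T^2)\subset\mathcal{C}(\T^2)$ by \Cref{thm:hoelC} and $g$ has the same Fourier coefficients $c_{\bm n}(\tilde f)$, uniqueness of Fourier coefficients forces $g=\tilde f$; hence $F\tilde f=\tilde f$ uniformly.

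Next I would establish the error bound on the torus. Using $F\tilde f=\tilde f$, the tail of the series and the triangle inequality give, for every $\bm x\in\T^2$,
\[
\bigl\lvert \tilde f(\bm x)-F_\Omega\tilde f(\bm x)\bigr\rvert
=\Bigl\lvert \sum_{\bm n\in\Z^2\setminus\Omega} c_{\bm n}(\tilde f)\,\e^{\i\inn{\bm n,\bm x}}\Bigr\rvert
\le \sum_{\bm n\in\Z^2\setminus\Omega}\lvert c_{\bm n}(\tilde f)\rvert,
\]
and taking the supremum over $\bm x$ yields the first claim. The right-hand side is the tail of a convergent series, so it tends to $0$ along any exhausting sequence, confirming uniform convergence.

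Finally I would transfer everything to the sphere. Since $\tilde f=f\circ\phi$, we have $f=\tilde f\circ\phi^{-1}$ on $\S^2$ wherever $\phi^{-1}$ is defined, while \Cref{thm:F} gives $S_\Omega f=F_{\tilde\Omega}\tilde f\circ\phi^{-1}$; hence $f-S_\Omega f=(\tilde f-F_{\tilde\Omega}\tilde f)\circ\phi^{-1}$. As $\phi^{-1}$ maps $\S^2$ into $\T^2$, the supremum over $\bm\xi\in\S^2$ is dominated by $\lVert\tilde f-F_{\tilde\Omega}\tilde f\rVert_{\mathcal{C}(\T^2)}$, which by the torus bound (with $\Omega$ replaced by $\tilde\Omega$) is at most $\sum_{\bm n\in\Z^2\setminus\tilde\Omega}\lvert c_{\bm n}(\tilde f)\rvert$. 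Moreover, when $\Omega$ exhausts $\Z\times\NN$ the reflected set $\tilde\Omega=\Omega\cup M(\Omega)$ exhausts $\Z^2$, so the right-hand side tends to $0$ and $S_\Omega f\to f$ uniformly on $\S^2$.

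The argument is essentially routine once $b=1$ is seen to be admissible; the only points requiring care are the identification of the uniform limit with $\tilde f$ itself (rather than merely with some continuous function), handled by uniqueness of Fourier coefficients above, and the behaviour at the two poles, where $\phi^{-1}$ is not single-valued. The latter is harmless: $S_\Omega f$ is a finite combination of the basis functions $b_{\bm n}=e_{\bm n}\circ\phi^{-1}$, which are genuine continuous functions on all of $\S^2$ since $e_{\bm n}$ is a BMC function, so $f-S_\Omega f$ is continuous and its supremum over $\S^2$ coincides with its supremum over the set where $\phi^{-1}$ is defined.
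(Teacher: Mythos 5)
Your proposal is correct and follows essentially the same route as the paper: admissibility of $b=1$ in \Cref{lem:bSeries}, the resulting absolute summability giving uniform convergence of $F\tilde f$ to $\tilde f$, the tail estimate by the triangle inequality, and the transfer to $\S^2$ via \Cref{thm:F}. The only difference is cosmetic: the paper cites an external theorem for the identification of the uniform limit with $\tilde f$, whereas you re-derive it via the Weierstrass $M$-test and uniqueness of Fourier coefficients, and you add a (harmless, slightly overstated---continuity of $b_{\bm n}$ at the poles would need the BMC-1 rather than the BMC property, though the sup-norm bound is unaffected) remark about the poles.
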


\begin{proof}
    By \Cref{lem:bSeries} with $b=1>2/(2 +\alpha)\geq 2/(1+k+\alpha)$, 
    the series
    $
      \sum_{\bm{n} \in \mathbb{Z}^2} \big\lvert c_{\bm{n}}(\tilde{f}) \big\rvert 
    $
    converges.
    By \cite[Theorem 1.37]{numFou}, it follows that the Fourier series $F\tilde f$ converges uniformly to
    \begin{equation*}
        \tilde{f}(\bm{x})=\sum_{\bm{n}\in \mathbb{Z}^2} c_{\bm{n}}(\tilde{f})\, \e^{\i \langle \bm{n},\bm{x} \rangle}
        ,\qquad \bm{x}\in\T^2.
    \end{equation*}
    Then, for all $\bm{x} \in \mathbb{T}^2$ and $\Omega\subset\Z^2$, we have
    \begin{equation*}
      \big\lvert \tilde{f} (\bm{x})- F_\Omega \tilde{f}(\bm{x}) \big\rvert
      = \Big\lvert \sum_{\bm{n} \in \mathbb{Z}^2} c_{\bm{n}}(\tilde{f})\, \e^{\i \langle \bm{n},\bm{x}\rangle} -\sum_{\bm{n} \in \Omega}c_{\bm{n}}(\tilde{f})\, \e^{\i \langle \bm{n},\bm{x}\rangle}\Big\rvert
      \leq \sum_{\bm{n} \in \mathbb{Z}^2\setminus \Omega} \big\lvert c_{\bm{n}}(\tilde{f})\, \e^{\i \langle \bm{n},\bm{x}\rangle}\big\rvert.
    \end{equation*}
  The second part follows with \Cref{thm:F}.
\end{proof}

In the following, we prove bounds on the Fourier coefficients of $\tilde{f}$. 

\begin{lem}\label{lem:bdC}
    Let $k \in \NN$, $0<\alpha<1$, and $g \in \mathcal{C}^{k,\alpha}(\mathbb{T}^2)$. Then it holds for all $\bm{n} \in \mathbb{Z}^2\setminus \{0\}$ that
    \begin{equation*}
        \lvert c_{\bm{n}}(g) \rvert \leq 2^{\frac{k+\alpha}{2}-1} \pi^\alpha \, \lvert g \rvert_{\mathcal{C}^{k,\alpha}(\mathbb{T}^2)} \, \frac{1}{\lvert \bm{n} \rvert^{k+\alpha}}.
    \end{equation*}
\end{lem}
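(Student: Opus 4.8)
The plan is to use the classical two-step estimate for Fourier coefficients of Hölder functions, but carried out carefully along a single coordinate axis so as to produce the exact constant $2^{(k+\alpha)/2-1}\pi^\alpha$. First one shifts $k$ derivatives onto the exponential by integration by parts, and then one extracts the remaining $\alpha$-decay from the Hölder seminorm by a half-period translation of the argument. I fix $\bm{n}=(n_1,n_2)\in\Z^2\setminus\{\bm{0}\}$ and assume without loss of generality that $\abs{n_1}\ge\abs{n_2}$, the other case being symmetric with the roles of the two coordinates exchanged. Since $\bm{n}\neq\bm{0}$, this gives $n_1\neq0$ and, crucially, $\abs{n_1}\ge\abs{\bm{n}}/\sqrt2$, where $\abs{\bm{n}}$ denotes the Euclidean norm.

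With the multiindex $\bm{\beta}=(k,0)$, repeated integration by parts in the $x_1$-variable — with no boundary terms, since $g$ and its derivatives up to order $k$ are $2\pi$-periodic — yields $c_{\bm{n}}(D^{\bm{\beta}}g)=(\i n_1)^k\,c_{\bm{n}}(g)$, hence $\abs{c_{\bm{n}}(g)}=\abs{n_1}^{-k}\,\abs{c_{\bm{n}}(D^{\bm{\beta}}g)}$. Next I apply the difference trick to $h:=D^{\bm{\beta}}g$. Choosing the shift $\bm{t}:=(\pi/n_1,0)$ so that $\inn{\bm{n},\bm{t}}=\pi$ and $\e^{-\i\inn{\bm{n},\bm{t}}}=-1$, translation invariance of the integral over $\T^2$ gives $c_{\bm{n}}(h)=-(2\pi)^{-2}\int_{\T^2}h(\bm{x}+\bm{t})\,\e^{-\i\inn{\bm{n},\bm{x}}}\d\bm{x}$, and averaging this with the defining formula for $c_{\bm{n}}(h)$ produces
\[
  c_{\bm{n}}(h)=\tfrac12(2\pi)^{-2}\int_{\T^2}\bigl(h(\bm{x})-h(\bm{x}+\bm{t})\bigr)\,\e^{-\i\inn{\bm{n},\bm{x}}}\d\bm{x}.
\]
Because $\abs{\bm{\beta}}=k$, the integrand is bounded in modulus by $\abs{g}_{\mathcal{C}^{k,\alpha}(\T^2)}\,\norm{\bm{t}}^\alpha$, and $\norm{\bm{t}}=\pi/\abs{n_1}$, so that $\abs{c_{\bm{n}}(h)}\le\tfrac12\,\abs{g}_{\mathcal{C}^{k,\alpha}(\T^2)}\,\pi^\alpha\,\abs{n_1}^{-\alpha}$.

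Combining the two estimates gives $\abs{c_{\bm{n}}(g)}\le\tfrac12\,\abs{g}_{\mathcal{C}^{k,\alpha}(\T^2)}\,\pi^\alpha\,\abs{n_1}^{-(k+\alpha)}$, and then the inequality $\abs{n_1}\ge\abs{\bm{n}}/\sqrt2$ turns $\abs{n_1}^{-(k+\alpha)}$ into $2^{(k+\alpha)/2}\abs{\bm{n}}^{-(k+\alpha)}$, yielding exactly the claimed bound. The main obstacle here is conceptual rather than computational: one must realize that both the integration by parts and the shift should be performed along the dominant coordinate axis rather than along the diagonal direction $\bm{n}/\abs{\bm{n}}^2$, as it is precisely this choice that reproduces the stated constant; once the shift $\bm{t}$ is fixed correctly, the remaining arguments are routine.
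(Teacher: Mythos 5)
Your proof is correct: the WLOG reduction to $\abs{n_1}\ge\abs{n_2}$ gives $\abs{n_1}\ge\abs{\bm{n}}/\sqrt2$, the $k$-fold integration by parts in $x_1$ is justified because $D^{(k,0)}g$ exists and is continuous and all lower-order derivatives are $2\pi$-periodic, the half-period shift identity follows from translation invariance of the integral over $\T^2$, and the final bookkeeping $\tfrac12\cdot 2^{(k+\alpha)/2}\pi^\alpha$ reproduces the stated constant exactly (including the degenerate case $k=0$, where no integration by parts is needed). The route is different from the paper's, though: the paper does not carry out this computation at all, but instead cites the analogous estimate for $1$-periodic functions from a textbook and transfers it by the rescaling $g_1(\bm{x}):=g(2\pi\bm{x})$, using that the Fourier coefficients coincide and that $\abs{g_1}_{\mathcal{C}^{k,\alpha}}=(2\pi)^{k+\alpha}\abs{g}_{\mathcal{C}^{k,\alpha}}$. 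Your argument is essentially the proof that underlies that citation (differentiate along the dominant frequency axis, then average with the $\pi/n_1$-shifted integral), but written directly in the $2\pi$-periodic normalization. What this buys is a self-contained verification of the precise constant $2^{\frac{k+\alpha}{2}-1}\pi^\alpha$ without having to chase it through the reference and the rescaling; what the paper's version buys is brevity. No gaps.
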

\begin{proof}
  The lemma was proven in \cite[p.~180]{claFouAna} for $1$-periodic functions.
  We transfer this result by setting the $1$-periodic function $g_1(\bm{x}):=g(2\pi \bm{x})$.
  A change of variables shows that the Fourier coefficients of $g$ and $g_1$ coincide,
  $$
  c_{\bm{n}}(g) 
  = (2\pi)^{-1} \int_{[0,2\pi]^2} g(\bm{x})\, \e^{-\i\langle\bm{n},\bm{x}\rangle} \d\bm{x}
  = \int_{[0,1]^2} g_1(\bm{x})\, \e^{-2\pi\i\langle\bm{n},\bm{x}\rangle} \d\bm{x}.
  $$
  Furthermore, we see that 
  $ \abs{g_1}_{\mathcal{C}^{k,\alpha}} = (2\pi)^{k+\alpha} \abs{g}_{\mathcal{C}^{k,\alpha}}.$
\end{proof}

\begin{thm}\label{thm:bdC2}
    Let $k \in \NN$, $0<\alpha \leq1$, and $f\in \mathcal{C}^{k,\alpha}(\mathbb{S}^2)$. 
    Then, for all $\bm{n} \in \mathbb{Z}^2\setminus \{\bm0\}$, the Fourier coefficients of the DFS function $\tilde{f}$ are bounded by
    \begin{equation*}
        \big\lvert c_{\bm{n}}(\tilde{f}) \big\rvert \leq 2^{\frac{k+\alpha}{2}-1} \pi^\alpha \, \frac{(k+3)!}{\lvert \bm{n} \rvert^{k+\alpha}}\, \norm{f}_{\mathcal{C}^{k,\alpha}(\mathbb{S}^2)}.
    \end{equation*}
\end{thm}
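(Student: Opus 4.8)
The plan is to chain the two estimates that have already been established: Theorem~\ref{thm:hoelC} bounds the Hölder seminorm of $\tilde f$ on the torus by the spherical norm of $f$, while Lemma~\ref{lem:bdC} bounds the Fourier coefficients of a torus function by its Hölder seminorm. Composing these two inequalities should yield the claim almost immediately in the interior range $0<\alpha<1$, and the only genuinely separate work is the endpoint $\alpha=1$.

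Concretely, for $0<\alpha<1$ I would first invoke Theorem~\ref{thm:hoelC}, which gives $\tilde f\in\mathcal{C}^{k,\alpha}(\T^2)$ together with the bound $\lvert\tilde f\rvert_{\mathcal{C}^{k,\alpha}(\T^2)}\le (k+3)!\,\norm{f}_{\mathcal{C}^{k,\alpha}(\S^2)}$ from \eqref{eq:hoelC1}. Applying Lemma~\ref{lem:bdC} to $g=\tilde f$ then produces, for every $\bm n\in\Z^2\setminus\{\bm 0\}$,
\[
  \lvert c_{\bm n}(\tilde f)\rvert
  \le 2^{\frac{k+\alpha}{2}-1}\pi^\alpha\,\lvert\tilde f\rvert_{\mathcal{C}^{k,\alpha}(\T^2)}\,\frac{1}{\lvert\bm n\rvert^{k+\alpha}}
  \le 2^{\frac{k+\alpha}{2}-1}\pi^\alpha\,\frac{(k+3)!}{\lvert\bm n\rvert^{k+\alpha}}\,\norm{f}_{\mathcal{C}^{k,\alpha}(\S^2)},
\]
which is exactly the asserted estimate.

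The main obstacle is the endpoint $\alpha=1$, where Lemma~\ref{lem:bdC} does not apply, since it requires $0<\alpha<1$ strictly. Here I would use the convention $\mathcal{C}^{k,1}(\S^2)=\mathcal{C}^{k+1}(\S^2)$ and argue by a limiting procedure, exactly in the spirit of the proof of Lemma~\ref{lem:bSeries}. For $f\in\mathcal{C}^{k+1}(\S^2)$ and any fixed $0<\beta<1$, Theorem~\ref{thm:diffC} yields $\tilde f\in\mathcal{C}^{k,\beta}(\T^2)$ with $\lvert\tilde f\rvert_{\mathcal{C}^{k,\beta}(\T^2)}\le(k+3)!\,\norm{f}_{\mathcal{C}^{k+1}(\S^2)}$ by \eqref{eq:diffC1}, and applying Lemma~\ref{lem:bdC} at exponent $\beta$ gives
\[
  \lvert c_{\bm n}(\tilde f)\rvert
  \le 2^{\frac{k+\beta}{2}-1}\pi^\beta\,\frac{(k+3)!}{\lvert\bm n\rvert^{k+\beta}}\,\norm{f}_{\mathcal{C}^{k+1}(\S^2)}
\]
for every $\bm n\in\Z^2\setminus\{\bm 0\}$. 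Since the left-hand side does not depend on $\beta$ while the right-hand side is continuous in $\beta$, letting $\beta\to1^-$ delivers the $\alpha=1$ bound, with $\norm{f}_{\mathcal{C}^{k+1}(\S^2)}=\norm{f}_{\mathcal{C}^{k,1}(\S^2)}$. The only care needed is to confirm that the three $\beta$-dependent factors $2^{(k+\beta)/2-1}$, $\pi^\beta$, and $\lvert\bm n\rvert^{-(k+\beta)}$ each converge to their $\alpha=1$ counterparts, which holds by continuity for every fixed $\bm n\neq\bm 0$.
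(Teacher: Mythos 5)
Your proposal is correct and follows essentially the same route as the paper: for $0<\alpha<1$ it chains the seminorm bound \eqref{eq:hoelC1} of Theorem~\ref{thm:hoelC} with Lemma~\ref{lem:bdC}, and for $\alpha=1$ it uses Theorem~\ref{thm:diffC} at exponents $\beta<1$ and passes to the limit by continuity of the constants. Your treatment of the endpoint is, if anything, slightly more careful than the paper's, since you explicitly note the convergence of the factor $\lvert\bm{n}\rvert^{-(k+\beta)}$ as well.
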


\begin{proof}
  We first show the assertion for $\alpha<1$.
  By \Cref{thm:hoelC}, we know that
  $\tilde{f} \in \mathcal{C}^{k,\alpha}(\mathbb{T}^2)$ with $\lvert \tilde{f}\rvert_{\mathcal{C}^{k,\alpha}(\mathbb{T}^2)}\leq (k+3)!\, \lVert f \rVert_{\mathcal{C}^{k,\alpha}(\S^2)}$.
  Then, \Cref{lem:bdC}
  implies the statement for $\alpha<1$.
  For $\alpha=1$, \Cref{thm:diffC} yields that
  $\tilde{f} \in \mathcal{C}^{k,1-\epsilon}(\mathbb{T}^2)$ with $\big\lvert \tilde{f}\big\rvert_{\mathcal{C}^{k,1-\epsilon}(\mathbb{T}^2)}\leq (k+3)!\, \norm{f}_{\mathcal{C}^{k,\alpha}(\mathbb{S}^2)}$ for all $0<\epsilon<1$.
  The claim follows from the first part combined with the fact that $2^{\frac{k+\alpha}{2}-1} \pi^\alpha$ is continuous in $\alpha$.
\end{proof}

\begin{thm}\label{thm:finH}
    Let $k\in \mathbb{N}$ with $k\ge2$, $0<\alpha\leq1$, and $f\in \mathcal{C}^{k,\alpha}(\mathbb{S}^2)$. 
    Then, for any expanding sequence $\{\Omega_h\}_h$ of bounded sets which exhausts $\mathbb{Z}\times\NN$ and all $h \in \mathbb{N}$ with $\bm0 \in \Omega_h$, it holds that
    \begin{equation}\label{eq:finHbd0}
      \lVert f  - S_{\Omega_h} f \rVert_{\mathcal{C}(\mathbb{S}^2)} \leq 2^{\frac{k+\alpha}{2}-1} \pi^\alpha\, (k+3)!\, \norm{f}_{\mathcal{C}^{k,\alpha}(\mathbb{S}^2)} 
      \Big( 4 \zeta(k+\alpha-1) - \sum_{\bm{n} \in \tilde\Omega_h\setminus \{\bm0\}} \frac{1}{\lvert \bm{n} \rvert^{k+\alpha}}\Big),
    \end{equation}
    where $\zeta$ denotes the Riemann zeta function
    $
    \zeta(r)=\sum_{n=1}^\infty n^{-r}
    ,\ r>1.
    $
    In particular, we have $\lVert f  - S_{\Omega_h} f \rVert_{\mathcal{C}(\mathbb{S}^2)}\to0$ as $h\to\infty$.
\end{thm}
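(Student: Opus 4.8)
The plan is to assemble the statement from the two error estimates already at our disposal and then reduce everything to a single explicit lattice sum. First I would note that the hypothesis $\bm0\in\Omega_h$ forces $\bm0\in\tilde\Omega_h$, since $M(\bm0)=\bm0$; consequently the index set $\mathbb{Z}^2\setminus\tilde\Omega_h$ appearing in the second part of \Cref{thm:hoelScon},
\[
  \lVert f - S_{\Omega_h}f\rVert_{\mathcal{C}(\mathbb{S}^2)}
  \le \sum_{\bm{n}\in\mathbb{Z}^2\setminus\tilde\Omega_h}\bigl\lvert c_{\bm{n}}(\tilde f)\bigr\rvert,
\]
does not contain the origin, so \Cref{thm:bdC2} is applicable to every summand. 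Inserting the decay bound $\lvert c_{\bm n}(\tilde f)\rvert\le 2^{(k+\alpha)/2-1}\pi^\alpha(k+3)!\,\lVert f\rVert_{\mathcal{C}^{k,\alpha}(\mathbb{S}^2)}\,\lvert\bm n\rvert^{-(k+\alpha)}$ and factoring the constant out of the sum reduces the entire claim to controlling the tail $\sum_{\bm n\in\mathbb{Z}^2\setminus\tilde\Omega_h}\lvert\bm n\rvert^{-(k+\alpha)}$.

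Next I would write this tail as the difference
\[
  \sum_{\bm{n}\in\mathbb{Z}^2\setminus\tilde\Omega_h}\frac{1}{\lvert\bm n\rvert^{k+\alpha}}
  =\sum_{\bm{n}\in\mathbb{Z}^2\setminus\{\bm0\}}\frac{1}{\lvert\bm n\rvert^{k+\alpha}}
  -\sum_{\bm{n}\in\tilde\Omega_h\setminus\{\bm0\}}\frac{1}{\lvert\bm n\rvert^{k+\alpha}},
\]
which is legitimate once the full series is seen to converge absolutely. The core of the proof is the exact evaluation of that full series. Grouping the summands by the value $m=\lvert\bm n\rvert=\lvert n_1\rvert+\lvert n_2\rvert$ of the $1$-norm, a direct count shows that for every $m\ge1$ exactly $4m$ lattice points satisfy $\lvert\bm n\rvert=m$, whence
\[
  \sum_{\bm{n}\in\mathbb{Z}^2\setminus\{\bm0\}}\frac{1}{\lvert\bm n\rvert^{k+\alpha}}
  =\sum_{m=1}^{\infty}\frac{4m}{m^{k+\alpha}}
  =4\sum_{m=1}^{\infty}\frac{1}{m^{k+\alpha-1}}
  =4\,\zeta(k+\alpha-1).
\]
This is precisely where the assumption $k\ge2$ is needed: combined with $\alpha>0$ it yields $k+\alpha-1>1$, so the zeta series, and hence the whole lattice sum, converges. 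Substituting this value together with the splitting above gives exactly \eqref{eq:finHbd0}.

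For the concluding convergence assertion I would argue that, since $\{\Omega_h\}_h$ exhausts $\mathbb{Z}\times\NN$ and $M$ merely reflects the second coordinate, the sets $\tilde\Omega_h=\Omega_h\cup M(\Omega_h)$ form an expanding sequence exhausting all of $\mathbb{Z}^2$. Therefore the subtracted partial sums increase to the full convergent series $4\zeta(k+\alpha-1)$, and the bracketed factor in \eqref{eq:finHbd0} tends to $0$ as $h\to\infty$. The proof is in essence an assembly of \Cref{thm:hoelScon,thm:bdC2} with one clean combinatorial identity, so I expect the only genuinely delicate point to be the lattice-point count on the $1$-norm shells and, relatedly, keeping careful track that $\lvert\cdot\rvert$ here denotes the $1$-norm, for which the count is exact and the constant $4$ arises without slack.
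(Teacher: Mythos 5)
Your proposal is correct and follows essentially the same route as the paper: \Cref{thm:hoelScon} for the tail bound, \Cref{thm:bdC2} for the coefficient decay, and the identity $\sum_{\bm n\in\Z^2\setminus\{\bm0\}}\lvert\bm n\rvert^{-(k+\alpha)}=4\zeta(k+\alpha-1)$. The only (harmless) deviation is that you evaluate this lattice sum by counting the $4m$ points on each $1$-norm shell $\lvert\bm n\rvert=m$, whereas the paper splits $\Z^2\setminus\{\bm0\}$ into the four quadrants plus the axes and invokes Tornheim's identity $\sum_{(n,m)\in\N^2}(n+m)^{-r}=\zeta(r-1)-\zeta(r)$; your shell count is self-contained and gives the same constant.
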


\begin{proof} 
    Let $h \in \mathbb{N}$ with $\bm0\in\Omega_h$.
    By \Cref{thm:hoelScon}, we have
    \begin{equation*}
        \lVert f  - S_{\Omega_h} f \rVert_{\mathcal{C}(\mathbb{S}^2)} 
        \leq \sum_{\bm{n} \in \mathbb{Z}^2\setminus \tilde\Omega_h} \big\lvert c_{\bm{n}}(\tilde{f}) \big\rvert.
    \end{equation*}
    By \Cref{thm:bdC2}, it holds 
    that
    \begin{equation}\label{eq:finHbd1}
      \lVert f  - S_{\Omega_h} f\rVert_{\mathcal{C}(\mathbb{S}^2)} 
      \leq 2^{\frac{k+\alpha}{2}-1} \pi^\alpha\, (k+3)!\, \norm{f}_{\mathcal{C}^{k,\alpha}(\mathbb{S}^2)} \sum_{\bm{n} \in \mathbb{Z}^2\setminus \Omega_h}\frac{1}{\lvert \bm{n}\rvert ^{k+\alpha}} 
      .
    \end{equation}
    In the rest of the proof, we show that
    \begin{equation}\label{eq:finHbd2}
      \sum_{\bm{n}\in \mathbb{Z}^2\setminus\{\bm0\}} \frac{1}{\lvert \bm{n} \rvert ^{k+\alpha}} = 4 \zeta(k+\alpha-1).
    \end{equation}
    It is proven in \cite[p.~308]{tornheim} that 
    \begin{equation}\label{eq:finHbd3}
      \sum_{(n,m) \in \mathbb{N}^2}\frac{1}{(n+m)^r}
      =\zeta(r-1)-\zeta(r),
      \qquad r>2.
    \end{equation}
    We split up the sum on the left hand side of \eqref{eq:finHbd2}
    to the four quadrants and the coordinate axes of $\Z^2$
    and obtain
    \begin{equation*}
      \sum_{\bm n\in \mathbb{Z}^2\setminus \{\bm0\}} \frac{1}{\lvert \bm n\rvert^{k+\alpha}}
      = 4 \sum_{(n_1,n_2)\in \mathbb{N}^2} \frac{1}{( n_1 +n_2)^{k+\alpha}}+4\sum_{n=1}^\infty \frac{1}{n^{k+\alpha}}.
    \end{equation*}
    By \eqref{eq:finHbd3}, we have
    \begin{equation*}
      \sum_{\bm n\in \mathbb{Z}^2\setminus \{\bm0\}} \frac{1}{\lvert \bm n\rvert^{k+\alpha}}
      =  4\big(\zeta(k+\alpha-1)-\zeta(k+\alpha)\big)+4\zeta(k+\alpha)
      =  4\zeta(k+\alpha-1),
    \end{equation*}
    which shows \eqref{eq:finHbd2}
    and thus finishes the proof.
\end{proof}

For the next theorem, we restrict ourselves to rectangular and {circular partial Fourier sums} \cite[p.~7~f.]{comHarAna} to obtain a bound on the speed of convergence.

\begin{thm} \label{thm:hoelSrate}
  Let $k\in \mathbb{N}$, $0<\alpha\leq1$, and $f\in\mathcal{C}^{k,\alpha}(\mathbb{S}^2)$. 
  For $h\in\mathbb{N}$, we define the circular partial DFS Fourier sum $K_h f := S_{\Omega_h}f$ associated with ${\Omega_h=\{\bm{n} \in \mathbb{Z}\times\NN\mid \lvert \bm{n}\rvert \leq h\}}$.
  Then there is a constant $M_{k,\alpha}$ depending only on $k$ and $\alpha$ such that
  \begin{equation*}
    \lVert f  - K_h f \rVert_{\mathcal{C}(\mathbb{S}^2)} 
    \leq M_{k,\alpha}\, \norm{f}_{\mathcal{C}^{k,\alpha}(\mathbb{S}^2)}\, h^{1-k-\alpha}.
  \end{equation*}
\end{thm}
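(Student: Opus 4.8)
The plan is to transfer the estimate to the torus and then reduce it to a rate bound for the circular partial Fourier sum of a Hölder-continuous function on $\T^2$. By \Cref{thm:F} the difference $f-K_h f$ on $\S^2$ corresponds to $\tilde f-F_{\tilde\Omega_h}\tilde f$ on $\T^2$, where $\tilde\Omega_h=\Omega_h\cup M(\Omega_h)=\{\bm n\in\Z^2:\abs{\bm n}\le h\}$ is the full disk of Euclidean radius $h$; since differences of BMC functions are again BMC, the supremum over $\S^2$ equals the supremum over $\T^2$, so $\norm{f-K_h f}_{\mathcal{C}(\S^2)}=\norm{\tilde f-F_{\tilde\Omega_h}\tilde f}_{\mathcal{C}(\T^2)}$, and $F_{\tilde\Omega_h}$ is exactly the \emph{circular} partial Fourier sum of radius $h$. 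By \Cref{thm:hoelC} we have $\tilde f\in\mathcal{C}^{k,\alpha}(\T^2)$ with $\norm{\tilde f}_{\mathcal{C}^{k,\alpha}(\T^2)}\le\frac74(k+3)!\,\norm{f}_{\mathcal{C}^{k,\alpha}(\S^2)}$; for the endpoint $\alpha=1$ I would instead use \Cref{thm:diffC} to place $\tilde f$ in $\mathcal{C}^{k,1-\varepsilon}(\T^2)$ and pass to the limit, exactly as in the proof of \Cref{thm:bdC2}.

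The main work is then a uniform rate bound for circular partial sums: I claim that for $g\in\mathcal{C}^{k,\alpha}(\T^2)$ one has $\norm{g-F_{\{\abs{\bm n}\le h\}}g}_{\mathcal{C}(\T^2)}\le C_{k,\alpha}\,\abs{g}_{\mathcal{C}^{k,\alpha}(\T^2)}\,h^{1-k-\alpha}$, which I would draw from the theory of circular partial Fourier sums in \cite{comHarAna}. It is essential here that the partial sums are circular. The crude bound $\norm{g-F_{\{\abs{\bm n}\le h\}}g}_{\mathcal{C}(\T^2)}\le\sum_{\abs{\bm n}>h}\abs{c_{\bm n}(g)}$ of \Cref{thm:hoelScon}, combined with the coefficient decay $\abs{c_{\bm n}(g)}\le C\abs{\bm n}^{-(k+\alpha)}$ from \Cref{thm:bdC2}, only yields the tail $\sum_{\abs{\bm n}>h}\abs{\bm n}^{-(k+\alpha)}\sim h^{2-k-\alpha}$, which is one power of $h$ too weak and moreover needs $k\ge2$ to converge at all, precisely the regime of \Cref{thm:finH}. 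The extra power of $h$ can only come from cancellation in the exponentials over the exterior of the disk, which I would extract by summation by parts in the radial variable against the Dirichlet kernel $D_h(\bm x)=\sum_{\abs{\bm n}\le h}\e^{\i\inn{\bm n,\bm x}}$ of the disk, using the $(k,\alpha)$-Hölder regularity of $g$ rather than merely the size of its coefficients. Combining this bound with the norm estimate of the first paragraph and setting $M_{k,\alpha}=\frac74(k+3)!\,C_{k,\alpha}$ finishes the proof.

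I expect the circular-partial-sum estimate to be the main obstacle. Unlike the one-dimensional Dirichlet kernel, the disk kernel $D_h$ is governed by Bessel-function asymptotics and its $L_1(\T^2)$-norm grows like $h^{1/2}$, so one cannot simply bound the error by a Lebesgue constant times a best-approximation error without quantitatively exploiting the oscillation of $D_h$. A clean example showing why genuine regularity, and not just coefficient decay, must be used is the function $g$ with $c_{\bm n}(g)=\abs{\bm n}^{-(k+\alpha)}$: its circular partial sum error at the origin is of exact order $h^{2-k-\alpha}$, yet $g$ itself fails to lie in $\mathcal{C}^{k,\alpha}(\T^2)$. Thus any argument relying solely on \Cref{thm:bdC2} is doomed, and the Hölder modulus of $g$ has to enter the kernel estimate directly. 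The remaining bookkeeping, namely the uniformity of $M_{k,\alpha}$ in $h$ and the endpoint $\alpha=1$, is routine given the reductions above.
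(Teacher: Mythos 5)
Your first paragraph matches the paper's reduction exactly: transfer to $\T^2$ via \Cref{thm:F}, bound $\abs{\tilde f}_{\mathcal{C}^{k,\alpha}(\T^2)}$ by \Cref{thm:hoelC}, handle $\alpha=1$ via $\mathcal{C}^{k,1-\varepsilon}$ as in \Cref{thm:bdC2}. You are also right that the pointwise decay of \Cref{thm:bdC2} alone only yields $h^{2-k-\alpha}$. But your diagnosis of where the missing power of $h$ must come from is wrong, and as a consequence the heart of your argument is an unproven claim pointing in an unnecessary direction. You assert that the gain ``can only come from cancellation in the exponentials over the exterior of the disk'' and propose summation by parts against the disk Dirichlet kernel with its Bessel asymptotics. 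The paper does nothing of the sort: it keeps the crude triangle-inequality bound $\norm{\tilde f - F_{\tilde\Omega_h}\tilde f}_{\mathcal{C}(\T^2)} \le \sum_{\abs{\bm{n}}>h}\abs{c_{\bm{n}}(\tilde f)}$ from \Cref{thm:hoelScon} --- so no oscillation of the exponentials is exploited at all --- and instead replaces the pointwise coefficient bound by the dyadic-block $\ell^1$ estimate from \cite[p.~184]{claFouAna},
\begin{equation*}
  \sum_{2^\ell\le\abs{\bm{n}}<2^{\ell+1}}\abs{c_{\bm{n}}(g)}
  \le M'_k\,2^{\ell(1-k)}\,2^{-(\ell+3)\alpha}\,\abs{g}_{\mathcal{C}^{k,\alpha}(\T^2)}.
\end{equation*}
This is the standard Bernstein-type estimate (Cauchy--Schwarz over the $\asymp 2^{2\ell}$ lattice points of the annulus against a Parseval bound for a finite difference of a $k$-th derivative of $g$); it uses the Hölder modulus of $g$, as you correctly anticipated, but it is still a statement purely about absolute values of coefficients. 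Summing the resulting geometric series over $\ell\ge\lfloor\log_2 h\rfloor$ gives $h^{1-k-\alpha}$ (here $k\ge1$ ensures convergence). So your claim that ``any argument relying solely on coefficients is doomed'' conflates the pointwise bound of \Cref{thm:bdC2} with $\ell^1$ block bounds: only the former is too weak. Circularity of $\Omega_h$ enters only through the shape of the tail set --- any $\Omega_h$ whose reflection-completion contains the disk of radius $h$ works, cf.\ \Cref{rem:rect} --- not through kernel cancellation.

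As written, your proposal therefore has a genuine gap: the ``main work,'' namely the rate bound for circular partial sums, is deferred to a reference and to a sketched kernel argument that you yourself flag as the main obstacle. That route is genuinely delicate in two dimensions (as you note, the $L_1$ norm of the disk kernel grows like $h^{1/2}$), and nothing in the sketch indicates how the radial summation by parts would close. Replacing that paragraph by the dyadic block estimate above reduces the proof to a short computation.
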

\begin{proof}
  By \Cref{thm:F,thm:hoelScon}, we have
  \begin{equation*}
    \lVert f  - K_h f \rVert_{\mathcal{C}(\mathbb{S}^2)} 
    \leq \sum_{\bm{n} \in \mathbb{Z}^2,\ \lvert \bm{n}\rvert >h} 
    \big\lvert c_{\bm{n}}(\tilde{f}) \big\rvert.
  \end{equation*}
  In \cite[p.\ 184]{claFouAna}, it was shown that there exists a constant $M'_k$ such that for all $g\in \mathcal{C}^{k,\alpha}(\T^2)$ and $\ell\in\N$, we have
  \begin{equation*}
    \sum_{\substack{\bm{n}\in\Z^2,\, 2^\ell\le \lvert \bm{n}\rvert <2^{\ell+1}}} \abs{c_{\bm{n}}(g)}
    \le M'_k\, 2^{\ell(1-k)}\, 2^{-(\ell+3) \alpha}\, \abs{g}_{\mathcal{C}^{k,\alpha}(\T^2)}.
  \end{equation*}
  Then we have
  \begin{equation*}
    \sum_{\substack{\bm{n}\in\Z^2,\, \lvert\bm{n}\rvert >h}} \abs{c_{\bm{n}}(g)}
    \le M'_k\, 2^{-3\alpha}\, \abs{g}_{\mathcal{C}^{k,\alpha}(\T^2)}
    \sum_{\ell = \lfloor \log_2 h \rfloor}^\infty
    2^{\ell(1-k-\alpha)},
  \end{equation*}
  where $\lfloor \log_2 h \rfloor$ denotes the largest integer $m$ such that $2^m\le h$.
  We can evaluate the geometric sum
  $$
  \sum_{l = \lfloor \log_2 h \rfloor}^\infty
  2^{\ell(1-k-\alpha)}
  = \frac{2^{\lfloor \log_2 h \rfloor (1-k-\alpha)}}{1-2^{1-k-\alpha}}
  \le \frac{2^{k+\alpha-1} h^{1-k-\alpha}}{1-2^{1-k-\alpha}}.
  $$
  In the proof of \Cref{thm:bdC2}, we have seen that $\lvert\tilde f\rvert_{\mathcal{C}^{k,\alpha}(\T^2)} \le (k+3)!\, \norm{f}_{\mathcal{C}^{k,\alpha}(\S^2)}$.
  Hence, with $g=\tilde f$, we have
  \begin{equation*}
    \sum_{\substack{\bm{n}\in\Z^2,\, \lvert\bm{n}\rvert >h}} \abs{c_{\bm{n}}(g)}
    \le M'_k\, 2^{k-2\alpha-1}\, \frac{h^{1-k-\alpha}}{1-2^{1-k-\alpha}}\,
    (k+3)!\, \norm{f}_{\mathcal{C}^{k,\alpha}(\S^2)}.\qedhere
  \end{equation*}
\end{proof}

\begin{rem} \label{rem:rect}
  The previous theorem still holds when we replace $K_h$ by the rectangular partial Fourier sum $R_h f := S_{\Omega_h}f$ associated with $\Omega_h=\{\bm{n}\in\mathbb{Z}\times\NN\mid \lvert n_1\rvert \leq h,\, \lvert n_2\rvert \leq h\}$.
\end{rem}

\subsection{Numerical computation}
\label{sec:numerics}

In this section, we aim to verify our findings numerically.
Let $x_+$ denote the positive part of $x\in\R$.
For $\nu\in\N$ and $a\in(0,1)$, we consider the test function
$$
f_\nu(\bm\xi)
:= ((\xi_3-a)_+)^{\nu+1}
,\qquad
\bm{\xi}\in\S^2,
$$
which has an extension $f_\nu^*$ defined by the same formula for $\bm{\xi}\in\R^3$.
Obviously, we have $f_\nu\in\mathcal{C}^\nu(\S^2)$.
The derivatives of $f^*$ with respect to the first two components vanish, and we see that ${D^{\nu\bm{e}^3}(\bm{\xi}) = \nu!\, (\xi_3-a)_+}$ is Lipschitz-continuous. Hence,
we have $f_\nu\in \mathcal{C^{\nu,\alpha}}(\S^2)$ for $0<\alpha<1$, cf.\ \Cref{prop:lipH}.
We test the DFS method with a linear combination of rotated versions of $f_3$, see \Cref{fig:f1}.

\begin{figure}[ht]\centering
  \includegraphics[height=.32\textwidth]{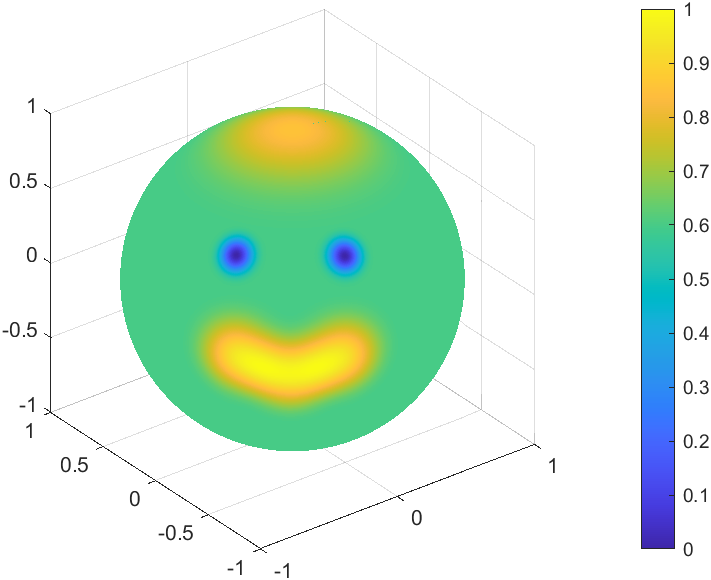}\qquad
  \includegraphics[height=.32\textwidth]{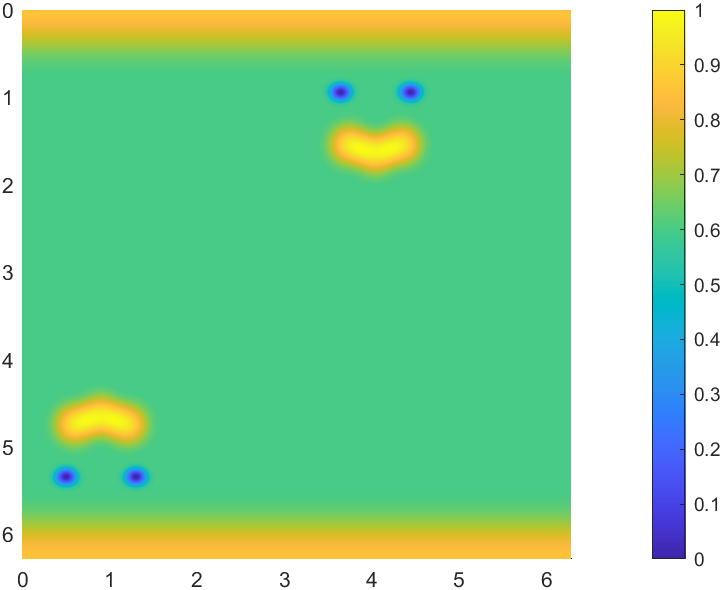}
  \caption{Test function $f_3$ (left) on the sphere $\S^2$ and its DFS function $\tilde f_3$ (right) on the torus $\T^2$.
    \label{fig:f1}
  }
\end{figure}

We compute the rectangular Fourier sum $R_h$ defined in \Cref{rem:rect} for different degrees~$h$
on a uniform grid of size $2400\times1200$ in the spherical coordinates $(\lambda,\theta)$
by first computing the Fourier sum \eqref{eq:DFSseries1} of $\tilde f$ on the "doubled" grid of $2400\times2400$ points on $\T^2$, and then transfer this to the sphere $\S^2$ by \Cref{thm:F}.
The Fourier coefficients $c_k(\tilde f)$ were computed approximately by an FFT of $\tilde f$ on a finer grid of $4800\times4800$ points.

For comparison, we also compute the spherical harmonics expansion \eqref{eq:sh-series}, truncated to the degree $n\leq h$.
The approximation error is similar to the DFS Fourier series, see \Cref{fig:error}.
Note that $R_h$ consists of $(h+1)(2h+1)$ summands, which are about twice as many as the spherical harmonics expansion with $(h+1)^2$ summands.
However, the computation time for the expansion of degree $h=1024$ is about 0.06\,s for the DFS Fourier expansion and 0.98\,s for the spherical harmonic expansion with the algorithm \cite{nfft3} on a standard PC with Intel Core i7-10700.

\begin{figure}[ht]\centering
    \begin{tikzpicture}
    \begin{loglogaxis}[xlabel=$h$, ylabel={max. error}, width=.54\textwidth, height=0.32\textwidth, xmin=6, xmax=1200, ymin=4e-9, ymax=1, axis x line*=bottom, axis y line*=left, legend pos=outer north east, legend style={cells={anchor=west}}]
      \addplot+[thick,mark=none,blue] table[x index=0,y index=1] {matlab/f1_error.dat};
      \addplot+[thick,mark=none,purple,dashed] table[x index=0,y index=2] {matlab/f1_error.dat};
    \end{loglogaxis}
  \end{tikzpicture}
  \caption{Logarithmic plot of the maximal error $\norm{f-R_hf}_{\mathcal{C}(\S^2)}$ of the DFS Fourier sum (blue) and the maximal error of the truncated spherical harmonics series (red, dashed), depending on the degree $h$.
  While the error behavior is similar, the DFS Fourier expansion can be computed much faster.
  \label{fig:error}
}
\end{figure}
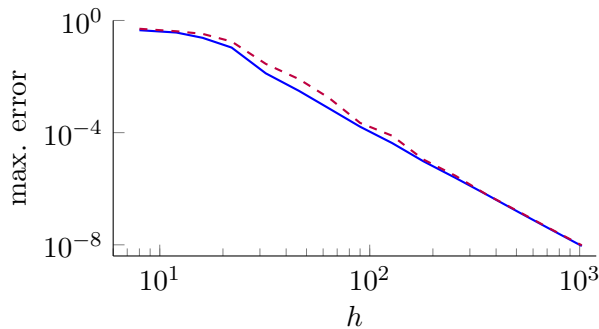

\section{Sobolev spaces}\label{sec:sobolev}

Sobolev spaces on the torus play an important role in harmonic analysis. 
A question which naturally arises is whether the DFS method preserves Sobolev spaces like it preserves differentiability and Hölder classes.
We will see that this question has to be answered in the negative; the DFS method does not map spherical Sobolev spaces to Sobolev spaces of the same order on the torus. 

Sobolev spaces on $\mathbb{R}^d$ can be defined via weak derivatives, see, e.g., \cite[p.~60]{sobolev}. 
We adapt this notion to $\T^d$ in the following.
  Let us denote by
  $
  \mathcal{C}^\infty_{\mathrm{c}}(\R^d)
  $
  the space of smooth functions with compact support.
  For $\bm{\beta} \in \NN^d$ and locally integrable functions $f,g \colon \R^d\to\C$, we say that $g$ is the $\bm{\beta}$-\emph{weak derivative of $f$} and write $D^{\bm{\beta}}f=g$ if
  \begin{equation*}
      \int_{U} f(\bm{x})\, D^{\bm{\beta}}u(\bm{x}) \d\bm{x}
      =(-1)^{\lvert \bm{\beta}\rvert} \int_U g(\bm{x})\, u(\bm{x}) \d \bm{x} 
      \quad \text{for all } u\in \mathcal{C}^\infty_{\mathrm{c}}(\R^d).
  \end{equation*}

\begin{defin}\label{defin:sob}
    We define the \emph{Sobolev space} $H^k(\mathbb{T}^2)$ of order $k \in \NN$ on the torus as the set of all functions $f \in L_2(\mathbb{T}^2)$ such that $D^{\bm{\beta}}f\in L_2(\mathbb{T}^2)$ for all $\bm{\beta}\in B^2_k$.
    This is a Hilbert space equipped with the norm
    \begin{equation*}
        \lVert f \rVert_{H^k(\mathbb{T}^2)}:=
            \left( \sum_{\bm{\beta}\in B^2_k} \lVert D^{\bm{\beta}}f \rVert_{L_2(\mathbb{T}^2)}^2 \right)^{\frac{1}{2}}.
    \end{equation*}
\end{defin}

  We define the radial extension $f^\star$ of $f \colon\mathbb{S}^{2} \to \mathbb{C}$ by
  \begin{equation*}
      f^\star \colon \mathbb{R}^3\setminus \{0\} \to \mathbb{C}, \ f^\star(\bm{x}) = f\left(\frac{\bm{x}}{\lVert \bm{x} \rVert}\right).
  \end{equation*}
  If $f^\star$ is differentiable, the {surface gradient} of $f$ is given by
  \cite[p.~78]{mueller}
  \begin{equation*}
      \nabla^\star f\colon\mathbb{S}^{2}\to \mathbb{C}^3, \ \nabla^\star f(\bm{x}) = \nabla f^\star(\bm{x}).
  \end{equation*}

There are many equivalent definitions of spherical Sobolev spaces, e.g., with spherical harmonics, see \cite[sec.~6.2]{michel}. 
For our purposes, the following equivalent characterization derived in \cite[p.~17]{quellmalz} is convenient.
\begin{defin}\label{defin:sobs}
    We set the zeroth order spherical Sobolev space as $H^0(\S^2) := L_2(\S^2)$.
    For $k \in \mathbb{N}$, the spherical \emph{Sobolev norm} of $f \in \mathcal{C}^k(\mathbb{S}^2)$ is defined recursively by
    \begin{equation*}
        \lVert f \rVert_{H^k(\mathbb{S}^2)}
        :=\left( \sum_{i=1}^3 \left\lVert (\nabla^\star f)_i \right\rVert_{H^{k-1}(\mathbb{S}^2)}^2
        +\frac{1}{4}\lVert f \rVert^2 _{H^{k-1}(\mathbb{S}^2)}\right)^{\frac{1}{2}}.
    \end{equation*}
    The \emph{Sobolev space} $H^k(\mathbb{S}^2)$ of order $k$ is
    the closure of the subset of functions $f\in\mathcal{C}^k(\mathbb{S}^2)$ of finite Sobolev norm $\lVert f \rVert_{H^k(\mathbb{S}^2)}$ with respect to this norm. 
\end{defin}

A function which is in $H^1(\mathbb{R}^2)$ but unbounded near the origin can be found in \cite[(4.43)]{sobolev}. We adapt this example to our spherical setting, yielding a function in $H^1(\mathbb{S}^2)$ which is unbounded around the poles and whose DFS function is not in $H^1(\mathbb{T}^2)$. 
The intuition behind this result is that the DFS transform maps a small area around the poles to an enlarged area on the torus. Therefore, the spherical Sobolev norm does not sufficiently control the behavior around the poles to ensure a finite integral on the torus. 
Due to the Sobolev embedding theorem, all functions in a spherical Sobolev space of order greater than one are bounded and continuous. 

\begin{thm} \label{thm:count}
    Let
    \begin{equation*}
        f \colon\mathbb{S}^2 \to \mathbb{C}, \ f(\bm{\xi}) = \begin{cases}
        \ln\left(\ln\frac{8}{\sqrt{1-\xi_3^2}}\right), &\ \lvert \xi_3\rvert \not=1,\\
        0, & \text{ otherwise}.\end{cases}
    \end{equation*}
    Then $f \in H^1(\mathbb{S}^2)$ and $\tilde{f} \not\in H^1(\mathbb{T}^2)$.
\end{thm}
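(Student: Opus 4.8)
The plan is to exploit that $f$ is \emph{zonal}: it depends on the point $\bm\xi\in\S^2$ only through the height $\xi_3=\cos\theta$, via the profile $F(\theta):=\ln\ln\frac{8}{\abs{\sin\theta}}$, since $\sqrt{1-\xi_3^2}=\abs{\sin\theta}$ under the transform $\phi$. (The constant $8$ keeps the argument of the outer logarithm above $1$, so $F$ is real and smooth for $\theta\notin\{0,\pi\}$.) A direct computation gives
\begin{equation*}
  F'(\theta)=\frac{-\cos\theta}{\sin\theta\,\ln(8/\sin\theta)},
  \qquad \theta\in(0,\pi),
\end{equation*}
so that $\abs{F'(\theta)}^2=\cos^2\theta\big(\sin^2\theta\,(\ln(8/\sin\theta))^2\big)^{-1}\sim\theta^{-2}(\ln(8/\theta))^{-2}$ near $\theta=0$. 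Everything hinges on the contrast between the spherical surface measure $\sin\theta\d\theta\d\lambda$ in \eqref{eq:intS} and the flat measure $\d\theta\d\lambda$ on $\T^2$ near the poles $\theta\in\{0,\pi\}$.

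For $f\in H^1(\S^2)$, I would first note membership in $L_2(\S^2)$, which is immediate because the double logarithm grows slower than any power, so $2\pi\int_0^\pi\abs{F(\theta)}^2\sin\theta\d\theta<\infty$. Since $f$ is zonal, its surface gradient points along the meridians and satisfies $\abs{\nabla^\star f}=\abs{F'(\theta)}$ (the $\theta$-coordinate is arc length along meridians on the unit sphere, so one may also read this off $\nabla f^\star$ directly). The decisive estimate, with the integrand benign away from the poles, is
\begin{equation*}
  \norm{\nabla^\star f}_{L_2(\S^2)}^2
  =2\pi\int_0^\pi\abs{F'(\theta)}^2\sin\theta\d\theta
  \sim\int_0\frac{\d\theta}{\theta\,(\ln(8/\theta))^2}<\infty,
\end{equation*}
where the weight $\sin\theta\sim\theta$ cancels one power of $\theta$ and the substitution $t=\ln(8/\theta)$ reduces the polar tail to the convergent $\int^\infty t^{-2}\d t$. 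As $f$ is unbounded it is not itself in $\mathcal{C}^1(\S^2)$, so to place it in the closure-defined space of \Cref{defin:sobs} I would approximate by the truncations $f_n:=\min(f,n)$, show $\norm{f-f_n}_{H^1(\S^2)}\to0$ by dominated convergence (since $\nabla^\star f_n$ equals $0$ where $f>n$ and equals $\nabla^\star f$ where $f<n$, up to a null set, while $f,\nabla^\star f\in L_2$ and the excess region shrinks to the poles), and finally mollify each Lipschitz $f_n$ into $\mathcal{C}^1$ functions converging in the Sobolev norm.

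For the negative part, I would argue by contradiction using weak derivatives as in \Cref{defin:sob}. The DFS function is again zonal, $\tilde f(\lambda,\theta)=F(\theta)$, so $\partial_\lambda\tilde f=0$, while on the open full-measure set where $\theta\notin\{0,\pi\}$ the function is smooth; hence any $L_2$ weak $\theta$-derivative of $\tilde f$ must coincide almost everywhere with the classical derivative $F'$. But with the \emph{flat} torus measure,
\begin{equation*}
  \norm{\partial_\theta\tilde f}_{L_2(\T^2)}^2
  =2\pi\int_{-\pi}^\pi\abs{F'(\theta)}^2\d\theta
  \sim\int_0\frac{\d\theta}{\theta^2\,(\ln(8/\theta))^2}=\infty,
\end{equation*}
because the missing factor $\sin\theta$ leaves a full $\theta^{-2}$ singularity that the double logarithm cannot tame (the same substitution $t=\ln(8/\theta)$ now yields the divergent $\int^\infty e^t t^{-2}\d t$). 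This is incompatible with $\tilde f$ having a weak $\theta$-derivative in $L_2(\T^2)$, so $\tilde f\notin H^1(\T^2)$.

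I expect the main obstacle to be the approximation step certifying $f\in H^1(\S^2)$ in the precise closure sense of \Cref{defin:sobs}: one must verify that truncating and mollifying produces $\mathcal{C}^1$ functions converging in the recursively defined spherical Sobolev norm, being careful about the level-set boundary $\{f=n\}$ and the behaviour of $\nabla^\star f_n$ there. The two integral computations themselves are routine once the zonal reduction and the substitution $t=\ln(8/\theta)$ are in place; the conceptual heart is simply that the weight $\sin\theta$ present on $\S^2$ and absent on $\T^2$ is exactly what separates convergence from divergence near the poles.
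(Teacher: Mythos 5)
Your proposal is correct and follows essentially the same route as the paper: the same zonal reduction to the profile $F(\theta)=\ln\ln(8/\sin\theta)$, the same two integrals whose convergence/divergence is decided by the presence or absence of the weight $\sin\theta$ near the poles, and the same identification of any putative weak $\theta$-derivative of $\tilde f$ with the classical derivative $F'$ on the full-measure set away from the poles to derive the contradiction. The only difference is the approximating sequence used to place $f$ in the closure-defined $H^1(\S^2)$: you truncate at level $n$ and then mollify, whereas the paper uses the explicit family $f_n=\ln\ln\bigl(8/(\sqrt{1-\xi_3^2}+\tfrac1n)\bigr)$, which is dominated by $f$ and by the integrable gradient bound via the monotonicity of $x\mapsto x\ln(8/x)$, and thereby avoids the mollification step you correctly flag as the main remaining obstacle.
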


\begin{proof}
  For $n \in \mathbb{N}$, we set
  \begin{equation*}
    f_n \colon\mathbb{S}^2 \to \mathbb{C}, \ f_n(\bm{\xi}) = \ln\left(\ln\frac{8}{\sqrt{1-\xi_3^2}+\frac{1}{n}}\right),
  \end{equation*}
  which converge pointwise almost everywhere, more precisely on $\{\bm{\xi} \in \S^2\mid \lvert \xi_3\rvert \not=1\}$, to $f$ for $n\to\infty$.
  We will show that $f_n \to f$ in $H^1(\mathbb{S}^2)$ for $n\to\infty$.
    Let $n \in \mathbb{N}$.
    Clearly, the function $f_n$ is non-negative and continuously differentiable. Let $(\lambda,\theta)\in [-\pi,\pi]\times[0,\pi]$, then $0\leq \sin\theta =\sqrt{1-(\cos\theta)^2}$ and hence
    \begin{equation*}
        (f_n \circ \phi)(\lambda,\theta) 
        =\ln\left(\ln\frac{8}{\sin\theta+\frac{1}{n}}\right).
    \end{equation*}
	  By \cite{sphericalgrad}, the surface gradient of the differentiable function $f_n$ is given by
		\begin{equation*}
		\nabla^\star f_n(\phi(\lambda,\theta))
    =\begin{pmatrix}\cos(\lambda)\cos(\theta)\\\sin(\lambda)\cos(\theta)\\-\sin(\theta)\end{pmatrix} \frac{\partial (f_n\circ \phi)}{\partial \theta}(\lambda,\theta)
    +\begin{pmatrix}-\sin\lambda\\ \ \cos\lambda\\0\end{pmatrix} \frac{1}{\sin\theta} \frac{\partial (f_n\circ\phi)}{\partial \lambda}(\lambda,\theta).
		\end{equation*}
    Hence, we have
    \begin{equation} \label{eq:grad-fn}
        (\nabla^\star f_n\circ \phi)(\lambda,\theta)
        =- \begin{pmatrix}\cos(\lambda)\cos(\theta)\\\sin(\lambda)\cos(\theta)\\-\sin(\theta)\end{pmatrix}  \frac{\cos\theta}{(\sin\theta+\frac{1}{n})\, \ln\frac{8}{\sin\theta+\frac{1}{n}}}.
    \end{equation}
    Let $(\lambda,\theta)\in [-\pi,\pi]\times(0,\pi)$. Clearly, we have
    $\sin(\theta)< \sin(\theta)+\frac{1}{n}$,
    which implies that
    \begin{equation*}
        \lvert (f_n\circ\phi)(\lambda,\theta)\rvert \leq \ln\left(\ln\frac{8}{\sin\theta}\right)
        = (f\circ\phi)(\lambda,\theta),
    \end{equation*}
    since the logarithm is increasing. Furthermore, since the function $x\mapsto x \ln\frac{8}{x}$ is increasing on the interval $(0,\frac{8}{\e})$, we have  that
    \begin{equation*}
        \sin(\theta)\, \ln\left(\frac{8}{\sin\theta}\right)\leq \left(\sin\theta+\frac{1}{n}\right) \,\ln\left(\frac{8}{\sin\theta+\frac{1}{n}}\right).
    \end{equation*}
    Hence, we have by \eqref{eq:grad-fn} for all $i\in[3]$
    \begin{equation*}
        \lvert (\nabla^\star f_n \circ \phi)_i(\lambda,\theta)\rvert
        \leq \frac{\lvert \cos\theta\rvert}{\sin(\theta)\, \ln\frac{8}{\sin\theta}}.
    \end{equation*}
    By l'Hôpital's rule, we have 
    $\lim_{\theta \downarrow 0}\left(\ln\left(\ln\frac{8}{\sin\theta}\right)\right)^2\,\sin\theta=0$ and hence, by \eqref{eq:intS},
    \begin{equation*}
      \int_{\S^2} \abs{f(\bm\xi)}^2\d\bm\xi
      = 2\pi \int_{0}^\pi \left(\ln\left(\ln\frac{8}{\sin\theta}\right)\right)^2 \, \sin(\theta) \,\mathrm{d}\theta<\infty.
    \end{equation*}
    This implies $f \in L_2(\mathbb{S}^2)$.
    By Lebesgue's dominated convergence theorem applied to the function $\abs{f_n-f}$, we have $f_n \to f$ in $L_2(\mathbb{S}^2)$ as $n \to \infty$. 
    Furthermore, by substituting $t= \ln\frac{8}{\sin\theta}$, we have
    \begin{equation*}
        \int_{0}^\pi \frac{(\cos\theta)^2}{(\sin\theta)^2\left(\ln\frac{8}{\sin\theta}\right)^2}\sin(\theta) \,\mathrm{d}\theta
        \leq 2 \int_0^{\frac{\pi}{2}}\frac{\cos\theta}{\sin\theta}\left(\ln\frac{8}{\sin\theta}\right)^{-2}\,\mathrm{d}\theta
        =2\int_{\ln(8)}^\infty \frac{1}{t^2}\,\mathrm{d}t<\infty.
    \end{equation*}
    Therefore, again by the dominated convergence theorem, we have $f_n \in H^1(\mathbb{S}^2)$ for all $n \in \mathbb{N}$ and that $\{f_n\}_n$ is a Cauchy sequence in $H^1(\mathbb{S}^2)$. By completeness, it follows that there exists a limit in $H^1(\mathbb{S}^2)$ of $\{f_n\}_n$. We can identify this limit as $f$ since $H^1(\mathbb{S}^2)$ convergence clearly implies $L_2(\mathbb{S}^2)$ convergence. We conclude that $f \in H^1(\mathbb{S}^2)$.
    
    To show that $\tilde{f} \not\in H^1(\mathbb{T}^2)$, we assume that $\tilde{f}$ has a weak derivative $D^{\bm{e}_2}\tilde{f} \in L_1^\mathrm{loc}(\mathbb{R}^2)$. Since $\tilde{f}$ is classically differentiable on $(\pi,\pi)\times (0,\pi)$, the fundamental lemma of calculus of variations shows that
    \begin{equation*}
        D^{\bm{e}_2}\tilde{f}(\lambda,\theta)
        =\frac{-\cos\theta}{\sin(\theta)\, \ln\frac{8}{\sin \theta}} 
        \quad\text{almost everywhere on }(\pi,\pi)\times(0,\pi).
    \end{equation*}
    However, we have
    \begin{equation*}
        \big\lVert \tilde{f} \big\rVert_{H^1(\mathbb{T}^2)}^2
        \geq \int_{(-\pi,\pi)\times(0,\pi)} \big\lvert D^{\bm{e}_2}\tilde{f}(\bm{x})\big\rvert^2\,\mathrm{d}\bm{x}
        =2 \pi \int_0^\pi \frac{(\cos\theta)^2}{(\sin\theta)^2 \left(\ln\frac{8}{\sin\theta}\right)^2}\,\mathrm{d}\theta =\infty.
    \end{equation*}
    Hence, $\tilde{f} \not\in H^1(\mathbb{T}^2)$.
\end{proof}

\section*{Acknowledgments}
The authors thank Gabriele Steidl for making valuable comments to improve this article.
The second author thanks Tino Ullrich for insightful discussions about Hölder spaces.

\printbibliography[heading=bibintoc]

@BOOK{sobolev,
  AUTHOR = {Adams, R. and Fournier, J.},
  LOCATION = {Boston},
  PUBLISHER = {Academic Press},
  DATE = {2003},
  EDITION = {2},
  ISBN = {978-0-12-044143-3},
  SERIES = {Pure Appl. Math.},
  TITLE = {Sobolev Spaces},
  VOLUME = {140},
}

@REPORT{cosmo,
  AUTHOR = {Baldauf, M. and Ritter, B. and Schraff, C. and Majewski, D. and Mironov, D. and Gebhardt, C.},
  INSTITUTION = {Deutscher Wetterdienst (DWD)},
  DATE = {2018},
  TITLE = {Beschreibung des operationellen {K}ürzestfristvorhersagemodells {COSMO-D2} und {COSMO-D2-EPS} und seiner {A}usgabe in die {D}atenbanken des {DWD}},
}

@REPORT{BouSloTow21,
  AUTHOR = {Boullé, Nicolas and S{ł}omka, Jonasz and Townsend, Alex},
  DATE = {2021},
  EPRINT = {2103.16638},
  EPRINTTYPE = {arXiv},
  TITLE = {An optimal complexity spectral method for {N}avier--{S}tokes simulations in the ball},
}

@ARTICLE{BouTow20,
  AUTHOR = {Boullé, Nicolas and Townsend, Alex},
  DATE = {2020},
  DOI = {10.1137/19M1297063},
  JOURNALTITLE = {SIAM J. Sci. Comput.},
  NUMBER = {4},
  PAGES = {C169--C191},
  TITLE = {Computing with functions in the ball},
  VOLUME = {42},
}

@ARTICLE{boyd,
  AUTHOR = {Boyd, J. P.},
  DATE = {1978},
  DOI = {10.1175/1520-0493(1978)106<1184:TCOSFO>2.0.CO;2},
  JOURNALTITLE = {Mon. Weather Rev.},
  NUMBER = {8},
  PAGES = {1184--1191},
  TITLE = {The Choice of Spectral Functions on a Sphere for Boundary and Eigenvalue Problems: A Comparison of {C}hebyshev, {F}ourier and Associated {L}egendre Expansions},
  VOLUME = {106},
}

@ARTICLE{w1,
  AUTHOR = {Cheong, H.-B.},
  DATE = {2000},
  DOI = {10.1006/jcph.2000.6615},
  JOURNALTITLE = {J. Comput. Phys.},
  NUMBER = {1},
  PAGES = {261--287},
  TITLE = {Application of double {F}ourier series to the shallow-water equations on a sphere},
  VOLUME = {165},
}

@BOOK{w2,
  AUTHOR = {Coiffier, J.},
  LOCATION = {Cambridge},
  PUBLISHER = {Cambridge University Press},
  DATE = {2011},
  DOI = {10.1017/CBO9780511734458},
  TITLE = {Fundamentals of Numerical Weather Prediction},
}

@INCOLLECTION{grids,
  AUTHOR = {Collins, S. N. and James, R. S. and Ray, P. and Chen, K. and Lassman, A. and Brownlee, J.},
  LOCATION = {London},
  PUBLISHER = {IntechOpen},
  BOOKTITLE = {Climate Change and Regional/Local Responses},
  CHAPTER = {4},
  DATE = {2013},
  DOI = {10.5772/55922},
  TITLE = {Grids in Numerical Weather and Climate Models},
}

@ARTICLE{DraWri20,
  AUTHOR = {Drake, Kathryn P. and Wright, Grady B.},
  DATE = {2020},
  DOI = {10.1016/j.jcp.2020.109544},
  JOURNALTITLE = {J. Comput. Phys.},
  PAGES = {109544,15},
  TITLE = {A fast and accurate algorithm for spherical harmonic analysis on {HEALP}ix grids with applications to the cosmic microwave background radiation},
  VOLUME = {416},
}

@ARTICLE{drhe,
  AUTHOR = {Driscoll, J. R. and Healy, D.},
  DATE = {1994},
  DOI = {10.1006/aama.1994.1008},
  JOURNALTITLE = {Adv. in Appl. Math.},
  PAGES = {202--250},
  TITLE = {Computing {F}ourier transforms and convolutions on the 2--sphere},
  VOLUME = {15},
}

@ARTICLE{EhlGraNeuSte21,
  AUTHOR = {Ehler, Martin and Gräf, Manuel and Neumayer, Sebastian and Steidl, Gabriele},
  DATE = {2021},
  DOI = {10.1007/s10208-021-09491-2},
  JOURNALTITLE = {Found. Comput. Math.},
  TITLE = {Curve based approximation of measures on manifolds by discrepancy minimization},
}

@ARTICLE{fornberg,
  AUTHOR = {Fornberg, B. and Merrill, D.},
  DATE = {1997},
  DOI = {10.1029/97GL03272},
  JOURNALTITLE = {Geophys. Res. Lett.},
  NUMBER = {24},
  PAGES = {3245--3248},
  TITLE = {Comparison of finite difference- and pseudospectral methods for convective flow over a sphere},
  VOLUME = {24},
}

@ARTICLE{Ger14,
  AUTHOR = {Gerhards, C.},
  DATE = {2014},
  DOI = {10.1088/0266-5611/30/8/085004},
  JOURNALTITLE = {Inverse Problems},
  NUMBER = {8},
  PAGES = {085004,30},
  TITLE = {A combination of downward continuation and local approximation for harmonic potentials},
  VOLUME = {30},
}

@BOOK{claFouAna,
  AUTHOR = {Grafakos, L.},
  LOCATION = {New York},
  PUBLISHER = {Springer},
  DATE = {2008},
  DOI = {10.1007/978-1-4939-1194-3},
  EDITION = {2},
  SERIES = {Grad. Texts in Math.},
  TITLE = {Classical {F}ourier Analysis},
  VOLUME = {249},
}

@INCOLLECTION{HiPoQu18,
  AUTHOR = {Hielscher, R. and Potts, D. and Quellmalz, M.},
  EDITOR = {Hofmann, B. and Leitao, A. and Zubelli, J. Passamani},
  LOCATION = {Basel},
  PUBLISHER = {Birkhäuser},
  BOOKTITLE = {New Trends in Parameter Identification for Mathematical Models},
  DATE = {2018},
  DOI = {10.1007/978-3-319-70824-9_7},
  ISBN = {978-3-319-70823-2},
  PAGES = {121--144},
  SERIES = {Trends in Mathematics},
  TITLE = {An {SVD} in Spherical Surface Wave Tomography},
}

@ARTICLE{HiQu15,
  AUTHOR = {Hielscher, R. and Quellmalz, M.},
  DATE = {2015},
  DOI = {10.1088/0266-5611/31/8/085001},
  JOURNALTITLE = {Inverse Problems},
  NUMBER = {8},
  PAGES = {085001},
  TITLE = {Optimal Mollifiers for Spherical Deconvolution},
  VOLUME = {31},
}

@MISC{nfft3,
  AUTHOR = {Keiner, Jens and Kunis, Stefan and Potts, Daniel},
  HOWPUBLISHED = {\url{http://www.tu-chemnitz.de/~potts/nfft}},
  NOTE = {Contributors: F.~Bartel, M.~Fenn, T.~Görner, M.~Kircheis, T.~Knopp, M.~Quellmalz, M.~Schmischke, T.~Volkmer, A.~Vollrath},
  TITLE = {{NFFT 3.5, C subroutine library}},
}

@BOOK{comHarAna,
  AUTHOR = {Khavin, V. P. and Nikol’skii, N. K.},
  LOCATION = {Berlin, Heidelberg},
  PUBLISHER = {Springer},
  DATE = {1992},
  DOI = {10.1007/978-3-662-06301-9},
  SERIES = {Encyclopaedia Math. Sci.},
  SUBTITLE = {Harmonic Analysis in $\mathbb{R}^n$},
  TITLE = {Commutative Harmonic Analysis IV.},
  VOLUME = {42},
}

@ARTICLE{kunis,
  AUTHOR = {Kunis, S. and Potts, D.},
  DATE = {2003},
  DOI = {10.1016/S0377-0427(03)00546-6},
  JOURNALTITLE = {J. Comput. Appl. Math.},
  NUMBER = {1},
  PAGES = {75--98},
  TITLE = {Fast spherical {F}ourier algorithms},
  VOLUME = {161},
}

@ARTICLE{w3,
  AUTHOR = {Layton, A. T. and Spotz, W. F.},
  DATE = {2003},
  DOI = {10.1016/S0021-9991(03)00207-9},
  JOURNALTITLE = {J. Comput. Phys.},
  NUMBER = {1},
  PAGES = {180--196},
  TITLE = {A semi-{L}agrangian double {F}ourier method for the shallow water equations on the sphere},
  VOLUME = {189},
}

@REPORT{spectral,
  AUTHOR = {Machenhauer, B. and Rasmussen, E.},
  INSTITUTION = {Copenhagen University: Institute for Theoretical Meteorology},
  DATE = {1972},
  TITLE = {On the integration of the spectral hydrodynamical equations by a transform method},
}

@ARTICLE{merilees,
  AUTHOR = {Merilees, P. E.},
  DATE = {1973},
  DOI = {10.1080/00046973.1973.9648342},
  JOURNALTITLE = {Atmosphere},
  NUMBER = {1},
  PAGES = {13--20},
  TITLE = {The pseudospectral approximation applied to the shallow water equations on a sphere},
  VOLUME = {11},
}

@BOOK{michel,
  AUTHOR = {Michel, V.},
  LOCATION = {Basel},
  PUBLISHER = {Birkhäuser},
  DATE = {2013},
  DOI = {10.1007/978-0-8176-8403-7},
  SERIES = {Appl. Numer. Harmon. Anal.},
  SUBTITLE = {Fourier, Spline, and Wavelet Methods on the Real Line, the Sphere, and the Ball},
  TITLE = {Lectures on Constructive Approximation},
}

@ARTICLE{Mo99,
  AUTHOR = {Mohlenkamp, Martin J.},
  DATE = {1999},
  DOI = {10.1007/BF01261607},
  JOURNALTITLE = {J. Fourier Anal. Appl.},
  PAGES = {159--184},
  TITLE = {A Fast Transform for Spherical Harmonics},
  VOLUME = {5},
}

@ARTICLE{MonNak18,
  AUTHOR = {Montanelli, Hadrien and Nakatsukasa, Yuji},
  DATE = {2018},
  DOI = {10.1137/17M1112728},
  JOURNALTITLE = {SIAM J. Sci. Comput.},
  NUMBER = {1},
  PAGES = {A421--A451},
  TITLE = {Fourth-order time-stepping for stiff {PDE}s on the sphere},
  VOLUME = {40},
}

@BOOK{mueller,
  AUTHOR = {Müller, C.},
  LOCATION = {New York},
  PUBLISHER = {Springer},
  DATE = {1998},
  DOI = {10.1007/978-1-4612-0581-4},
  SERIES = {Appl. Math. Sci.},
  TITLE = {Analysis of Spherical Symmetries in {E}uclidean Spaces},
  VOLUME = {129},
}

@ARTICLE{orszag,
  AUTHOR = {Orszag, S. A.},
  DATE = {1974},
  DOI = {10.1175/1520-0493(1974)102<0056:FSOS>2.0.CO;2},
  JOURNALTITLE = {Mon. Weather Rev.},
  NUMBER = {1},
  PAGES = {56--75},
  TITLE = {Fourier Series on Spheres},
  VOLUME = {102},
}

@BOOK{numFou,
  AUTHOR = {Plonka, G. and Potts, D. and Steidl, G. and Tasche, M.},
  LOCATION = {Basel},
  PUBLISHER = {Birkhäuser},
  DATE = {2018},
  DOI = {10.1007/978-3-030-04306-3},
  SERIES = {Appl. Numer. Harmon. Anal.},
  TITLE = {Numerical {F}ourier Analysis},
}

@INPROCEEDINGS{potts,
  AUTHOR = {Potts, D. and Van Buggenhout, N.},
  BOOKTITLE = {2017 International Conference on Sampling Theory and Applications (SampTA)},
  DATE = {2017},
  DOI = {10.1109/SAMPTA.2017.8024365},
  PAGES = {82--86},
  TITLE = {Fourier extension and sampling on the sphere},
}

@INBOOK{sphericalgrad,
  AUTHOR = {Quartapelle, L.},
  LOCATION = {Basel},
  PUBLISHER = {Birkhäuser},
  DATE = {1993},
  DOI = {10.1007/978-3-0348-8579-9},
  PAGES = {250--251},
  SERIES = {Internat. Ser. Numer. Math.},
  TITLE = {Numerical Solution of the Incompressible {N}avier--{S}tokes Equations},
  VOLUME = {113},
}

@ARTICLE{quellmalz,
  AUTHOR = {Quellmalz, M.},
  DATE = {2020},
  DOI = {10.1007/s13324-020-00383-2},
  EID = {38},
  JOURNALTITLE = {Anal. Math. Phys.},
  NUMBER = {3},
  TITLE = {The {F}unk–{R}adon transform for hyperplane sections through a common point},
  VOLUME = {10},
}

@BOOK{Rud76,
  AUTHOR = {Rudin, Walter},
  LOCATION = {New York},
  PUBLISHER = {McGraw-Hill},
  DATE = {1976},
  EDITION = {3},
  PAGES = {x+342},
  SERIES = {Internat. Ser. Pure Appl. Math.},
  TITLE = {Principles of mathematical analysis},
}

@ARTICLE{Sch13,
  AUTHOR = {Schaeffer, Nathanaël},
  DATE = {2013},
  DOI = {10.1002/ggge.20071},
  JOURNALTITLE = {Geochemistry, Geophys. Geosystems},
  NUMBER = {3},
  PAGES = {751--758},
  TITLE = {Efficient spherical harmonic transforms aimed at pseudospectral numerical simulations},
  VOLUME = {14},
}

@ARTICLE{tornheim,
  AUTHOR = {Tornheim, L.},
  DATE = {1950},
  DOI = {10.2307/2372034},
  JOURNALTITLE = {Amer. J. Math.},
  NUMBER = {2},
  PAGES = {303--314},
  TITLE = {Harmonic Double Series},
  VOLUME = {72},
}

@ARTICLE{townsend,
  AUTHOR = {Townsend, A. and Wilber, H. and Wright, G. B.},
  DATE = {2016},
  DOI = {10.1137/15M1045855},
  JOURNALTITLE = {SIAM J. Sci. Comput.},
  NUMBER = {4},
  PAGES = {C403--C425},
  TITLE = {Computing with functions in spherical and polar geometries I. The sphere},
  VOLUME = {38},
}

@BOOK{triebel,
  AUTHOR = {Triebel, H.},
  LOCATION = {Basel},
  PUBLISHER = {Birkhäuser},
  DATE = {1992},
  DOI = {10.1007/978-3-0346-0419-2},
  SERIES = {Monogr. Math.},
  TITLE = {Theory of Function Spaces II},
  VOLUME = {84},
}

@ARTICLE{WedHamMoz13,
  AUTHOR = {Wedi, Nils P. and Hamrud, Mats and Mozdzynski, George},
  DATE = {2013},
  DOI = {10.1175/MWR-D-13-00016.1},
  ISSUE = {10},
  JOURNALTITLE = {Mon. Weather Rev.},
  PAGES = {3450--3461},
  TITLE = {A Fast Spherical Harmonics Transform for Global NWP and Climate Models},
  VOLUME = {141},
}

@ARTICLE{WoDz84,
  AUTHOR = {Woodhouse, John H. and Dziewonski, Adam M.},
  DATE = {1984},
  DOI = {10.1029/JB089iB07p05953},
  ISSN = {2156-2202},
  JOURNALTITLE = {J. Geophys. Res. Solid Earth},
  KEYWORDS = {Seismology: Structure of the crust and upper mantle,Seismology: Surface waves,Tectonophysics: Convection currents},
  NUMBER = {B7},
  PAGES = {5953--5986},
  TITLE = {Mapping the upper mantle: Three-dimensional modeling of earth structure by inversion of seismic waveforms},
  VOLUME = {89},
}

@ARTICLE{yee,
  AUTHOR = {Yee, S. Y. K.},
  DATE = {1980},
  DOI = {10.1175/1520-0493(1980)108<0676:SOFSOS>2.0.CO;2},
  JOURNALTITLE = {Mon. Weather Rev.},
  NUMBER = {5},
  PAGES = {676--678},
  TITLE = {Studies on {F}ourier Series on Spheres},
  VOLUME = {108},
}

\end{document}